\documentclass[a4paper]{article}
\usepackage{mathrsfs, amsmath, amsxtra, amssymb, latexsym, amscd, amsthm}

\voffset =0in
\hoffset =0in
\parskip5pt
\setlength{\oddsidemargin}{0.378in}
\setlength{\evensidemargin}{0.378in}
\setlength{\textwidth}{15.5cm}
\textheight 22truecm
\voffset=-0.5in
\newtheorem{thm}{Theorem}[section]
\newtheorem{cor}{Corollary}[section]
\newtheorem{lem}{Lemma}[section]
\newtheorem{prop}{Proposition}[section]
\theoremstyle{definition}
\newtheorem{defn}{Definition}[section]
\newtheorem{exam}{Example}[section]

\theoremstyle{remark}
\newtheorem{rem}{Remark}[section]
\numberwithin{equation}{section}
\def\ind{{\rm 1\hspace{-0.90ex}1}}

\begin{document}

\title{Poisson and normal approximations for the measurable functions of independent random variables}
\author{Nguyen Tien Dung\footnote{Email: dung\_nguyentien10@yahoo.com}}

\date{\today}          
\maketitle
\begin{abstract}
In this paper we use a Malliavin-Stein type method to investigate Poisson and normal approximations for the measurable functions of infinitely many independent random variables. We combine Stein's method with the difference operators in theory of concentration inequalities to obtain explicit bounds on Wasserstein, Kolmogorov and total variation distances. When restricted to the functions of a finite number of independent random variables, our method provides new bounds in the normal approximation. Meanwhile, our bounds in Poisson approximation are first to obtain explicitly.
\end{abstract}
\noindent\emph{Keywords:} Stein's method, Difference operators, Wasserstein distance, Kolmogorov distance, Total variation distance.\\
{\em 2010 Mathematics Subject Classification:} 60F05, 60B10.

\section{Introduction}
Since the appearance of two seminal papers, Nualart \& Ortiz-Latorre \cite{NualartLatorre2008} and Nourdin \& Peccati \cite{Nourdin2009}, a new research line has been established. In this context, one combines Stein’s method with the Malliavin calculus to improve and refine many results in the normal approximation for functionals of Gaussian processes. Nowadays, this research line is the so-called Malliavin-Stein method and in the last decade, many important achievements have been obtained by various authors. For an overview, we refer the reader to the website $$\text{https://sites.google.com/site/malliavinstein/home.}$$
In particular, Malliavin-Stein method has been successfully used to study probability approximations for Rademacher functionals of the form
$$F(\varepsilon)=F(\varepsilon_1,\varepsilon_2,...),$$
where $\varepsilon:=(\varepsilon_1,\varepsilon_2,...)$ is an infinite sequence of independent Rademacher random variables.  More specifically, the normal approximation for $F(\varepsilon)$ has been investigated in \cite{Nourdin2010,Privault2015} and in recent papers \cite{Dobler2017,Krokowski2016,Zheng2016}. Poisson approximation for $F(\varepsilon)$ has also been investigated in \cite{Privault2015} and in \cite{Krokowski2017}. The error bounds in Poisson and normal approximations obtained in these papers are determined in terms of discrete Malliavin derivative operator $D$ (see \cite{Privault2008} for the original reference). The power of Malliavin-Stein method lies in the facts that it can handles the infinite sequences and provides the explicit bounds.

Let $X=(X_1,X_2,...)$ be a sequence of independent random variables (not necessarily identically distributed). We consider the problem of probability approximations for functionals of the form
\begin{equation}\label{11lqo}
F:=F(X)=F(X_1,X_2,...).
\end{equation}
When the functional depends only on the first $n$ coordinates, the normal approximation for $F(X_1,X_2,...,X_n)$ has been studied by Chatterjee \cite{Chatterjee2008} and L-Rey \& Peccati \cite{Rey2017}. However, it is still an open problem for the case of infinite sequences.


The aim of this paper is to introduce a new difference operator standing for the Malliavin derivative operator $D$ so that we can develop a Malliavin-Stein type method to study probability approximations for (\ref{11lqo}). The idea behind our work comes from two well-known results in the literature: the first one is due to  Chatterjee's work \cite{Chatterjee2008} and the second one is Efron-Stein inequality stated in Theorem 3.1 of \cite{Boucheron2013}.

For the measurable functions $f(X)=f(X_1,X_2,...,X_n)$ of $n$ arbitrary independent random variables ($n<\infty$), Chatterjee introduced in \cite{Chatterjee2008} a new method of normal approximation to obtain the explicit bounds on Wasserstein distance. His method can be summarized as follows: Let $X'=(X'_1,X'_2,...,X'_n)$ be an independent copy of $X=(X_1,X_2,...,X_n).$ For each $A\subseteq [n]=\{1,2,...,n\},$ define the random vector $X^A$ as
$$X^A_i=\left\{
          \begin{array}{ll}
            X'_i, & \hbox{if}\,\,\, i\in A,\\
            X_i, & \hbox{if}\,\,\, i\notin A.
          \end{array}
        \right.
$$
For each $j\in [n],$ we write $X^j$ instead of $X^{\{j\}}$ and define the difference operator $\Delta_j$ by
\begin{equation}\label{p95j4}
\Delta_jf(X):=f(X)-f(X^j).
\end{equation}
Next, for each $A\subseteq [n]=\{1,2,...,n\},$ let
$$T_A:=\sum\limits_{j\notin A}\Delta_jf(X)\Delta_jf(X^A)\,\,\,\text{and}\,\,\,T:=\frac{1}{2}\sum\limits_{A\subsetneq [n]}\frac{T_A}{\binom{n}{|A|}(n-|A|)}.$$
The abstract result stated in Theorem 2.2 of \cite{Chatterjee2008} reads: Suppose that the random variable $f(X)$ has mean zero and variance $\sigma^2\in(0,\infty).$ Then, the Wasserstein distance between the law of $\sigma^{-1}f(X)$ and standard normal law $N$ satisfies
\begin{equation}\label{kodw3a}
d_W(\sigma^{-1}f(X),N)\leq \frac{\sqrt{Var(E[T|X])}}{\sigma^2}+\frac{1}{2\sigma^3}\sum\limits_{j=1}^nE|\Delta_jf(X)|^3.
\end{equation}


Let us now recall the Efron-Stein inequality stated in Theorem 3.1 of \cite{Boucheron2013}: Suppose that $f(X)$ is a square-integrable random variable, then
$$Var(f(X))\leq E\left[\sum\limits_{j=1}^n (f(X)-E_j[f(X)])^2\right]=\frac{1}{2}E\left[\sum\limits_{j=1}^n (f(X)-f(X^j))^2\right],$$
where $E_j$ denotes the expectation with respect to $X_j.$

Because of the appearance of the factor $\binom{n}{|A|}(n-|A|)$ in the definition of $T,$ Chatterjee's method can not be extended to the functionals of infinitely many independent random variables of the form (\ref{11lqo}). However, we observe that the difference $f(X)-f(X^j)$ was used by Chatterjee to define the operator $\Delta_j.$ Hence, we wonder that if we can use $f(X)-E_j[f(X)]$ to define a new operator, namely $\mathfrak{D}_j,$ and combine this operator with Stein's method to investigate the normal approximation. Fortunately,  the answer is affirmative.

Our Theorem \ref{ko67d3} below provides the following bound on Wasserstein distance between the law of $\sigma^{-1}f(X)$ and standard normal law $N:$
\begin{equation}\label{kodw3b}
d_W(\sigma^{-1}f(X),N)\leq \frac{\sqrt{Var(Z)}}{\sigma^2}+\frac{2}{\sigma^3}\sum\limits_{j=1}^nE|\mathfrak{D}_jf(X)|^3,
\end{equation}
where $Z=\sum\limits_{j=1}^n  \mathfrak{D}_jf(X) E[\mathfrak{D}_jf(X)|\mathcal{F}_j]$ and $\mathcal{F}_j=\sigma(X_k,k\leq j).$ Since we use the same techniques of Stein's method, our bound (\ref{kodw3b}) is similar to (\ref{kodw3a}) with $Z$ and $\mathfrak{D}_j$ play the role of $E[T|X]$ and $\Delta_j,$ respectively. At the moment, we do not know which of the bounds (\ref{kodw3a}) and (\ref{kodw3b}) is easier to use in practice. But, at least, our bound (\ref{kodw3b}) provides a new way to prove central limit theorems. Another interesting feature of the operator $\mathfrak{D}_j$ is that it can handle the functionals of infinitely many independent random variables (\ref{11lqo}). Those two observations encourage us to write the present paper.


Developing further our work, we find out that the operator $\mathfrak{D}_j$ can also be used to obtain the explicit bounds in Poisson approximation. In the context of the functions of independent random variables, to the best of our knowledge, such explicit bounds are first to obtain.

The rest of this article is organized as follows. In Section 2, we recall the definition of two certain difference operators in theory of concentration inequalities and construct a new covariance formula. We also introduce in this section the concept of generalized Lyapunov ratios which will be used to represent our bounds.

In Section 3, we obtain the explicit bounds on Wasserstein and Kolmogorov distances in the normal approximation for the functionals (\ref{11lqo}). Our abstract findings are formulated in Theorems \ref{ko67d3} and \ref{mld2sk}. In Theorem \ref{luongmoi} we provide a slight generalization of classical results to infinite sums. The bounds on Kolmogorov distance, which are more convenient to use in practice, are provided in Corollaries \ref{oold1}, \ref{oold1b} and \ref{o7old1q}. In this section, we also show that our abstract bounds are pretty easy to apply to the sums of locally dependent random variables.

Section 4 is devoted to Poisson approximation in Wasserstein and total variation distances. The explicit bounds on these distances are stated and proved in Theorems \ref{prove01} and \ref{prove02}. 


\section{Covariance formula based on difference operators}
Let $\mathcal{X}$ be a measurable space and $X=(X_1,X_2,...)$ be a sequence of independent random variables, defined on some probability space $(\Omega,\mathfrak{F},P)$ and taking values in $\mathcal{X}.$ For each $\mathbb{R}$-valued measurable function $F,$ we consider the random variable $F:=F(X).$ Let $X'=(X'_1,X'_2,...)$ be an independent copy of $X.$ We write $T_iF=F(X_1,...,X_{i-1},X'_i,X_{i+1},...),i\geq 1$ and denote by $E_i,E'_i$ the expectations with respect to $X_i$ and $X'_i,$ respectively.

We first recall the definition of two certain difference operators in theory of concentration inequalities. Here we follow the notations used in \cite{Bobkov2017}.
\begin{defn}\label{kod2ol} Given a random variable $F=F(X)\in L^1(P),$ we define the difference operators $\mathfrak{D}_i$ by
$$\mathfrak{D}_iF=F-E_i[F],\,\,i\geq 1.$$
When $F\in L^2(P),$ we define the difference operators $\mathfrak{d}_i$ by
$$\mathfrak{d}_iF=\big(\frac{1}{2}E'_i|F-T_iF|^2\big)^{\frac{1}{2}},\,\,i\geq 1.$$
\end{defn}
Let us now prepare some useful properties of the operators $\mathfrak{D}_i$ and $\mathfrak{d}_i.$ We introduce the $\sigma$-fields
\begin{align*}
&\mathcal{F}_0:=\{\emptyset,\Omega\}\,\,\,\text{and}\,\,\,\mathcal{F}_i:=\sigma(X_k,k\leq i),\,\,i\geq 1.
\end{align*}
\begin{prop}\label{mcsk4} For each $i\geq 1,$ under suitable integrability assumptions, we have

(i) $E[\mathfrak{D}_iF]=0,$ 


(ii) $\mathfrak{D}_iE[F|\mathcal{F}_i]=E[F|\mathcal{F}_i]-E[F|\mathcal{F}_{i-1}]=E[\mathfrak{D}_iF|\mathcal{F}_i],$

(iii) $E\left[(\mathfrak{D}_iF)G\right]=E\left[(\mathfrak{D}_iG)F\right]=E\left[(\mathfrak{D}_iF)(\mathfrak{D}_iG)\right],$

(iv) $(\mathfrak{d}_iF)^2=\frac{1}{2}[(\mathfrak{D}_iF)^2+E_i(\mathfrak{D}_iF)^2],$

(v) $\mathfrak{D}_i(FG)=F\mathfrak{D}_iG+G\mathfrak{D}_iF-\mathfrak{D}_iF\mathfrak{D}_iG-E_i[\mathfrak{D}_iF\mathfrak{D}_iG],$

(vi) $E|\mathfrak{D}_iF|^p\leq 2^p E|F|^p,\,\,\forall\,p\geq 1.$
\end{prop}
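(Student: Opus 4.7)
The plan is to verify each of the six identities by unwinding the definitions of $\mathfrak{D}_i$ and $\mathfrak{d}_i$ and invoking three elementary tools: the tower property of conditional expectations, the independence of $X_i$ from the remaining coordinates $(X_k)_{k\neq i}$, and Minkowski/Jensen inequalities for part (vi). No new machinery beyond what is already implicit in the definitions should be needed.

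Parts (i), (ii), (iii), (vi) I expect to be bookkeeping. For (i), the tower property gives $E[E_i[F]] = E[F]$ and the claim is immediate. For (ii), I would show that both $E_i[E[F|\mathcal F_i]]$ and $E[E_i[F]|\mathcal F_i]$ equal $E[F|\mathcal F_{i-1}]$: the first by integrating $X_i$ out of $E[F|X_1,\ldots,X_i]$ via Fubini, and the second by using that $E_i[F]$ is measurable with respect to $\sigma((X_k)_{k\neq i})$ together with the independence of $X_i$ from $(X_k)_{k>i}$. For (iii), I would expand each of the three expressions and reduce them, via
\[
E\bigl[E_i[F]\,G\bigr] \;=\; E\bigl[E_i(E_i[F]\,G)\bigr] \;=\; E\bigl[E_i[F]\,E_i[G]\bigr],
\]
to the common value $E[FG] - E[E_i[F]\,E_i[G]]$. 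For (vi), Minkowski gives $\|\mathfrak{D}_iF\|_p \leq \|F\|_p + \|E_i[F]\|_p$ and Jensen gives $\|E_i[F]\|_p \leq \|F\|_p$, whence the bound $2^p E|F|^p$.

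For (v), I would substitute $F = \mathfrak{D}_iF + E_i[F]$ and $G = \mathfrak{D}_iG + E_i[G]$, multiply out $FG$, compute $E_i[FG]$ using $E_i[\mathfrak{D}_iF] = E_i[\mathfrak{D}_iG] = 0$, and regroup. The most delicate piece is (iv), which I would handle by the decomposition
\[
F - T_iF \;=\; \mathfrak{D}_iF \;-\; \bigl(T_iF - E_i[F]\bigr).
\]
Here $\mathfrak{D}_iF$ is $X'_i$-free, while $E_i[F] = E'_i[T_iF]$, so the second summand has mean zero under $E'_i$; consequently the cross term in $E'_i[(F-T_iF)^2]$ vanishes. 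The exchangeability of $X_i$ and $X'_i$ then identifies $E'_i[(T_iF - E_i[F])^2]$ with $E_i[(\mathfrak{D}_iF)^2]$, and dividing by $2$ yields the claim. The main obstacle is precisely this last step: one must keep careful track of which factor depends on $X_i$ and which on $X'_i$ to justify both the vanishing of the cross term and the change-of-variables identification.
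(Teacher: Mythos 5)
Your proposal is correct throughout, and for parts (i), (ii), (iii), (iv) and (vi) it follows essentially the same route as the paper: (iii) via $E[E_i[F]G]=E[E_i[F]E_i[G]]$, (iv) via killing the cross term in $E'_i[(F-T_iF)^2]$ using $E'_i[T_iF]=E_i[F]$ and identifying $E'_i[(T_iF-E_iF)^2]$ with $E_i[(\mathfrak{D}_iF)^2]$, and (vi) differing only in using Minkowski plus Jensen where the paper uses the convexity bound $(a+b)^p\le 2^{p-1}(a^p+b^p)$ plus Jensen --- both give the stated constant $2^p$. The one genuine divergence is part (v). The paper proves it by the algebraic identity
$$FG-T_iFT_iG=F(G-T_iG)+G(F-T_iF)-(F-T_iF)(G-T_iG),$$
polarizes the last product as a difference of squares, applies $E'_i$ to recognize $\tfrac{1}{2}\big[(\mathfrak{d}_i(F+G))^2-(\mathfrak{d}_i(F-G))^2\big]$, and then invokes part (iv) to convert the $\mathfrak{d}_i$ expressions back into $\mathfrak{D}_i$ terms (the paper's citation of ``point $(v)$'' there is a typo for $(iv)$). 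Your route --- substituting $F=\mathfrak{D}_iF+E_i[F]$, $G=\mathfrak{D}_iG+E_i[G]$, expanding $FG$, and computing $E_i[FG]$ using $E_i[\mathfrak{D}_iF]=E_i[\mathfrak{D}_iG]=0$ --- is a direct verification: one finds $\mathfrak{D}_i(FG)=\mathfrak{D}_iF\,\mathfrak{D}_iG+E_i[F]\mathfrak{D}_iG+E_i[G]\mathfrak{D}_iF-E_i[\mathfrak{D}_iF\,\mathfrak{D}_iG]$, which matches the right-hand side after the same substitution. This is more elementary, avoids the detour through $\mathfrak{d}_i$ and the logical dependence of (v) on (iv), and works entirely under $E_i$ without ever introducing the resampled copy $T_iF$; the paper's polarization argument, on the other hand, makes the link between $\mathfrak{D}_i$ and $\mathfrak{d}_i$ explicit, which is reused later (e.g.\ in bounding $R^{(2)}_{i,f_x}$ in the Kolmogorov section). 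Either proof is acceptable.
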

\begin{proof} The point $(i)$ follows directly from the definition of $\mathfrak{D}_i.$

\noindent$(ii)$ By the independence, we have $E_i[F]=E[F|\sigma(X_k,k\neq i)].$ Hence, we obtain
$$\mathfrak{D}_iE[F|\mathcal{F}_i]=E[F|\mathcal{F}_i]-E[E[F|\mathcal{F}_i]|\sigma(X_k,k\neq i)]=E[F|\mathcal{F}_i]-E[F|\mathcal{F}_{i-1}]$$
and
$$E[\mathfrak{D}_iF|\mathcal{F}_i]=E[F|\mathcal{F}_i]-E[E[F|\sigma(X_k,k\neq i)]|\mathcal{F}_i]=E[F|\mathcal{F}_i]-E[F|\mathcal{F}_{i-1}].$$
\noindent$(iii)$ This point follows from the relation
$$E[E_i[F]E_i[G]]=E[FE_i[G]]=E[E_i[F]G].$$
\noindent$(iv)$ Because $E_i[F]=E'_i[T_iF],$ we have
\begin{align*}
E'_i[(E_iF-T_iF)^2]&=(E_iF)^2-2(E_iF)^2+E_i[F^2]\\
&=E_i[F^2]-(E_iF)^2\\
&=E_i[(F-E_iF)^2]\\
&=E_i(\mathfrak{D}_iF)^2.
\end{align*}
This, together with the decomposition
$(F-T_iF)^2=(F-E_iF)^2+2(F-E_iF)(E_iF-T_iF)+(E_iF-T_iF)^2,$ gives us
$$2(\mathfrak{d}_iF)^2=(\mathfrak{D}_iF)^2+E'_i[(E_iF-T_iF)^2]=(\mathfrak{D}_iF)^2+E_i(\mathfrak{D}_iF)^2.$$
\noindent$(v)$ We have
\begin{align*}
FG&-T_iFT_iG=F(G-T_iG)+G(F-T_iF)-(F-T_iF)(G-T_iG)\\
&=F(G-T_iG)+G(F-T_iF)-\frac{(F+G-T_iF-T_iG)^2-(F-G-T_iF+T_iG)^2}{4}.
\end{align*}
Hence, we obtain
\begin{align*}
\mathfrak{D}_i(FG)=F\mathfrak{D}_iG+G\mathfrak{D}_iF-\frac{(\mathfrak{d}_i(F+G))^2-(\mathfrak{d}_i(F-G))^2}{2}.
\end{align*}
So we can finish the proof by using the point $(v).$

\noindent$(vi)$ By using the fundamental inequality $(a+b)^p\leq 2^{p-1}(a^p+b^p)$ we obtain
$$E|\mathfrak{D}_iF|^p\leq 2^{p-1} (E|F|^p+E|E_i[F]|^p)\leq 2^{p}E|F|^p,\,\,p\geq 1.$$
The proof of Proposition is complete.
\end{proof}
\begin{thm}\label{lods3} Let $F=F(X)$ and $G=G(X)$ be two random variables in $L^2(P),$ we have
\begin{equation}\label{damnit}
Cov(F,G)=E\left[\sum\limits_{i=1}^\infty \mathfrak{D}_i F E[\mathfrak{D}_iG|\mathcal{F}_i]\right].
\end{equation}
\end{thm}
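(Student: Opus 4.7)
The plan is to reduce the formula to Doob's martingale decomposition along the filtration $(\mathcal{F}_i)_{i\ge 0}$. Since $F\in L^2(P)$ is measurable with respect to $\sigma(X_1,X_2,\ldots)=\bigvee_i\mathcal{F}_i$, the martingale $M_i^F:=E[F|\mathcal{F}_i]$ converges to $F$ in $L^2$, so one has the orthogonal decomposition
\begin{equation*}
F-E[F]=\sum_{i=1}^\infty\bigl(E[F|\mathcal{F}_i]-E[F|\mathcal{F}_{i-1}]\bigr),
\end{equation*}
convergent in $L^2$, and analogously for $G$. By part $(ii)$ of Proposition \ref{mcsk4}, each increment equals $E[\mathfrak{D}_iF|\mathcal{F}_i]$, so the decomposition can be rewritten as $F-E[F]=\sum_{i=1}^\infty E[\mathfrak{D}_iF|\mathcal{F}_i]$, and similarly for $G$.

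Next, I would expand
\begin{equation*}
Cov(F,G)=E\Bigl[(F-E[F])(G-E[G])\Bigr]=\sum_{i,j\ge 1}E\bigl[E[\mathfrak{D}_iF|\mathcal{F}_i]\,E[\mathfrak{D}_jG|\mathcal{F}_j]\bigr],
\end{equation*}
where the interchange of sum and expectation is justified by the $L^2$-convergence of the two Doob series together with the Cauchy--Schwarz inequality. The key orthogonality step is the observation that for every $j\ge 1$, by the independence of $X_j$ from $\mathcal{F}_{j-1}$,
\begin{equation*}
E[\mathfrak{D}_jG|\mathcal{F}_{j-1}]=E[G|\mathcal{F}_{j-1}]-E[E_j[G]|\mathcal{F}_{j-1}]=0,
\end{equation*}
because $E_j[G]=E[G|\sigma(X_k,k\neq j)]$ has the same $\mathcal{F}_{j-1}$-conditional expectation as $G$ itself (tower property). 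Hence for $i<j$, conditioning on $\mathcal{F}_{j-1}$ annihilates the cross term, and by symmetry the same holds for $i>j$.

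Only the diagonal $i=j$ survives, giving
\begin{equation*}
Cov(F,G)=\sum_{i=1}^\infty E\bigl[E[\mathfrak{D}_iF|\mathcal{F}_i]\,E[\mathfrak{D}_iG|\mathcal{F}_i]\bigr]=\sum_{i=1}^\infty E\bigl[\mathfrak{D}_iF\,E[\mathfrak{D}_iG|\mathcal{F}_i]\bigr],
\end{equation*}
where in the last equality I pull $\mathfrak{D}_iF$ out of the inner conditional expectation, using that $E[\mathfrak{D}_iG|\mathcal{F}_i]$ is $\mathcal{F}_i$-measurable so $E[\mathfrak{D}_iF\cdot E[\mathfrak{D}_iG|\mathcal{F}_i]]=E[E[\mathfrak{D}_iF|\mathcal{F}_i]\cdot E[\mathfrak{D}_iG|\mathcal{F}_i]]$. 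A final Fubini/Tonelli brings the summation inside the outer expectation to obtain the desired identity.

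The only genuine obstacle I see is the justification of swapping $\sum_i$ with $E[\cdot]$ in the infinite-sequence setting; this is handled by the Pythagorean identity $\sum_i\|E[\mathfrak{D}_iF|\mathcal{F}_i]\|_2^2=\|F-E[F]\|_2^2<\infty$ (and the analogue for $G$), which together with Cauchy--Schwarz gives absolute summability of $\sum_i E|\mathfrak{D}_iF\,E[\mathfrak{D}_iG|\mathcal{F}_i]|$ after invoking property $(iii)$ of Proposition \ref{mcsk4} to rewrite $E[\mathfrak{D}_iF\,E[\mathfrak{D}_iG|\mathcal{F}_i]]=E[E[\mathfrak{D}_iF|\mathcal{F}_i]E[\mathfrak{D}_iG|\mathcal{F}_i]]$. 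Everything else is essentially bookkeeping on the filtration.
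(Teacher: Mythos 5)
Your argument is essentially the paper's: both rest on the Doob decomposition along $(\mathcal{F}_i)_{i\geq 0}$ and the identification of the martingale increments with $E[\mathfrak{D}_i\,\cdot\,|\mathcal{F}_i]$ from Proposition~\ref{mcsk4}~$(ii)$. The only real difference is that the paper decomposes $G$ alone and pairs the increments with $F$ through the self-adjointness property $(iii)$, namely $E[F\,\mathfrak{D}_iE[G|\mathcal{F}_i]]=E[\mathfrak{D}_iF\,\mathfrak{D}_iE[G|\mathcal{F}_i]]$, whereas you decompose both $F$ and $G$ and kill the off-diagonal terms by orthogonality of martingale increments; both reductions are valid and land on the same diagonal sum $\sum_i E\big[E[\mathfrak{D}_iF|\mathcal{F}_i]\,E[\mathfrak{D}_iG|\mathcal{F}_i]\big]$. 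One caveat: your justification of the very last step (pulling $\sum_i$ inside the expectation) overclaims. Property $(iii)$ rewrites $E[\mathfrak{D}_iF\,E[\mathfrak{D}_iG|\mathcal{F}_i]]$ \emph{without} absolute values; it does not let you bound $E\big|\mathfrak{D}_iF\,E[\mathfrak{D}_iG|\mathcal{F}_i]\big|$ by $\|E[\mathfrak{D}_iF|\mathcal{F}_i]\|_2\,\|E[\mathfrak{D}_iG|\mathcal{F}_i]\|_2$, since a conditional expectation cannot be moved inside $|\cdot|$. Cauchy--Schwarz only yields $\|\mathfrak{D}_iF\|_2\,\|E[\mathfrak{D}_iG|\mathcal{F}_i]\|_2$, and $\sum_i\|\mathfrak{D}_iF\|_2^2=\mathcal{L}_2(F)$ need not be finite for a general $F\in L^2(P)$ (Efron--Stein only gives $Var(F)\leq\mathcal{L}_2(F)$), so the absolute summability you invoke is not established. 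What your argument does prove cleanly is $Cov(F,G)=\sum_i E\big[\mathfrak{D}_iF\,E[\mathfrak{D}_iG|\mathcal{F}_i]\big]$ with the sum outside the expectation; the paper's own proof also stops there and restates the identity with the sum inside without further comment, so on this point you are at exactly the same level of rigor as the original.
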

\begin{proof}We have $(E[G|\mathcal{F}_n])_{n\geq 1}$ is a martingale satisfying the decomposition
\begin{align}
E[G|\mathcal{F}_n]-E[G|\mathcal{F}_0]&=\sum\limits_{i=1}^n(E[G|\mathcal{F}_i]-E[G|\mathcal{F}_{i-1}])\,\,\forall\,\,n\geq 1.\label{infty01}
\end{align}
Consider the random variable $U:=E[G|\mathcal{F}_n].$ Then, $U$ is a function of $n$ independent random variables $(X_1,...,X_n).$ It is known from the page 54, line 1 in \cite{Boucheron2013} that
$$Var(U)=\sum\limits_{i=1}^nE(E[U|\mathcal{F}_i]-E[U|\mathcal{F}_{i-1}])^2.$$
This, together with the fact that $E[U|\mathcal{F}_i]=E[G|\mathcal{F}_i],\,\,i\leq n,$ yields
$$\sum\limits_{i=1}^nE(E[G|\mathcal{F}_i]-E[G|\mathcal{F}_{i-1}])^2=Var(U)\leq Var(G)<\infty\,\,\forall\,\,n\geq 1.$$
So, the series $\sum\limits_{i=1}^\infty E(E[G|\mathcal{F}_i]-E[G|\mathcal{F}_{i-1}])^2$ is convergent. By martingale convergence theorem, the relation (\ref{infty01}) gives us
\begin{align*}
G-E[G]&=\sum\limits_{i=1}^\infty(E[G|\mathcal{F}_i]-E[G|\mathcal{F}_{i-1}])\\
&=\sum\limits_{i=1}^\infty \mathfrak{D}_iE[G|\mathcal{F}_i]\,\,\text{by Proposition \ref{mcsk4}, $(ii)$}.
\end{align*}
Hence, we can get
\begin{align*}
Cov(F,G)&=E[F(G-E[G])]\\
&=\sum\limits_{i=1}^\infty E\left[F\mathfrak{D}_iE[G|\mathcal{F}_i]\right]\\
&=\sum\limits_{i=1}^\infty E\left[\mathfrak{D}_iF\mathfrak{D}_iE[G|\mathcal{F}_i]\right]\,\,\text{by Proposition \ref{mcsk4}, $(iii)$}\\
&=\sum\limits_{i=1}^\infty E\left[\mathfrak{D}_iFE[\mathfrak{D}_iG|\mathcal{F}_i]\right]\,\,\text{by Proposition \ref{mcsk4}, $(ii)$}.
\end{align*}
The proof is complete.
\end{proof}
\begin{cor}\label{kkjj8}For any $F=F(X)\in L^2(P),$ we have
$$Var(F)=E\left[\sum\limits_{i=1}^\infty \mathfrak{D}_i F E[\mathfrak{D}_iF|\mathcal{F}_i]\right]=E\left[\sum\limits_{i=1}^\infty (E[\mathfrak{D}_iF|\mathcal{F}_i])^2\right].$$
\end{cor}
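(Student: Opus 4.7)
The plan is to derive both equalities directly from Theorem \ref{lods3} using only the tower property of conditional expectation.

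For the first equality, I would simply apply Theorem \ref{lods3} with $G := F$, noting that $Var(F) = Cov(F,F)$. This immediately produces
$$Var(F) = E\left[\sum_{i=1}^\infty \mathfrak{D}_i F \cdot E[\mathfrak{D}_i F|\mathcal{F}_i]\right].$$

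For the second equality, the key observation is that $E[\mathfrak{D}_iF|\mathcal{F}_i]$ is $\mathcal{F}_i$-measurable. For each fixed $i$, conditioning on $\mathcal{F}_i$ and pulling out the measurable factor gives
$$E\left[\mathfrak{D}_iF \cdot E[\mathfrak{D}_iF|\mathcal{F}_i]\right] = E\left[E[\mathfrak{D}_iF|\mathcal{F}_i] \cdot E[\mathfrak{D}_iF|\mathcal{F}_i]\right] = E\left[\bigl(E[\mathfrak{D}_iF|\mathcal{F}_i]\bigr)^2\right].$$
To obtain the claimed identity of expectations of infinite sums, I would then interchange sum and expectation on each side. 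On the right-hand side this is immediate by monotone convergence, since the summands $(E[\mathfrak{D}_iF|\mathcal{F}_i])^2$ are nonnegative. On the left-hand side, the interchange is already implicitly justified inside the proof of Theorem \ref{lods3}, since there it is shown that for $F \in L^2(P)$ the series $\sum_{i=1}^\infty E(E[F|\mathcal{F}_i] - E[F|\mathcal{F}_{i-1}])^2$ converges, and by Proposition \ref{mcsk4}(ii) this is exactly $\sum_{i=1}^\infty E[(E[\mathfrak{D}_iF|\mathcal{F}_i])^2]$.

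There is essentially no obstacle here beyond bookkeeping: once the first equality is obtained from Theorem \ref{lods3}, the second reduces to the tower property applied term by term, together with a standard monotone-convergence argument for the sum. In particular, the common value of both expressions equals the convergent series $\sum_{i=1}^\infty E[(E[\mathfrak{D}_iF|\mathcal{F}_i])^2]$, which is the classical $L^2$-martingale decomposition of $Var(F)$.
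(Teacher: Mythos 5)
Your proposal is correct and follows exactly the route the paper intends: the corollary is stated without proof as the immediate specialization $G=F$ of Theorem \ref{lods3}, and the second equality is precisely the tower-property computation $E[\mathfrak{D}_iF\,E[\mathfrak{D}_iF|\mathcal{F}_i]]=E[(E[\mathfrak{D}_iF|\mathcal{F}_i])^2]$ you give. Your added remarks on the convergence of $\sum_i E[(E[\mathfrak{D}_iF|\mathcal{F}_i])^2]$ and the interchange of sum and expectation are consistent with what is already established inside the proof of Theorem \ref{lods3}.
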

Assuming that the random variables $X_i,i\geq 1$ have the means $\mu_i=E[X_i]$ and finite variances $\sigma_i^2=E|X_i-\mu_i|^2,$ we consider the normalized partial sum
\begin{equation}\label{bbdsm2}
S_n=\Sigma^{-1/2}_n\sum\limits_{i=1}^n(X_i-\mu_i),
\end{equation}
where $\Sigma_n=\sum\limits_{i=1}^n\sigma_i^2$ and the Lyapunov ratio of order $r>0,$
$$L_r=\Sigma_n^{-r/2}\sum\limits_{i=1}^nE|X_i-\mu_i|^r.$$
The classical results (see, e.g. \cite{Chen2011}) tell us that ones can use the Lyapunov ratios to represent the bounds in the normal approximation for $S_n.$ For example, we have the following
\begin{equation}\label{yyh3}
d_W(S_n,N)\leq cL_3,\,\,\,\,d_K(S_n,N)\leq cL_3,
\end{equation}
where $d_W,d_K$ denote the Wasserstein and Kolmogorov distances and $c$ is an absolute constant. Naturally, one would like to obtain such familiar representations for the general functionals (\ref{11lqo}). For this purpose, let us introduce the following concept.

\begin{defn}Given a random variable $F=F(X),$ we define its generalized Lyapunov ratio of order $r>0$ by
$$\mathcal{L}_r(F):=\sum\limits_{i=1}^\infty E|\mathfrak{D}_iF|^r.$$
\end{defn}
When $F=S_n,$ $\mathcal{L}_r$ reduces to standard Lyapunov ratio. Indeed, we have $\mathfrak{D}_iS_n=\frac{X_i-\mu_i}{\sqrt{\Sigma_n}},i=1,2,...,n$ and hence,
$$\mathcal{L}_r(S_n)=\Sigma_n^{-r/2}\sum\limits_{i=1}^nE|X_i-\mu_i|^r=L_r.$$
In particular, when $S_n$ is a sum of independent and identically distributed (i.i.d.) random variables, we have
$$L_r=\frac{E|X_1-\mu_1|^r}{(E|X_1-\mu_1|^2)^{r/2}}\frac{1}{n^{r/2-1}}=O(\frac{1}{n^{r/2-1}}),\,\,n\to\infty.$$
Because $E(E[\mathfrak{D}_iF|\mathcal{F}_i])^2\leq E|\mathfrak{D}_iF|^2$ we obtain from Corollary \ref{kkjj8} the following.
\begin{cor} (Efron-Stein inequality) For any $F=F(X)\in L^2(P),$ we have
\begin{equation}\label{sopq1}
Var(F)\leq \mathcal{L}_2(F)=\sum\limits_{i=1}^\infty E|\mathfrak{D}_iF|^2.
\end{equation}
\end{cor}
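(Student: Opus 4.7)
The plan is to deduce this directly from Corollary \ref{kkjj8}, which already gives the exact covariance identity
$$Var(F)=E\!\left[\sum_{i=1}^\infty \bigl(E[\mathfrak{D}_iF\mid\mathcal{F}_i]\bigr)^2\right].$$
Since the right-hand side of the desired inequality bounds each conditional square by the unconditional square, the entire content is a single application of the conditional Jensen inequality to the convex function $x\mapsto x^2$, namely
$$\bigl(E[\mathfrak{D}_iF\mid\mathcal{F}_i]\bigr)^2\le E\bigl[(\mathfrak{D}_iF)^2\bigm|\mathcal{F}_i\bigr].$$

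From there I would take expectations term by term: by the tower property,
$$E\bigl(E[\mathfrak{D}_iF\mid\mathcal{F}_i]\bigr)^2\le E\bigl[(\mathfrak{D}_iF)^2\bigr]=E|\mathfrak{D}_iF|^2.$$
Summing in $i$ and interchanging sum and expectation (legitimate because all summands are nonnegative, by Tonelli) produces
$$Var(F)=\sum_{i=1}^\infty E\bigl(E[\mathfrak{D}_iF\mid\mathcal{F}_i]\bigr)^2\le \sum_{i=1}^\infty E|\mathfrak{D}_iF|^2=\mathcal{L}_2(F),$$
which is exactly \eqref{sopq1}.

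There is no real obstacle here: the heavy lifting (the martingale decomposition and the covariance identity) was done in Theorem \ref{lods3} and Corollary \ref{kkjj8}, and conditional Jensen is routine. The only minor point worth remarking on is that the bound is finite and the interchange of sum and expectation is justified without extra assumptions because the summands $E(E[\mathfrak{D}_iF\mid\mathcal{F}_i])^2$ are already known to sum to $Var(F)<\infty$ by Corollary \ref{kkjj8}, so \eqref{sopq1} holds trivially (and is informative) whenever $F\in L^2(P)$.
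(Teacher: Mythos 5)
Your proposal is correct and follows exactly the paper's route: the paper also deduces (\ref{sopq1}) from Corollary \ref{kkjj8} via the single observation $E\bigl(E[\mathfrak{D}_iF|\mathcal{F}_i]\bigr)^2\leq E|\mathfrak{D}_iF|^2$, which is your conditional Jensen step. Your extra remarks on Tonelli and finiteness are sound but add nothing beyond what the paper leaves implicit.
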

It should be noted that  the Efron-Stein inequality is extremely useful for bounding the variances appearing in our bounds.


\section{Stein's method for normal approximation}
Before stating our main results, let us give here some remarks. We learn from the referee's reports on the previous version of this paper that the covariance formula (\ref{damnit}) was already obtained by Decreusefond \& Halconruy, see Theorem 3.6 in \cite{Decreusefond2017}. Although our proof is not the same as that of theirs, we would like to claim that our Theorem \ref{lods3} is not new anymore. In addition, we refer the reader to Section 5.2 in \cite{Decreusefond2017} for the normal approximation results obtained there.



\subsection{Wasserstein distance}
We first recall some fundamental results about Stein's method of the normal approximation. The Wasserstein distance between the law of $F$ and standard normal law $N$ is defined by
$$d_W(F,N):=\sup\limits_{|h(x)-h(y)|\leq |x-y|}|E[h(F)]-E[h(N)]|=\sup\limits_{h\in \mathcal{C}^1,\|h'\|_\infty\leq 1}|E[h(F)]-E[h(N)]|,$$
where $\|.\|_\infty$ denotes the supremum norm.

Given an absolutely continuous $h$ with bounded $h',$ we consider the Stein equation
\begin{equation}\label{fklls3}
f'(z)-zf(z)=h(z)-E[h(N)],\,\,z\in \mathbb{R}.
\end{equation}
It is known from Lemma 2.4 in \cite{Chen2011} that the equation (\ref{fklls3}) admits an unique solution, denoted by $f_h(z),$ and this solution satisfies
$$\|f_h\|_\infty\leq 2\|h'\|_\infty,\,\,\,\|f'_h\|_\infty\leq \sqrt{\frac{2}{\pi}}\|h'\|_\infty,\,\,\,\|f''_h\|_\infty\leq 2\|h'\|_\infty.$$
We now observe that
$$E[h(F)]-E[h(N)]=E[f'_h(F)]-E[Ff_h(F)].$$
Hence, the Wasserstein distance can be estimated as follows
\begin{equation}\label{nnsd7f}
d_W(F,N)\leq \sup\limits_{f\in \mathcal{F}_W}|E[f'(F)]-E[Ff(F)]|,
\end{equation}
where $\mathcal{F}_W$ is the class of differentiable functions $f$ satisfying $\|f'\|_\infty\leq \sqrt{\frac{2}{\pi}}$ and $\|f''\|_\infty\leq 2.$

In order to be able to combine the covariance formula obtained in Theorem \ref{lods3} with Stein's method, let us provide a chain rule for the difference operators $\mathfrak{D}_i.$
\begin{lem}\label{lods33} Let $f:\mathbb{R}\to \mathbb{R}$ be a differentiable function with bounded derivative such that $f'$ is Lipschitz continuous. For any $F=F(X)\in L^2(P),$ we have
\begin{align*}
\mathfrak{D}_i f(F)&=f'(F) \mathfrak{D}_iF+R_{i,f},\,\,i\geq 1,
\end{align*}
where the remainder term $R_{i,f}$ satisfies the bound
$$|R_{i,f}|\leq \|f''\|_\infty(\mathfrak{d}_iF)^2=\frac{\|f''\|_\infty}{2}[(\mathfrak{D}_iF)^2+E_i(\mathfrak{D}_iF)^2],\,\,i\geq 1.$$
\end{lem}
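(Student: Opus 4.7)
The plan is to prove the identity by a coupling-and-Taylor argument centered on the independent copy $X'$. The starting observation is that $E_i[f(F)]=E'_i[f(T_iF)]$: since $X_i$ and $X'_i$ have the same distribution and $T_iF$ is obtained from $F$ by swapping $X_i$ for $X'_i$, integrating out $X_i$ in $f(F)$ is equivalent to integrating out $X'_i$ in $f(T_iF)$. Because $f(F)$ does not depend on $X'_i$, I can therefore write
\[
\mathfrak{D}_if(F)=f(F)-E_i[f(F)]=E'_i\bigl[f(F)-f(T_iF)\bigr].
\]

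Next I would apply Taylor's theorem to the scalar function $f$ at the random point $F$ evaluated at $T_iF$: since $f'$ is Lipschitz with constant $\|f''\|_\infty$, I have
\[
f(F)-f(T_iF)=f'(F)(F-T_iF)+\rho_i,\qquad |\rho_i|\le \tfrac{\|f''\|_\infty}{2}(F-T_iF)^2.
\]
Taking $E'_i$ on both sides, $f'(F)$ pulls out (it is independent of $X'_i$), and $E'_i[T_iF]=E_i[F]$, so the linear part yields $f'(F)(F-E_i[F])=f'(F)\mathfrak{D}_iF$. Setting $R_{i,f}:=E'_i[\rho_i]$ gives the claimed decomposition $\mathfrak{D}_if(F)=f'(F)\mathfrak{D}_iF+R_{i,f}$.

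The remainder bound is then immediate: by Jensen applied to $|\cdot|$ under $E'_i$,
\[
|R_{i,f}|\le E'_i|\rho_i|\le \tfrac{\|f''\|_\infty}{2}E'_i\bigl[(F-T_iF)^2\bigr]=\|f''\|_\infty(\mathfrak{d}_iF)^2,
\]
directly from the definition of $\mathfrak{d}_i$, and the equivalent form $\frac{\|f''\|_\infty}{2}\bigl[(\mathfrak{D}_iF)^2+E_i(\mathfrak{D}_iF)^2\bigr]$ follows at once from Proposition \ref{mcsk4}(iv).

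I do not anticipate a serious obstacle: the whole statement is essentially a scalar Taylor expansion, and the only point requiring any care is the initial identity $E_i[f(F)]=E'_i[f(T_iF)]$, which rests on the independence of $X_i,X'_i$ and their equidistribution and justifies rewriting the difference operator as an $E'_i$-average of the telescoping difference $f(F)-f(T_iF)$. Mild measurability/integrability caveats (needed to invoke Fubini when swapping $E'_i$ with the absolute value in the remainder step) are absorbed into the \emph{suitable integrability assumptions} wording already used in Proposition \ref{mcsk4}.
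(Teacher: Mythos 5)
Your proof is correct and follows essentially the same route as the paper: both rewrite $\mathfrak{D}_if(F)$ as $E'_i[f(F)-f(T_iF)]$, apply the Taylor expansion $f(x)-f(y)=f'(x)(x-y)+R_f$ with $|R_f|\le\|f''\|_\infty(x-y)^2/2$, pull $f'(F)$ out of $E'_i$ to recover $f'(F)\mathfrak{D}_iF$, and bound the averaged remainder by $\|f''\|_\infty(\mathfrak{d}_iF)^2$ via the definition of $\mathfrak{d}_i$ and Proposition \ref{mcsk4}(iv). No substantive difference to report.
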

\begin{proof} By the Taylor expansion we have $f(x)-f(y)=f'(x)(x-y)+R_f,$ where the remainder term $R_f$ is bounded by $\|f''\|_\infty(x-y)^2/2$ for all $x,y\in \mathbb{R}.$

Hence, for each $i\geq 1,$ we have
\begin{align*}
\mathfrak{D}_i f(F)&=E'_i[f(F)-T_if(F)]\\
&=E'_i[f(F)-f(T_iF)]\\
&=E'_i[f'(F)(F-T_iF)]+R_{i,f}\\
&=f'(F) \mathfrak{D}_iF+R_{i,f},
\end{align*}
where
$$|R_{i,f}|\leq \frac{\|f''\|_\infty}{2}E'_i(F-T_iF)^2=\|f''\|_\infty(\mathfrak{d}_iF)^2=\frac{\|f''\|_\infty}{2}[(\mathfrak{D}_iF)^2+E_i(\mathfrak{D}_iF)^2].$$
The proof is complete.
\end{proof}
The next statement is the first main result of the present paper.
\begin{thm}\label{ko67d3}Let $F=F(X)$ be in $L^2(P)$ with mean zero, we have
\begin{align}
d_W(F,N)&\leq \sqrt{\frac{2}{\pi}}E\big|1-Z\big|+2E\left[\sum\limits_{i=1}^{\infty} (\mathfrak{d}_iF)^2\,|E[\mathfrak{D}_iF|\mathcal{F}_i]|\right]\label{lode4}\\
&\leq \sqrt{\frac{2}{\pi}}|1-E[F^2]|+\sqrt{\frac{2}{\pi}}\sqrt{Var(Z)}+2\mathcal{L}_3(F),\label{de2q1}
\end{align}
where $Z:=\sum\limits_{i=1}^{\infty}  \mathfrak{D}_iF E[\mathfrak{D}_iF|\mathcal{F}_i].$
\end{thm}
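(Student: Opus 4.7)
My plan is to apply the Stein bound (\ref{nnsd7f}) to an arbitrary $f\in\mathcal{F}_W$, and then rewrite $E[Ff(F)]$ via the covariance formula of Theorem \ref{lods3} and the chain rule of Lemma \ref{lods33}. Since $E[F]=0$ we have $E[Ff(F)] = \sum_{i=1}^\infty E[\mathfrak{D}_iF\cdot E[\mathfrak{D}_if(F)\mid\mathcal{F}_i]]$, and Lemma \ref{lods33} decomposes $\mathfrak{D}_if(F)=f'(F)\mathfrak{D}_iF+R_{i,f}$ with $|R_{i,f}|\leq\|f''\|_\infty(\mathfrak{d}_iF)^2$. Thus the proof reduces to identifying a main term and controlling a remainder.

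The main term should equal $E[f'(F)Z]$. I would extract it by a two-step conditioning on $\mathcal{F}_i$: first, since $E[f'(F)\mathfrak{D}_iF\mid\mathcal{F}_i]$ is $\mathcal{F}_i$-measurable, the tower property rewrites $E[\mathfrak{D}_iF\cdot E[f'(F)\mathfrak{D}_iF\mid\mathcal{F}_i]]$ as $E[E[\mathfrak{D}_iF\mid\mathcal{F}_i]\cdot E[f'(F)\mathfrak{D}_iF\mid\mathcal{F}_i]]$; second, $E[\mathfrak{D}_iF\mid\mathcal{F}_i]$ is itself $\mathcal{F}_i$-measurable, so it can be pulled back inside the remaining conditional expectation, giving $E[f'(F)\mathfrak{D}_iF\cdot E[\mathfrak{D}_iF\mid\mathcal{F}_i]]$. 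Summing in $i$ (with the interchange of sum and expectation justified by the $L^2$-convergence shown in the proof of Theorem \ref{lods3}) produces $E[f'(F)Z]$. A single use of the same tower trick, together with $|R_{i,f}|\leq\|f''\|_\infty(\mathfrak{d}_iF)^2$, bounds the remainder by $\|f''\|_\infty E[(\mathfrak{d}_iF)^2\,|E[\mathfrak{D}_iF\mid\mathcal{F}_i]|]$. Substituting $\|f'\|_\infty\leq\sqrt{2/\pi}$ and $\|f''\|_\infty\leq 2$ in the resulting identity
\[
E[f'(F)]-E[Ff(F)] = E[f'(F)(1-Z)] - \sum_{i=1}^\infty E\bigl[\mathfrak{D}_iF\cdot E[R_{i,f}\mid\mathcal{F}_i]\bigr]
\]
and taking the supremum over $f\in\mathcal{F}_W$ delivers (\ref{lode4}).

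To pass to (\ref{de2q1}), I split $E|1-Z|\leq|1-E[Z]|+\sqrt{\mathrm{Var}(Z)}$ and use Corollary \ref{kkjj8} to identify $E[Z]=\mathrm{Var}(F)=E[F^2]$. For the second summand in (\ref{lode4}), I expand $(\mathfrak{d}_iF)^2=\tfrac12[(\mathfrak{D}_iF)^2+E_i(\mathfrak{D}_iF)^2]$ by Proposition \ref{mcsk4}(iv). On the $(\mathfrak{D}_iF)^2$-piece, conditioning on $\mathcal{F}_i$ and bounding $|E[\mathfrak{D}_iF\mid\mathcal{F}_i]|\leq E[|\mathfrak{D}_iF|\mid\mathcal{F}_i]$ produces $E[(\mathfrak{D}_iF)^2\mid\mathcal{F}_i]\cdot E[|\mathfrak{D}_iF|\mid\mathcal{F}_i]$, which is at most $E[|\mathfrak{D}_iF|^3\mid\mathcal{F}_i]$ by conditional Jensen with exponents $3/2$ and $3$; on the $E_i(\mathfrak{D}_iF)^2$-piece, a direct H\"older inequality with the same exponents together with conditional Jensen on each factor again yields $E|\mathfrak{D}_iF|^3$. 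Summing produces the claimed $2\mathcal{L}_3(F)$.

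The main obstacle I expect is the careful bookkeeping in the double conditioning that extracts $E[f'(F)Z]$: since $f'(F)$ is not $\mathcal{F}_i$-measurable, it cannot be pulled through any expectation directly, and the success of the argument depends on the clean interplay between the covariance formula of Theorem \ref{lods3} and the chain rule of Lemma \ref{lods33}; once that manoeuvre is in place, both the remainder bound and the reduction to Lyapunov ratios follow from standard Jensen/H\"older estimates.
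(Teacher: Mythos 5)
Your proposal is correct and follows essentially the same route as the paper: Stein's bound (\ref{nnsd7f}), the covariance formula of Theorem \ref{lods3}, the chain rule of Lemma \ref{lods33} to split off the main term $E[f'(F)Z]$, and H\"older/Jensen with exponents $3/2$ and $3$ to reduce the remainder to $2\mathcal{L}_3(F)$. The only cosmetic difference is that the paper applies the (symmetric) covariance formula with $f(F)$ in the $\mathfrak{D}_i$-slot, i.e. $E[Ff(F)]=E\bigl[\sum_i \mathfrak{D}_i f(F)\, E[\mathfrak{D}_iF|\mathcal{F}_i]\bigr]$, which makes your two-step conditioning manoeuvre unnecessary.
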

\begin{proof}For any $f\in \mathcal{F}_W,$ we have $f(F)\in L^2(P)$ and $E[Ff(F)]=Cov(F,f(F))$ because $E[F]=0.$ Thanks to Theorem \ref{lods3} and Lemma \ref{lods33} we obtain
\begin{align*}
E[Ff(F)]&=E\left[\sum\limits_{i=1}^{\infty} \mathfrak{D}_i f(F) E[\mathfrak{D}_iF|\mathcal{F}_i]\right]\\
&=E\left[\sum\limits_{i=1}^{\infty} f'(F) \mathfrak{D}_iF E[\mathfrak{D}_iF|\mathcal{F}_i]\right]+E\left[\sum\limits_{i=1}^{\infty} R_{i,f} E[\mathfrak{D}_iF|\mathcal{F}_i]\right]\\
&=E[f'(F)Z]+E\left[\sum\limits_{i=1}^{\infty} R_{i,f} E[\mathfrak{D}_iF|\mathcal{F}_i]\right].
\end{align*}
As a consequence,
\begin{align*}
|E[&f'(F)]-E[Ff(F)]|\\
&\leq E|f'(F)-f'(F)Z|+E\left[\sum\limits_{i=1}^{\infty} |R_{i,f}| |E[\mathfrak{D}_iF|\mathcal{F}_i]|\right]\\
&\leq \|f'\|_\infty E|1-Z|+\|f''\|_\infty E\left[\sum\limits_{i=1}^{\infty} (\mathfrak{d}_iF)^2\,|E[\mathfrak{D}_iF|\mathcal{F}_i]|\right]\,\,\forall\,f\in \mathcal{F}_W
\end{align*}
and so (\ref{lode4}) follows.
By using the H\"older inequality we deduce
$$E[(\mathfrak{D}_iF)^2|E[\mathfrak{D}_iF|\mathcal{F}_i]|]\leq (E|\mathfrak{D}_iF|^3)^{\frac{2}{3}}(E|E[\mathfrak{D}_iF|\mathcal{F}_i]|^3)^{\frac{1}{3}}\leq E|\mathfrak{D}_iF|^3,$$
$$E[E_i(\mathfrak{D}_iF)^2|E[\mathfrak{D}_iF|\mathcal{F}_i]|]\leq (E[E_i|\mathfrak{D}_iF|^3])^{\frac{2}{3}}(E|E[\mathfrak{D}_iF|\mathcal{F}_i]|^3)^{\frac{1}{3}}\leq E|\mathfrak{D}_iF|^3.$$
Hence, it holds that
\begin{align}
2E[(\mathfrak{d}_iF)^2\,|E[\mathfrak{D}_iF|\mathcal{F}_i]|]\leq 2E|\mathfrak{D}_iF|^3,\,\,i\geq 1.\label{ode3wy}
\end{align}
On the other hand, by using Corollary \ref{kkjj8} and  the triangle inequality, we have
\begin{align}
E\big|1-Z\big|&\leq |1-E[F^2]|+E|Z-E[Z]|\nonumber\\
&\leq |1-E[F^2]|+\sqrt{Var(Z)}.\label{ml2s7}
\end{align}
Inserting the estimates (\ref{ode3wy}) and (\ref{ml2s7}) into (\ref{lode4}) gives us (\ref{de2q1}). The proof of Theorem is complete.
\end{proof}

\begin{exam}We consider the partial sum $S_n$ defined by (\ref{bbdsm2}). For the simplicity, we now assume that $\mu_i=0,i=1,2,...,n.$ Then, we have $\mathfrak{D}_iS_n=\frac{X_i}{\sqrt{\Sigma_n}},$ $E[\mathfrak{D}_iS_n|\mathcal{F}_i]=\frac{X_i}{\sqrt{\Sigma_n}}$ and
$$Z= \Sigma^{-1}_n\sum\limits_{i=1}^{n}X^2_i.$$
We also have $\mathfrak{D}_iZ= \frac{X^2_i-\sigma_i^2}{\Sigma_n},i\geq 1.$ Hence, if all $X_i$ have finite fourth moments, the estimate (\ref{de2q1}) and the Efron-Stein inequality (\ref{sopq1}) give us
$$
d_W(S_n,N)\leq \sqrt{\frac{2\sum\limits_{i=1}^{n}E|X^2_i-\sigma_i^2|^2}{\pi\Sigma_n^2}}+2L_3
=\sqrt{\frac{2\sum\limits_{i=1}^{n}(E|X_i|^4-\sigma_i^4)}{\pi\Sigma_n^2}}+2L_3\leq  \sqrt{\frac{2L_4}{\pi}}+2L_3.
$$
To compare with Chatterjee's method, we recall that the detailed computations from \cite{Chen2011} (pages 117-119) yield
$$d_W(S_n,N)\leq \sqrt{\frac{\sum\limits_{i=1}^{n}(E|X_i|^4+3\sigma_i^4)}{\pi\Sigma_n^2}}
+\frac{\sum\limits_{i=1}^{n}(E|X_i|^4+3\sigma_i^4)^{3/4}}{2^{1/4}\Sigma_n^{3/2}}.$$
The above bounds both give the optimal rate of convergence $\frac{1}{\sqrt{n}}$ when $S_n$ is a sum of i.i.d. random variables. However, they do not recover the classical bound (\ref{yyh3}). In the next theorem, we use another chain rule to generalize this classical bound to infinite sums.

\end{exam}
\begin{thm}\label{luongmoi}Let $X_1,X_2...$ be a sequence of independent random variables with means zero and finite variances $\sigma_i^2=E|X_i|^2.$ Suppose that $\Sigma_\infty:=\sum\limits_{i=1}^\infty\sigma_i^2\in (0,\infty).$
Consider  the normalized series
 $$S_\infty= \Sigma^{-1}_\infty\sum\limits_{i=1}^{\infty}X_i.$$
If all $X_i$ have finite third absolute moments, then
\begin{equation}\label{nnsd7f77}
d_W(S_\infty,N)\leq4 \mathcal{L}_3(S_{\infty})=4\Sigma_\infty^{-3/2}\sum\limits_{i=1}^\infty E|X_i|^3.
\end{equation}
\end{thm}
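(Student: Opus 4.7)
The plan is to run Stein's method via \eqref{nnsd7f} with a sharper chain rule tailored to the pure-sum structure of $S_\infty$: instead of Lemma \ref{lods33}, where the linear term is $f'(S_\infty)\,\mathfrak{D}_i S_\infty$, I would Taylor-expand around a leave-one-out variable that is independent of $X_i$, which converts the wasteful $(\mathfrak{D}_i S_\infty)^2$ contribution in the remainder into a genuine third-moment term.

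Concretely, writing $\mathfrak{D}_i S_\infty=X_i/\sqrt{\Sigma_\infty}$ (so that $E[\mathfrak{D}_i S_\infty\mid\mathcal{F}_i]=X_i/\sqrt{\Sigma_\infty}$), I would introduce the leave-one-out partial sum $W_i:=S_\infty-X_i/\sqrt{\Sigma_\infty}$. The a.s.\ convergence of $S_\infty$ and of $W_i$ is guaranteed by Kolmogorov's theorem because $\Sigma_\infty<\infty$, and $W_i$ is independent of $X_i$. The key step is the following refinement of Lemma \ref{lods33}: Taylor-expanding $f(S_\infty)=f(W_i+X_i/\sqrt{\Sigma_\infty})$ around $W_i$ and then taking $E_i$ while using $E_i[X_i]=0$ yields
\begin{equation*}
\mathfrak{D}_i f(S_\infty)=f'(W_i)\,\mathfrak{D}_i S_\infty + R'_{i,f},\qquad |R'_{i,f}|\leq \tfrac{1}{2}\|f''\|_\infty\,(X_i^2+\sigma_i^2)/\Sigma_\infty.
\end{equation*}
The essential gain is that the linear factor is $f'(W_i)$ rather than $f'(S_\infty)$, so the independence of $W_i$ and $X_i$ becomes exploitable.

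Plugging this into the covariance formula of Theorem \ref{lods3} (applied to $Cov(S_\infty,f(S_\infty))=E[S_\infty f(S_\infty)]$, legitimate since $E[S_\infty]=0$), independence collapses the main term:
\begin{equation*}
E\bigl[f'(W_i)\,\mathfrak{D}_i S_\infty\cdot E[\mathfrak{D}_i S_\infty\mid\mathcal{F}_i]\bigr]=\Sigma_\infty^{-1}E[f'(W_i)X_i^2]=\tfrac{\sigma_i^2}{\Sigma_\infty}\,E[f'(W_i)].
\end{equation*}
Since $\sum_i\sigma_i^2/\Sigma_\infty=1$ lets one write $E[f'(S_\infty)]=\sum_i(\sigma_i^2/\Sigma_\infty)E[f'(S_\infty)]$, this gives
\begin{equation*}
E[f'(S_\infty)]-E[S_\infty f(S_\infty)]=\sum_{i=1}^\infty\frac{\sigma_i^2}{\Sigma_\infty}E\bigl[f'(S_\infty)-f'(W_i)\bigr]-\sum_{i=1}^\infty\frac{E[R'_{i,f}X_i]}{\sqrt{\Sigma_\infty}}.
\end{equation*}
Bounding the first sum via $|f'(S_\infty)-f'(W_i)|\leq \|f''\|_\infty |X_i|/\sqrt{\Sigma_\infty}$, the second via the above estimate on $R'_{i,f}$, and both with the H\"older consequence $\sigma_i^2 E|X_i|\leq (E|X_i|^3)^{2/3}(E|X_i|^3)^{1/3}=E|X_i|^3$, each sum is seen to be at most $\|f''\|_\infty\mathcal{L}_3(S_\infty)\leq 2\mathcal{L}_3(S_\infty)$. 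Taking the supremum over $f\in\mathcal{F}_W$ in \eqref{nnsd7f} then delivers the claimed bound $4\mathcal{L}_3(S_\infty)$.

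The main obstacle I anticipate is identifying and justifying the sharper chain rule: Lemma \ref{lods33} as stated only produces a linear term in $f'(S_\infty)$, which prevents the independence trick and yields only a bound in $\sqrt{\mathcal{L}_4}$ as in the example preceding the theorem; the new expansion around $W_i$ is exactly what is needed to unlock the $\mathcal{L}_3$ rate hinted at by the authors. Secondary technical points are the a.s.\ convergence of the infinite sums (Kolmogorov's theorem) and the justification of interchanging summation and expectation in Theorem \ref{lods3}, both standard under $\Sigma_\infty<\infty$ and the third-moment hypothesis $\mathcal{L}_3(S_\infty)<\infty$.
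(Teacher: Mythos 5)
Your proposal is correct and is essentially the paper's own argument: your leave-one-out variable $W_i=S_\infty-X_i/\sqrt{\Sigma_\infty}$ is exactly the paper's $E_iS_\infty$, the refined chain rule with linear factor $f'(W_i)$ and remainder bounded by $\tfrac{1}{2}\|f''\|_\infty(X_i^2+\sigma_i^2)/\Sigma_\infty$ coincides with the paper's expansion, and the subsequent use of independence plus Lyapunov's inequality to get $2\mathcal{L}_3+2\mathcal{L}_3=4\mathcal{L}_3$ is identical.
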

\begin{proof} For each $f\in \mathcal{F}_W,$ we write
$$\mathfrak{D}_if(S_\infty)=E'_i[f(S_{\infty})-f(T_iS_{\infty})]=f(S_{\infty})-f(E_iS_{\infty})+E'_i[f(E_iS_{\infty})-f(T_iS_{\infty})].$$
By using the Taylor expansion as in the proof of Lemma \ref{lods33} we get
$$f(S_{\infty})-f(E_iS_{\infty})=f'(E_iS_{\infty})(S_{\infty}-E_iS_{\infty})+R^{\ast}_{i,f}$$
and
$$E'_i[f(E_iS_{\infty})-f(T_iS_{\infty})]=f'(E_iS_{\infty})E'_i[E_iS_{\infty}-T_iS_{\infty}]+R^{\ast\ast}_{i,f}=R^{\ast\ast}_{i,f},$$
where the remainders satisfy
$$|R^\ast_{i,f}|\leq \frac{\|f''\|_\infty}{2}(S_{\infty}-E_iS_{\infty})^2= \frac{\|f''\|_\infty}{2}(\mathfrak{D}_iS_{\infty})^2,$$
$$|R^{\ast\ast}_{i,f}|\leq \frac{\|f''\|_\infty}{2}E'_i(E_iS_{\infty}-T_iS_{\infty})^2=\frac{\|f''\|_\infty}{2}E_i(\mathfrak{D}_iS_{\infty})^2.$$
Thus we can write
$$\mathfrak{D}_if(S_{\infty})=f'(E_iS)\mathfrak{D}_iS_{\infty}+R_{i,f},\,\,i\geq 1$$
with the remainder $|R_{i,f}|\leq |R^\ast_{i,f}|+|R^{\ast\ast}_{i,f}|\leq \|f''\|_\infty(\mathfrak{d}_iS_{\infty})^2,\,\,i\geq 1.$

We have $\mathfrak{D}_iS_{\infty}=\frac{X_i}{\sqrt{\Sigma_\infty}},$ $E[\mathfrak{D}_iS_{\infty}|\mathcal{F}_i]=\frac{X_i}{\sqrt{\Sigma_\infty}}$ and $(\mathfrak{d}_iS_{\infty})^2=\frac{X_i^2+\sigma_i^2}{2\Sigma_\infty},\,\,i\geq 1.$ Hence, it follows from Theorem \ref{lods3} that
\begin{equation}\label{nnsd7f7}
E[f'(S_{\infty})]-E[S_{\infty}f(S_{\infty})]=E[f'(S_{\infty})]-E\left[\sum\limits_{i=1}^{\infty} f'(E_iS_{\infty}) \frac{X^2_i}{\Sigma_\infty}\right]-E\left[\sum\limits_{i=1}^{\infty} R_{i,f} \frac{X_i}{\sqrt{\Sigma_\infty}}\right].
\end{equation}
Because $E_iS_{\infty}$ and $X_i$ are independent, we obtain
\begin{align*}
E[f'(S_{\infty})]-E\left[\sum\limits_{i=1}^{\infty} f'(E_iS_{\infty}) \frac{X^2_i}{\Sigma_\infty}\right]&=E[f'(S_{\infty})]-\sum\limits_{i=1}^{\infty} E[f'(E_iS_{\infty})] \frac{\sigma^2_i}{\Sigma_\infty}\\
&=\sum\limits_{i=1}^{\infty} E[f'(S_{\infty})-f'(E_iS_{\infty})] \frac{\sigma^2_i}{\Sigma_\infty}
\end{align*}
and so
\begin{align*}
\big|E[f'(S_{\infty})]-E\left[\sum\limits_{i=1}^{\infty} f'(E_iS_{\infty}) \frac{X^2_i}{\Sigma_\infty}\right]\big|&\leq\|f''\|_\infty\sum\limits_{i=1}^{\infty} E|E_iS_{\infty}-S_{\infty}|\frac{\sigma^2_i}{\Sigma_\infty}\\
&=\|f''\|_\infty\sum\limits_{i=1}^{\infty} \frac{E|X_i|^2E|X_i|}{\Sigma_\infty^{3/2}}\\
&\leq 2\mathcal{L}_3(S_{\infty})\,\,\,\text{by Lyapunov's inequality}.
\end{align*}
We also have
\begin{align*}
\big|E\left[\sum\limits_{i=1}^{\infty} R_{i,f} \frac{X_i}{\sqrt{\Sigma_\infty}}\right]\big|&\leq \|f''\|_\infty\sum\limits_{i=1}^{\infty} \frac{ E|(\mathfrak{d}_iS_{\infty})^2X_i|}{\sqrt{\Sigma_\infty}}\\
&\leq \frac{\|f''\|_\infty}{2}\sum\limits_{i=1}^{\infty} \frac{ E|(X_i^2+\sigma_i^2)X_i|}{\Sigma_\infty^{3/2}}\\
&\leq 2\mathcal{L}_3(S_{\infty})\,\,\,\text{by Lyapunov's inequality}.
\end{align*}
Recalling (\ref{nnsd7f}) and (\ref{nnsd7f7}) we obtain (\ref{nnsd7f77}). This completes the proof.
\end{proof}

\subsection{Kolmogorov distance}
We recall that the Kolmogorov distance between the law of $F$ and standard normal law $N$ is defined by
$$d_{K}(F,N)=\sup\limits_{x\in \mathbb{R}}|P(F\leq x)-P(N\leq x)|.$$
Fixed $x\in \mathbb{R},$ we consider the Stein equation
\begin{equation}\label{stein21}
f'(z)-zf(z)=\ind_{\{z\leq x\}}-P(N\leq x),\,\,\,z\in \mathbb{R}.
\end{equation}
It is known from Lemma 2.3 in \cite{Chen2011} that the equation (\ref{stein21}) admits a unique solution $f_x(z)$ given by
$$f_x(z)=e^{\frac{z^2}{2}}\int_{-\infty}^ze^{-\frac{y^2}{2}}(\ind_{\{y\leq x\}}-P(N\leq x))dy,\,\,\,z\in \mathbb{R}.$$
Moreover, this solution satisfies
$$0<f_x(z)\leq \sqrt{2\pi}/4,\,\,\,|f'_x(z)|\leq 1\,\,\,\forall\,\,z\in \mathbb{R}$$
and
\begin{equation}\label{hklqt5}
|(w+u)f_x(w+u)-(w+v)f_x(w+v)|\leq (|w|+\sqrt{2\pi}/4)(|u|+|v|)
\end{equation}
for all $w,u,v\in \mathbb{R}.$

We have the following chain rule.
\begin{lem}\label{1lods3d3}Let $f_x$ be the solution of the Stein equation (\ref{stein21}). For any random variable $F=F(X)\in L^2(P),$ we have
\begin{align*}
\mathfrak{D}_i f_x(F)&=f'_x(F) \mathfrak{D}_iF+R^{(1)}_{i,f_x}+R^{(2)}_{i,f_x},\,\,i\geq 1,
\end{align*}
where the remainder terms $R^{(1)}_{i,f_x}$ and $R^{(2)}_{i,f_x}$ are bounded by
\begin{align*}
&|R^{(1)}_{i,f_x}|\leq(|F|+\frac{\sqrt{2\pi}}{4})(\mathfrak{d}_iF)^2,\\
&|R^{(2)}_{i,f_x}|\leq\mathfrak{D}_iF\mathfrak{D}_i\ind_{\{F> x\}}+E_i[\mathfrak{D}_iF\mathfrak{D}_i\ind_{\{F> x\}}].
\end{align*}
\end{lem}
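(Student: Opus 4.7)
The plan is to mimic the proof of Lemma \ref{lods33}, but correcting for the fact that $f'_x$ has a jump discontinuity at $x$ rather than being Lipschitz, so a straightforward second-order Taylor expansion of $f_x$ is not available. I would instead start from the absolute-continuity identity
$$f_x(F)-f_x(T_iF)=\int_{T_iF}^{F} f'_x(t)\,dt,$$
subtract $f'_x(F)(F-T_iF)$, and rewrite $f'_x(t)-f'_x(F)$ using the Stein equation (\ref{stein21}), which reads $f'_x(t)=tf_x(t)+\ind_{\{t\leq x\}}-P(N\leq x)$. The constant $P(N\leq x)$ cancels, leaving the exact decomposition
$$f_x(F)-f_x(T_iF)-f'_x(F)(F-T_iF)=\int_{T_iF}^F [tf_x(t)-Ff_x(F)]\,dt+\int_{T_iF}^F [\ind_{\{t\leq x\}}-\ind_{\{F\leq x\}}]\,dt.$$
Applying $E'_i$ to these two pieces will yield $R^{(1)}_{i,f_x}$ and $R^{(2)}_{i,f_x}$ respectively, together with the identity $E'_i[f'_x(F)(F-T_iF)]=f'_x(F)\mathfrak{D}_iF$.

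For the smooth piece $R^{(1)}_{i,f_x}$ I would invoke the estimate (\ref{hklqt5}) with $w=F$, $u=t-F$, $v=0$, giving $|tf_x(t)-Ff_x(F)|\le(|F|+\sqrt{2\pi}/4)|t-F|$. Integrating along the (possibly reversed) interval between $T_iF$ and $F$ produces the factor $(F-T_iF)^2/2$, and taking $E'_i$ converts this into $(|F|+\sqrt{2\pi}/4)(\mathfrak{d}_iF)^2$, since $(\mathfrak{d}_iF)^2=\tfrac12 E'_i[(F-T_iF)^2]$ by Definition \ref{kod2ol}.

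For the jump piece $R^{(2)}_{i,f_x}$ I would run a four-case analysis according to the positions of $F$ and $T_iF$ relative to $x$. When both lie on the same side of $x$ the integrand is constant and the integral vanishes; when they lie on opposite sides an explicit computation in each of the two remaining cases shows
$$\Bigl|\int_{T_iF}^F [\ind_{\{t\leq x\}}-\ind_{\{F\leq x\}}]\,dt\Bigr|\le (F-T_iF)(\ind_{\{F>x\}}-\ind_{\{T_iF>x\}}),$$
the right-hand side being nonnegative in every case so the bound holds uniformly. I would then take $E'_i$ and use the fundamental identity $E'_i[g(T_iF)]=E_i[g(F)]$ (valid for any $g$ not depending on $X'_i$) to expand the four resulting products; a term-by-term comparison with $\mathfrak{D}_iF=F-E_i[F]$ and $\mathfrak{D}_i\ind_{\{F>x\}}=\ind_{\{F>x\}}-E_i[\ind_{\{F>x\}}]$ shows the result coincides exactly with $\mathfrak{D}_iF\,\mathfrak{D}_i\ind_{\{F>x\}}+E_i[\mathfrak{D}_iF\,\mathfrak{D}_i\ind_{\{F>x\}}]$.

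The main obstacle is precisely the discontinuity of $f'_x$, which kills the clean Taylor trick used in Lemma \ref{lods33}. The device that bypasses it is eliminating $f'_x$ in favor of $f_x$ and the indicator through the Stein equation, thereby isolating the jump into a single indicator integral; after that the delicate work is the case analysis and its algebraic identification with the particular combination of $\mathfrak{D}_iF$ and $\mathfrak{D}_i\ind_{\{F>x\}}$ that appears in the stated bound.
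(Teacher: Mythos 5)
Your proposal is correct and follows essentially the same route as the paper: rewrite $f'_x(F+t)-f'_x(F)$ via the Stein equation to split the increment into a smooth part controlled by (\ref{hklqt5}) and a jump part given by an indicator integral, bound the latter by $(F-T_iF)(\ind_{\{F>x\}}-T_i\ind_{\{F>x\}})$ through a case analysis, and take $E'_i$. The only (harmless) deviation is at the final identification of $E'_i[(F-T_iF)(\ind_{\{F>x\}}-T_i\ind_{\{F>x\}})]$ with $\mathfrak{D}_iF\mathfrak{D}_i\ind_{\{F>x\}}+E_i[\mathfrak{D}_iF\mathfrak{D}_i\ind_{\{F>x\}}]$, which you do by direct term-by-term expansion using $E'_i[g(T_iF)]=E_i[g(F)]$, whereas the paper uses the polarization identity together with Proposition \ref{mcsk4}$(iv)$; both computations are routine and yield the same result.
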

\begin{proof}We have
\begin{align*}
f_x(F)-f_x(T_iF)&=\int_{T_iF-F}^0 f'_x(F+t)dt\\
&=f'_x(F)(F-T_iF)+\int_{T_iF-F}^0 [f'_x(F+t)-f'_x(F)]dt,\,\,i\geq 1.
\end{align*}
Since $f_x$ is the solution of the Stein equation (\ref{stein21}), we have
$$f'_x(F)=Ff_x(F)+\ind_{\{F\leq x\}}-P(N\leq x)$$
and
$$f'_x(F+t)=(F+t)f_x(F+t)+\ind_{\{F+t\leq x\}}-P(N\leq x)\,\,\forall\,t \in \mathbb{R}.$$
We therefore obtain
$$f'_x(F+t)-f'_x(F)=(F+t)f_x(F+t)-Ff_x(F)+\ind_{\{F+t\leq x\}}-\ind_{\{F\leq x\}}$$
and
\begin{align*}
f_x(F)-f_x(T_iF)=f'_x(F)(F-T_iF)+&\int_{T_iF-F}^0 [(F+t)f'_x(F+t)-Ff'_x(F)]dt\\
&+\int_{T_iF-F}^0(\ind_{\{F+t\leq x\}}-\ind_{\{F\leq x\}})dt,\,\,i\geq 1.
\end{align*}
Taking the expectation with respect to $X'_i$ yields
\begin{align*}
\mathfrak{D}_if_x(F)&=f'_x(F)\mathfrak{D}_iF+E'_i\left[\int_{T_iF-F}^0 [(F+t)f'_x(F+t)-Ff'_x(F)]dt\right]\\
&\hspace{3cm}+E'_i\left[\int_{T_iF-F}^0(\ind_{\{F+t\leq x\}}-\ind_{\{F\leq x\}})dt\right]\\
&=:f'_x(F)\mathfrak{D}_iF+R^{(1)}_{i,f_x}+R^{(2)}_{i,f_x},\,\,i\geq 1.
\end{align*}
Using (\ref{hklqt5}) we arrive at
\begin{align*}
|R^{(1)}_{i,f_x}|&\leq E'_i\big|\int_{T_iF-F}^0 [(F+t)f'_x(F+t)-Ff'_x(F)]dt\big|\\
&\leq E'_i\left[\int_{(T_iF-F)\wedge 0}^{(T_iF-F)\vee 0}(|F|+\frac{\sqrt{2\pi}}{4})|t|dt\right]\\
&=(|F|+\frac{\sqrt{2\pi}}{4})\frac{E'_i(F-T_iF)^2}{2}\\
&=(|F|+\frac{\sqrt{2\pi}}{4})(\mathfrak{d}_iF)^2,\,\,i\geq 1.
\end{align*}
In order to bound $R^{(2)}_{i,f_x},$ we observe that
$$0\leq \int_{T_iF-F}^0(\ind_{\{F+t\leq x\}}-\ind_{\{F\leq x\}})dt\leq (\ind_{\{F> x\}}-T_i\ind_{\{F> x\}})(F-T_iF).$$
Indeed, if $T_iF-F\leq 0$ then
\begin{align*}
0\leq \int_{T_iF-F}^0(\ind_{\{F+t\leq x\}}-\ind_{\{F\leq x\}})dt&\leq \int_{T_iF-F}^0(\ind_{\{F+T_iF-F\leq x\}}-\ind_{\{F\leq x\}})dt\\
&=(\ind_{\{T_iF\leq x\}}-\ind_{\{F\leq x\}})(F-T_iF)\\
&=(\ind_{\{F> x\}}-\ind_{\{T_iF> x\}})(F-T_iF)
\end{align*}
and if $T_iF-F> 0$ then
\begin{align*}
0\leq \int_{T_iF-F}^0(\ind_{\{F+t\leq x\}}-\ind_{\{F\leq x\}})dt&=\int^{T_iF-F}_0(\ind_{\{F\leq x\}}-\ind_{\{F+t\leq x\}})dt\\
&=\int^{T_iF-F}_0(\ind_{\{F+t>x\}}-\ind_{\{F> x\}})dt\\
&\leq \int^{T_iF-F}_0(\ind_{\{F+T_iF-F> x\}}-\ind_{\{F> x\}})dt\\
&=(\ind_{\{F> x\}}-\ind_{\{T_iF> x\}})(F-T_iF).
\end{align*}
Thus we obtain the following estimates
\begin{align*}
0\leq R^{(2)}_{i,f_x}&\leq E'_i[(\ind_{\{F> x\}}-T_i\ind_{\{F> x\}})(F-T_iF)]\\
&=\frac{E'_i[(F+\ind_{\{F> x\}}-T_iF-T_i\ind_{\{F> x\}})^2]-E'_i[(F-\ind_{\{F> x\}}-T_iF+T_i\ind_{\{F> x\}})^2]}{4}\\
&=\frac{(\mathfrak{d}_i(F+\ind_{\{F> x\}}))^2-(\mathfrak{d}_i(F-\ind_{\{F> x\}}))^2}{2}\\
&=\mathfrak{D}_iF\mathfrak{D}_i\ind_{\{F> x\}}+E_i[\mathfrak{D}_iF\mathfrak{D}_i\ind_{\{F> x\}}]\,\,\text{by Proposition \ref{mcsk4}, $(iv)$}.
\end{align*}
The proof of Lemma is complete.
\end{proof}
The next main result of the present paper is an explicit bound on Kolmogorov distance.
\begin{thm}\label{mld2sk}Let $F=F(X)$ be in $L^2(P)$ with mean zero, we have
\begin{align}
d_K(F,N)&\leq E|1-Z|+B_1+B_2\label{ffjw2}\\
&\leq |1-E[F^2]|+\sqrt{Var\left(Z\right)}+B_1+B_2,\nonumber
\end{align}
where $Z:=\sum\limits_{i=1}^{\infty}  \mathfrak{D}_iF E[\mathfrak{D}_iF|\mathcal{F}_i],$ and
$$B_1:=\sup\limits_{x\in \mathbb{R}}E\left[\sum\limits_{i=1}^{\infty} (\mathfrak{D}_iF\mathfrak{D}_i\ind_{\{F> x\}}+E_i[\mathfrak{D}_iF\mathfrak{D}_i\ind_{\{F> x\}}])|E[\mathfrak{D}_iF|\mathcal{F}_i]|\right],$$
$$B_2:=E\left[\sum\limits_{i=1}^{\infty} (|F|+\frac{\sqrt{2\pi}}{4}) (\mathfrak{d}_iF)^2\,|E[\mathfrak{D}_iF|\mathcal{F}_i]|\right].$$
\end{thm}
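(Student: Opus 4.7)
The strategy is to parallel the proof of Theorem \ref{ko67d3}, replacing the Wasserstein Stein equation by the Kolmogorov one (\ref{stein21}) and using Lemma \ref{1lods3d3} in place of Lemma \ref{lods33}. First I would fix $x \in \mathbb{R}$ and take $f_x$ to be the solution of (\ref{stein21}). Since $f_x$ is bounded and $F \in L^2(P)$ with $E[F]=0$, we have $f_x(F) \in L^2(P)$ and $E[Ff_x(F)] = \mathrm{Cov}(F, f_x(F))$. Theorem \ref{lods3} (using Proposition \ref{mcsk4}$(iii)$ to exchange the roles of $F$ and $f_x(F)$) gives
$$E[Ff_x(F)] = E\Big[\sum_{i=1}^\infty \mathfrak{D}_i f_x(F)\, E[\mathfrak{D}_i F | \mathcal{F}_i]\Big].$$
Substituting the chain rule decomposition $\mathfrak{D}_i f_x(F) = f_x'(F)\mathfrak{D}_iF + R^{(1)}_{i,f_x} + R^{(2)}_{i,f_x}$ from Lemma \ref{1lods3d3} and subtracting from $E[f_x'(F)]$, I would arrive at the identity
$$P(F\leq x) - P(N\leq x) = E[f_x'(F)(1-Z)] - E\Big[\sum_{i=1}^\infty R^{(1)}_{i,f_x}\, E[\mathfrak{D}_iF|\mathcal{F}_i]\Big] - E\Big[\sum_{i=1}^\infty R^{(2)}_{i,f_x}\, E[\mathfrak{D}_iF|\mathcal{F}_i]\Big].$$

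Then I would bound the three error terms separately. The first contributes $E|1-Z|$ thanks to $\|f_x'\|_\infty \leq 1$. The second, via the pointwise estimate $|R^{(1)}_{i,f_x}| \leq (|F| + \tfrac{\sqrt{2\pi}}{4})(\mathfrak{d}_iF)^2$ from Lemma \ref{1lods3d3}, is dominated by $B_2$ (which is independent of $x$). The third requires the key observation, also proved in Lemma \ref{1lods3d3}, that $R^{(2)}_{i,f_x} \geq 0$ almost surely; this allows one to write $|R^{(2)}_{i,f_x} E[\mathfrak{D}_iF|\mathcal{F}_i]| = R^{(2)}_{i,f_x}|E[\mathfrak{D}_iF|\mathcal{F}_i]|$ and then apply the pointwise upper bound $R^{(2)}_{i,f_x} \leq \mathfrak{D}_iF\mathfrak{D}_i\ind_{\{F>x\}} + E_i[\mathfrak{D}_iF\mathfrak{D}_i\ind_{\{F>x\}}]$, so that after taking supremum over $x$ the resulting contribution is at most $B_1$. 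Summing the three bounds and passing to the supremum over $x$ on the left yields the first inequality.

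The second inequality follows immediately from the first by the triangle inequality: Corollary \ref{kkjj8} gives $E[Z] = E[F^2]$, hence
$$E|1-Z| \leq |1-E[F^2]| + E|Z-E[Z]| \leq |1-E[F^2]| + \sqrt{\mathrm{Var}(Z)}$$
by Jensen's inequality applied to $E|Z-E[Z]|$.

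The step I expect to be the most delicate is handling the $R^{(2)}$ term. One must genuinely exploit the sign information $R^{(2)}_{i,f_x} \geq 0$ produced inside Lemma \ref{1lods3d3}, because the pointwise upper bound on $R^{(2)}_{i,f_x}$ is itself signed: without first extracting the absolute value of the conditional expectation $E[\mathfrak{D}_iF|\mathcal{F}_i]$ using the nonnegativity of $R^{(2)}_{i,f_x}$, the indicator-containing expression would not cleanly collapse into the quantity defining $B_1$, and one could not legitimately dominate the signed error by the supremum taken in the definition of $B_1$.
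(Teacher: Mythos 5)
Your proposal is correct and follows essentially the same route as the paper: apply the covariance formula of Theorem \ref{lods3} to $E[Ff_x(F)]$, substitute the chain rule of Lemma \ref{1lods3d3}, bound the three resulting terms by $E|1-Z|$ (via $\|f_x'\|_\infty\leq 1$), $B_2$ and $B_1$ respectively, and pass to the supremum over $x$; the second inequality is obtained exactly as in (\ref{ml2s7}) using Corollary \ref{kkjj8}. Your remark about exploiting the sign $R^{(2)}_{i,f_x}\geq 0$ is the content already packaged into the statement $|R^{(2)}_{i,f_x}|\leq \mathfrak{D}_iF\mathfrak{D}_i\ind_{\{F>x\}}+E_i[\mathfrak{D}_iF\mathfrak{D}_i\ind_{\{F>x\}}]$ of Lemma \ref{1lods3d3}, so at the level of this theorem the argument is the same as the paper's.
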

\begin{proof}Let $f_x$ be the solution of the Stein equation (\ref{stein21}). Thanks to Theorem \ref{lods3} and Lemma \ref{1lods3d3} we obtain
\begin{align*}
E&[Ff_x(F)]=E\left[\sum\limits_{i=1}^{\infty} \mathfrak{D}_i f_x(F) E[\mathfrak{D}_iF|\mathcal{F}_i]\right]\\
&=E[f'_x(F) Z]+E\left[\sum\limits_{i=1}^{\infty} R^{(1)}_{i,f_x} E[\mathfrak{D}_iF|\mathcal{F}_i]\right]+E\left[\sum\limits_{i=1}^{\infty} R^{(2)}_{i,f_x} E[\mathfrak{D}_iF|\mathcal{F}_i]\right].
\end{align*}
Hence, for all $x\in \mathbb{R},$
\begin{align*}
|P(F\leq x)-P(N\leq x)|&=|E[f'_x(F)]-E[Ff_x(F)]|\\
&\leq \|f'_x\|_\infty E|1-Z|+E\left[\sum\limits_{i=1}^{\infty} (|F|+\frac{\sqrt{2\pi}}{4})(\mathfrak{d}_iF)^2\,|E[\mathfrak{D}_iF|\mathcal{F}_i]|\right]\\
&+E\left[\sum\limits_{i=1}^{\infty} (\mathfrak{D}_iF\mathfrak{D}_i\ind_{\{F> x\}}+E_i[\mathfrak{D}_iF\mathfrak{D}_i\ind_{\{F> x\}}])|E[\mathfrak{D}_iF|\mathcal{F}_i]|\right].
\end{align*}
So we can finish the proof by using the fact that $\|f'_x\|_\infty\leq 1.$
\end{proof}
Because of the appearance of $\mathfrak{D}_i\ind_{\{F> x\}}$ in its expression, the quantity $B_1$ is difficult to bound in practice. In the next proposition, we provide a more convenient bound for this quantity.
\begin{prop}We have
\begin{align}
B_1&\leq \sqrt{Var(\bar{Z})}\leq \mathcal{L}_2(\bar{Z}),\label{jjkkd83}
\end{align}
where $\bar{Z}:=\sum\limits_{i=1}^{\infty} \mathfrak{D}_i(\mathfrak{D}_iF(|E[\mathfrak{D}_iF|\mathcal{F}_i]|+E_i|E[\mathfrak{D}_iF|\mathcal{F}_i]|)).$
\end{prop}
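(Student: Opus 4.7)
The plan is to recognize the integrand inside the supremum defining $B_1$ as $E[\bar{Z}\cdot\ind_{\{F>x\}}]$, and then to estimate this covariance-like quantity by Cauchy--Schwarz and the Efron--Stein inequality (\ref{sopq1}).

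Write $G_x:=\ind_{\{F>x\}}$ and $h_i:=|E[\mathfrak{D}_iF|\mathcal{F}_i]|$, and set $B_i:=\mathfrak{D}_iF\,(h_i+E_ih_i)$, so that $\bar{Z}=\sum_{i=1}^{\infty}\mathfrak{D}_iB_i$. Applying Proposition \ref{mcsk4}(iii) to each index $i$ gives
$$E[\mathfrak{D}_iB_i\cdot G_x]=E[B_i\,\mathfrak{D}_iG_x]=E[h_i\,\mathfrak{D}_iF\,\mathfrak{D}_iG_x]+E[(E_ih_i)\,\mathfrak{D}_iF\,\mathfrak{D}_iG_x].$$
For the second term, the crucial observation is that $E_ih_i$ is $\sigma(X_k,k\neq i)$-measurable, hence independent of $X_i$; conditioning on the other variables allows us to replace $\mathfrak{D}_iF\,\mathfrak{D}_iG_x$ by its $E_i$-average, and then to pull $h_i$ back inside (since $E_i(\mathfrak{D}_iF\,\mathfrak{D}_iG_x)$ also does not depend on $X_i$). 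This yields
$$E[(E_ih_i)\,\mathfrak{D}_iF\,\mathfrak{D}_iG_x]=E[h_i\,E_i(\mathfrak{D}_iF\,\mathfrak{D}_iG_x)].$$
Summing over $i$ produces the key identity
$$E[\bar{Z}\cdot G_x]=E\!\left[\sum_{i=1}^{\infty} h_i\,\bigl(\mathfrak{D}_iF\,\mathfrak{D}_iG_x+E_i[\mathfrak{D}_iF\,\mathfrak{D}_iG_x]\bigr)\right],$$
which is exactly the quantity inside the supremum defining $B_1$.

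Since $E[\bar{Z}]=0$ by Proposition \ref{mcsk4}(i) and $|G_x-E[G_x]|\le 1$, Cauchy--Schwarz gives
$$|E[\bar{Z}\cdot G_x]|=|E[\bar{Z}(G_x-E[G_x])]|\le\sqrt{\mathrm{Var}(\bar{Z})}\cdot\sqrt{E[(G_x-E[G_x])^2]}\le\sqrt{\mathrm{Var}(\bar{Z})}.$$
Taking the supremum over $x\in\mathbb{R}$ proves the first inequality $B_1\le\sqrt{\mathrm{Var}(\bar{Z})}$. The second inequality is obtained by directly applying the Efron--Stein inequality (\ref{sopq1}) to $\bar{Z}$.

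The main obstacle will be the identity step: getting the bookkeeping right requires careful use of Proposition \ref{mcsk4}(iii) together with the observation that $E_ih_i$ and $E_i(\mathfrak{D}_iF\,\mathfrak{D}_iG_x)$ do not depend on $X_i$. This is precisely what forces the symmetric pattern $\mathfrak{D}_iF\,\mathfrak{D}_iG_x+E_i[\mathfrak{D}_iF\,\mathfrak{D}_iG_x]$ to emerge from the definition $\bar{Z}=\sum_i\mathfrak{D}_i(\mathfrak{D}_iF(h_i+E_ih_i))$ and explains why the shift $h_i\mapsto h_i+E_ih_i$ is the right choice inside $\bar{Z}$.
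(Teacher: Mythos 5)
Your proof is correct and follows essentially the same route as the paper: both arguments reduce to the identity $B_1=\sup_{x}E\bigl[\bar{Z}\,\ind_{\{F>x\}}\bigr]$ via Proposition \ref{mcsk4}(iii) and the $X_i$-independence of the $E_i$-averaged factors (you verify it by expanding $E[\bar{Z}\,\ind_{\{F>x\}}]$, the paper by transforming $B_1$ in the opposite direction), and then conclude from the centredness of $\bar{Z}$ together with Cauchy--Schwarz and Efron--Stein. The only cosmetic remark is that Efron--Stein yields $\sqrt{Var(\bar{Z})}\leq\sqrt{\mathcal{L}_2(\bar{Z})}$ rather than the $\mathcal{L}_2(\bar{Z})$ written in (\ref{jjkkd83}), which appears to be a typo in the statement (Corollary \ref{oold1} uses the square root), so your reading is the intended one.
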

\begin{proof}Because $E_i[\mathfrak{D}_iF\mathfrak{D}_i\ind_{\{F> x\}}]=\mathfrak{D}_iF\mathfrak{D}_i\ind_{\{F> x\}}-\mathfrak{D}_i(\mathfrak{D}_iF\mathfrak{D}_i\ind_{\{F> x\}})$ we obtain
$$B_1=\sup\limits_{x\in \mathbb{R}}E\left[\sum\limits_{i=1}^{\infty} (2\mathfrak{D}_iF\mathfrak{D}_i\ind_{\{F> x\}}-\mathfrak{D}_i(\mathfrak{D}_iF\mathfrak{D}_i\ind_{\{F> x\}}))|E[\mathfrak{D}_iF|\mathcal{F}_i]|\right].$$
By using Proposition \ref{mcsk4}, $(iii)$ we deduce
\begin{align*}
B_1&=\sup\limits_{x\in \mathbb{R}}E\left[\sum\limits_{i=1}^{\infty} 2\mathfrak{D}_i(\mathfrak{D}_iF|E[\mathfrak{D}_iF|\mathcal{F}_i]|)\ind_{\{F> x\}}-\sum\limits_{i=1}^{\infty}\mathfrak{D}_iF\mathfrak{D}_i\ind_{\{F> x\}}\mathfrak{D}_i(|E[\mathfrak{D}_iF|\mathcal{F}_i]|)\right]\\
&=\sup\limits_{x\in \mathbb{R}}E\left[\sum\limits_{i=1}^{\infty} 2\mathfrak{D}_i(\mathfrak{D}_iF|E[\mathfrak{D}_iF|\mathcal{F}_i]|)\ind_{\{F> x\}}-\sum\limits_{i=1}^{\infty}\mathfrak{D}_i\left(\mathfrak{D}_iF\mathfrak{D}_i(|E[\mathfrak{D}_iF|\mathcal{F}_i]|)\right)\ind_{\{F> x\}}\right]\\
&=\sup\limits_{x\in \mathbb{R}}E\left[\ind_{\{F> x\}}\sum\limits_{i=1}^{\infty} \mathfrak{D}_i(\mathfrak{D}_iF(|E[\mathfrak{D}_iF|\mathcal{F}_i]|+E_i|E[\mathfrak{D}_iF|\mathcal{F}_i]|))\right]\\
&\leq E\big|\sum\limits_{i=1}^{\infty} \mathfrak{D}_i(\mathfrak{D}_iF(|E[\mathfrak{D}_iF|\mathcal{F}_i]|+E_i|E[\mathfrak{D}_iF|\mathcal{F}_i]|))\big|.
\end{align*}
We now note that each addend under the expectation is a centered random variable. Hence, (\ref{jjkkd83}) follows.
\end{proof}
\begin{rem}
Because $E_i[\mathfrak{D}_iF]=0,$ we obtain $\mathfrak{D}_i(\mathfrak{D}_iFE_i|E[\mathfrak{D}_iF|\mathcal{F}_i]|)=\mathfrak{D}_iFE_i|E[\mathfrak{D}_iF|\mathcal{F}_i]|.$
Hence, $\bar{Z}$ becomes
$$\bar{Z}=\sum\limits_{i=1}^{\infty} \mathfrak{D}_i(\mathfrak{D}_iF|E[\mathfrak{D}_iF|\mathcal{F}_i]|)+\sum\limits_{i=1}^{\infty} \mathfrak{D}_iFE_i|E[\mathfrak{D}_iF|\mathcal{F}_i]|$$
and we can obtain a further estimate for $B_1$ as follows
$$B_1\leq \sqrt{Var(\bar{Z})}\leq \sqrt{Var(\bar{Z}^{\ast})}+ \sqrt{Var(\bar{Z}^{\ast\ast})},$$
where $\bar{Z}^{\ast}:=\sum\limits_{i=1}^{\infty} \mathfrak{D}_i(\mathfrak{D}_iF|E[\mathfrak{D}_iF|\mathcal{F}_i]|)$ and $\bar{Z}^{\ast\ast}:=\sum\limits_{i=1}^{\infty} \mathfrak{D}_iFE_i|E[\mathfrak{D}_iF|\mathcal{F}_i]|.$

\end{rem}
\begin{prop} We have
\begin{equation}\label{aa1}
B_2\leq \frac{7}{2}(\sum\limits_{i=1}^\infty \sqrt{E|\mathfrak{D}_iF|^4})^{\frac{1}{2}}\sum\limits_{i=1}^\infty(E|\mathfrak{D}_iF|^4)^{\frac{3}{4}}+\frac{\sqrt{2\pi}}{4}\mathcal{L}_3(F),
\end{equation}
In particular, when $F=F(X_1,...,X_n)$ is a function of only $n$ independent random variables, we have
\begin{equation}\label{aa3}
B_2\leq 3\sqrt{n}\mathcal{L}_4(F)+\frac{\sqrt{2\pi}}{4}\mathcal{L}_3(F).
\end{equation}
\end{prop}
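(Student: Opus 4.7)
My plan is to split $B_2$ according to the additive decomposition $|F| + \frac{\sqrt{2\pi}}{4}$, writing $B_2 = B_2^{(1)} + \frac{\sqrt{2\pi}}{4} B_2^{(2)}$ where $B_2^{(1)} := E[\sum_{i\geq 1} |F|(\mathfrak{d}_iF)^2 |E[\mathfrak{D}_iF|\mathcal{F}_i]|]$ and $B_2^{(2)} := E[\sum_{i\geq 1} (\mathfrak{d}_iF)^2 |E[\mathfrak{D}_iF|\mathcal{F}_i]|]$. The second piece is immediately handled by the estimate (\ref{ode3wy}) already established in the proof of Theorem \ref{ko67d3}, which gives $B_2^{(2)} \leq \mathcal{L}_3(F)$ and thus the $\frac{\sqrt{2\pi}}{4}\mathcal{L}_3(F)$ contribution in (\ref{aa1}). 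All subsequent effort is devoted to $B_2^{(1)}$.

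For $B_2^{(1)}$, I apply H\"older's inequality with exponents $(4,2,4)$ termwise to the three factors $|F|$, $(\mathfrak{d}_iF)^2$, and $|E[\mathfrak{D}_iF|\mathcal{F}_i]|$. Proposition \ref{mcsk4}(iv) combined with $(a+b)^2 \leq 2(a^2+b^2)$ gives $E(\mathfrak{d}_iF)^4 \leq E|\mathfrak{D}_iF|^4$, and the conditional Jensen inequality gives $E|E[\mathfrak{D}_iF|\mathcal{F}_i]|^4 \leq E|\mathfrak{D}_iF|^4$. Thus $E[|F|(\mathfrak{d}_iF)^2 |E[\mathfrak{D}_iF|\mathcal{F}_i]|] \leq (E|F|^4)^{1/4}(E|\mathfrak{D}_iF|^4)^{3/4}$, and summing over $i$ yields $B_2^{(1)} \leq (E|F|^4)^{1/4} \sum_{i\geq 1}(E|\mathfrak{D}_iF|^4)^{3/4}$.

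The decisive step is to control $(E|F|^4)^{1/4}$ by $(\sum_i \sqrt{E|\mathfrak{D}_iF|^4})^{1/2}$. I use the martingale representation $F = \sum_i M_i$ with $M_i := E[\mathfrak{D}_iF|\mathcal{F}_i]$, obtained in the proof of Theorem \ref{lods3}. Burkholder's $L^4$ square-function inequality yields $E|F|^4 \leq 81\, E[(\sum_i M_i^2)^2]$ (the constant $3^4$ being the sharp Burkholder constant at $p=4$). A pairwise Cauchy-Schwarz then gives $E[(\sum_i M_i^2)^2] = \sum_{i,j} E[M_i^2 M_j^2] \leq (\sum_i \sqrt{EM_i^4})^2 \leq (\sum_i \sqrt{E|\mathfrak{D}_iF|^4})^2$, the last inequality by conditional Jensen. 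Taking fourth roots, $(E|F|^4)^{1/4} \leq 3 (\sum_i \sqrt{E|\mathfrak{D}_iF|^4})^{1/2} \leq \frac{7}{2} (\sum_i \sqrt{E|\mathfrak{D}_iF|^4})^{1/2}$, which combined with the H\"older bound for $B_2^{(1)}$ establishes (\ref{aa1}).

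For (\ref{aa3}), I specialize to $F = F(X_1,\ldots,X_n)$ and replace the pairwise Cauchy-Schwarz by the cruder $(\sum_{i=1}^n M_i^2)^2 \leq n \sum_{i=1}^n M_i^4$, producing $E|F|^4 \leq 81 n\, \mathcal{L}_4(F)$ and hence $(E|F|^4)^{1/4} \leq 3 n^{1/4}\mathcal{L}_4(F)^{1/4}$. Concavity of $x \mapsto x^{3/4}$ (Jensen with uniform weights on $\{1,\ldots,n\}$) gives $\sum_{i=1}^n(E|\mathfrak{D}_iF|^4)^{3/4} \leq n^{1/4}\mathcal{L}_4(F)^{3/4}$, so the product equals $3\sqrt{n}\mathcal{L}_4(F)$; adding the $\mathcal{L}_3$ term yields (\ref{aa3}). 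The main obstacle throughout is invoking Burkholder's inequality with the correct numerical constant so that the final bounds land at $\frac{7}{2}$ and $3$ respectively; once this is in hand, the rest is a routine chain of H\"older and Jensen estimates.
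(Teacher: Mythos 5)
Your proof is correct, and its skeleton coincides with the paper's: the same split of $B_2$ via (\ref{ode3wy}), the same termwise H\"older step with exponents $(4,2,4)$ leading to $B_2\leq (E|F|^4)^{1/4}\sum_i(E|\mathfrak{D}_iF|^4)^{3/4}+\frac{\sqrt{2\pi}}{4}\mathcal{L}_3(F)$, and the identical Burkholder-plus-Jensen treatment of the finite-$n$ case for (\ref{aa3}). Where you genuinely diverge is in controlling $E|F|^4$ for (\ref{aa1}): the paper avoids Burkholder there, instead applying the product rule $\mathfrak{D}_i(F^2)=2F\mathfrak{D}_iF-2(\mathfrak{d}_iF)^2$ together with the Efron--Stein inequality to get a self-referential bound $E|F|^4\leq 8\sqrt{E|F|^4}\sum_i\sqrt{E|\mathfrak{D}_iF|^4}+8\sum_iE|\mathfrak{D}_iF|^4+Var(F)^2$, and then solves the resulting quadratic inequality to obtain $\sqrt{E|F|^4}\leq(9+2\sqrt{2})\sum_i\sqrt{E|\mathfrak{D}_iF|^4}$, whence the constant $\sqrt{9+2\sqrt{2}}\leq 7/2$. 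You instead apply Burkholder's $L^4$ inequality to the martingale decomposition $F=\sum_iE[\mathfrak{D}_iF|\mathcal{F}_i]$ (valid here since $F$ is centered, as in Theorem \ref{mld2sk}) followed by a pairwise Cauchy--Schwarz, getting the slightly sharper $\sqrt{E|F|^4}\leq 9\sum_i\sqrt{E|\mathfrak{D}_iF|^4}$ and hence the constant $3\leq 7/2$. Your route is arguably cleaner: it gives a better constant, unifies the infinite and finite cases under one tool, and (via Fatou along the filtration) does not need the a priori finiteness of $E|F|^4$ that the paper's quadratic-inequality trick implicitly requires; the paper's route, on the other hand, stays entirely within the difference-operator calculus (Efron--Stein and the product rule) and needs no martingale inequality for the general bound.
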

\begin{proof}The estimates (\ref{ode3wy}) give us the bound
$$B_2\leq \sum\limits_{i=1}^{\infty}E\big[|F|(\mathfrak{d}_iF)^2\,|E[\mathfrak{D}_iF|\mathcal{F}_i]|\big]+\frac{\sqrt{2\pi}}{4}\mathcal{L}_3(F).$$
Put $B_{2,i}:=E\big[|F|(\mathfrak{d}_iF)^2\,|E[\mathfrak{D}_iF|\mathcal{F}_i]|\big].$ Then, we can rewrite $B_2$ as
$$B_2\leq \sum\limits_{i=1}^{\infty}B_{2,i}+\frac{\sqrt{2\pi}}{4}\mathcal{L}_3(F).$$
By the H\"older inequality, each addend $B_{2,i}$ can be estimated as follows
\begin{align*}
B_{2,i}&\leq (E\big[(\mathfrak{d}_iF)^4\big])^{\frac{1}{2}}(E\big[(E[\mathfrak{D}_iF|\mathcal{F}_i])^4\big])^{\frac{1}{4}} (E|F|^4)^{\frac{1}{4}}\\
&\leq (E\big[(\mathfrak{d}_iF)^4\big])^{\frac{1}{2}}(E|\mathfrak{D}_iF|^4)^{\frac{1}{4}} (E|F|^4)^{\frac{1}{4}}.
\end{align*}
By using the Cauchy-Schwarz inequality, $E\big[(\mathfrak{d}_iF)^4\big]\leq E|\mathfrak{D}_iF|^4$ because $(\mathfrak{d}_iF)^2=\frac{1}{2}[(\mathfrak{D}_iF)^2+E_i(\mathfrak{D}_iF)^2].$ Hence, we can get
$$B_{2,i}\leq (E|F|^4)^{\frac{1}{4}}(E|\mathfrak{D}_iF|^4)^{\frac{3}{4}},\,\,i\geq 1$$
and so
\begin{equation}\label{jjdl2}
B_2\leq (E|F|^4)^{\frac{1}{4}}\sum\limits_{i=1}^{\infty}(E|\mathfrak{D}_iF|^4)^{\frac{3}{4}}+\frac{\sqrt{2\pi}}{4}\mathcal{L}_3(F).
\end{equation}
It only remains to estimate $E|F|^4.$ For each $i\geq 1,$ it follows from Proposition \ref{mcsk4}, $(v)$ that
 $$\mathfrak{D}_i(F^2)=2F\mathfrak{D}_iF-(\mathfrak{D}_iF)^2-E_i[(\mathfrak{D}_iF)^2]=2F\mathfrak{D}_iF-2(\mathfrak{d}_iF)^2.$$
By using the Cauchy-Schwarz inequality again, we obtain
\begin{align*}
E|\mathfrak{D}_i(F^2)|^2&\leq 8E|F\mathfrak{D}_iF|^2+8E[(\mathfrak{D}_iF)^4]\\
&\leq 8\sqrt{E|F|^4} \sqrt{E|\mathfrak{D}_iF|^4}+8E|\mathfrak{D}_iF|^4,\,\,i\geq 1.
\end{align*}
By the Efron-Stein inequality (\ref{sopq1}) we have
\begin{align*}
E|F|^4&=Var(F^2)+Var(F)^2\leq \sum\limits_{i=1}^\infty E|\mathfrak{D}_i(F^2)|^2+Var(F)^2\\
&\leq 8\sqrt{E|F|^4}\sum\limits_{i=1}^\infty \sqrt{E|\mathfrak{D}_iF|^4}+8\sum\limits_{i=1}^\infty E|\mathfrak{D}_iF|^4+Var(F)^2,
\end{align*}
which leads us to the following
$$
\sqrt{E|F|^4}\leq 4\sum\limits_{i=1}^\infty \sqrt{E|\mathfrak{D}_iF|^4}+\sqrt{16(\sum\limits_{i=1}^\infty \sqrt{E|\mathfrak{D}_iF|^4})^2+8\sum\limits_{i=1}^\infty E|\mathfrak{D}_iF|^4+Var(F)^2}.
$$
This, together with the elementary inequality $\sqrt{a+b}\leq \sqrt{a}+\sqrt{b},$ yields
\begin{align}
\sqrt{E|F|^4}&\leq (8+2\sqrt{2})\sum\limits_{i=1}^\infty \sqrt{E|\mathfrak{D}_iF|^4}+Var(F)\nonumber\\
&\leq (9+2\sqrt{2})\sum\limits_{i=1}^\infty \sqrt{E|\mathfrak{D}_iF|^4}.\label{hh3s}
\end{align}
In the last inequality we used the fact that $Var(F)\leq \sum\limits_{i=1}^\infty E|\mathfrak{D}_iF|^2\leq \sum\limits_{i=1}^\infty \sqrt{E|\mathfrak{D}_iF|^4}.$ Combining (\ref{jjdl2}) and (\ref{hh3s}) we obtain (\ref{aa1}).


We now consider the case, where $F$ is a function of only $n$ independent random variables. Since $\mathfrak{D}_iF=0$ for all $i\geq n+1,$ the estimate (\ref{jjdl2}) reduces to
$$B_2\leq (E|F|^4)^{\frac{1}{4}}\sum\limits_{i=1}^{n}(E|\mathfrak{D}_iF|^4)^{\frac{3}{4}}+\frac{\sqrt{2\pi}}{4}\mathcal{L}_3(F).$$
We observe that $(E[F|\mathcal{F}_i])_{1\leq i\leq n}$ is a martingale with $F=E[F|\mathcal{F}_n].$ Hence, the Burkholder's inequality \cite{Burkholder1988} implies
\begin{align*}
E|F|^4&\leq 3^4 E\left(\sum\limits_{i=1}^{n}(E[F|\mathcal{F}_i]-E[F|\mathcal{F}_{i-1}])^2\right)^{2}\\
&= 3^4E\left(\sum\limits_{i=1}^{n}(E[\mathfrak{D}_iF|\mathcal{F}_i])^2\right)^{2}.
\end{align*}
So we can obtain the following
\begin{align*}
E|F|^4
&\leq3^4 n \sum\limits_{i=1}^{n}E|E[\mathfrak{D}_iF|\mathcal{F}_i]|^4\\
&\leq  3^4 n \sum\limits_{i=1}^{n}E|\mathfrak{D}_iF|^4\\
&=3^4 n \mathcal{L}_4(F).
\end{align*}
On the other hand, we use the elementary inequality $|a_1|+...+|a_n|\leq n^{1-\frac{1}{p}}(|a_1|^p+...+|a_n|^p)^{\frac{1}{p}}$ with $p=\frac{4}{3}$ to get
$$\sum\limits_{i=1}^{n}(E|\mathfrak{D}_iF|^4)^{\frac{3}{4}}\leq n^{\frac{1}{4}}(\mathcal{L}_4(F))^{\frac{3}{4}}.$$
Consequently,
\begin{align*}B_2&\leq 3n^{\frac{1}{4}}(\mathcal{L}_4(F))^{\frac{1}{4}}n^{\frac{1}{4}}(\mathcal{L}_4(F))^{\frac{3}{4}}+\frac{\sqrt{2\pi}}{4}\mathcal{L}_3(F)\\
&=3\sqrt{n}\mathcal{L}_4(F)+\frac{\sqrt{2\pi}}{4}\mathcal{L}_3(F).
\end{align*}
The proof is complete.
\end{proof}
From the above computations, we obtain the following corollary.
\begin{cor}\label{oold1}Let $F=F(X)$ be in $L^2(P)$ with mean zero and variance $\sigma^2>0.$ Then
\begin{align}
d_K&(\sigma^{-1}F,N)\nonumber\\
&\leq  \frac{\sqrt{Var(Z)}}{\sigma^2}+ \frac{\sqrt{Var(\bar{Z})}}{\sigma^2}+ \frac{7(\sum\limits_{i=1}^\infty \sqrt{E|\mathfrak{D}_iF|^4})^{\frac{1}{2}}\sum\limits_{i=1}^\infty(E|\mathfrak{D}_iF|^4)^{\frac{3}{4}}}{2\sigma^4}+\frac{\sqrt{2\pi}}{4\sigma^3}
\mathcal{L}_3(F)\label{jjddm2}\\
&\leq  \frac{\sqrt{\mathcal{L}_2(Z)}}{\sigma^2}+ \frac{\sqrt{\mathcal{L}_2(\bar{Z})}}{\sigma^2}+ \frac{7(\sum\limits_{i=1}^\infty \sqrt{E|\mathfrak{D}_iF|^4})^{\frac{1}{2}}\sum\limits_{i=1}^\infty(E|\mathfrak{D}_iF|^4)^{\frac{3}{4}}}{2\sigma^4}+\frac{\sqrt{2\pi}}{4\sigma^3}
\mathcal{L}_3(F),
\end{align}
where $Z:=\sum\limits_{i=1}^{\infty}  \mathfrak{D}_iF E[\mathfrak{D}_iF|\mathcal{F}_i]$ and $\bar{Z}:=\sum\limits_{i=1}^{\infty} \mathfrak{D}_i(\mathfrak{D}_iF(|E[\mathfrak{D}_iF|\mathcal{F}_i]|+E_i|E[\mathfrak{D}_iF|\mathcal{F}_i]|)).$
\end{cor}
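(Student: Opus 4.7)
The plan is to derive the bound as a direct rescaling of the preceding results. First I would set $G := \sigma^{-1} F$ and observe that $G$ has mean zero and $E[G^2] = 1$, so that Theorem \ref{mld2sk} applied to $G$ gives
\[
d_K(G, N) \leq \sqrt{Var(Z_G)} + B_1(G) + B_2(G),
\]
where the term $|1 - E[G^2]|$ vanishes. Here $Z_G$, $B_1(G)$ and $B_2(G)$ are the quantities built from $G$ in place of $F$. The task is therefore reduced to expressing everything back in terms of $F$ and $\sigma$.

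Next I would track the scaling of each ingredient. Since $\mathfrak{D}_i$ is linear, $\mathfrak{D}_i G = \sigma^{-1} \mathfrak{D}_i F$ and $E[\mathfrak{D}_i G \mid \mathcal{F}_i] = \sigma^{-1} E[\mathfrak{D}_i F \mid \mathcal{F}_i]$, so $Z_G = \sigma^{-2} Z$ and analogously $\bar{Z}_G = \sigma^{-2} \bar{Z}$. Hence $\sqrt{Var(Z_G)} = \sigma^{-2}\sqrt{Var(Z)}$, and after invoking the Proposition that bounds $B_1 \leq \sqrt{Var(\bar{Z})}$ applied to $G$, the second term contributes $\sigma^{-2}\sqrt{Var(\bar{Z})}$.

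For $B_2(G)$ I would apply the estimate (\ref{aa1}) from the previous Proposition to $G$ and use that $E|\mathfrak{D}_i G|^4 = \sigma^{-4} E|\mathfrak{D}_i F|^4$. Then $\sum_i \sqrt{E|\mathfrak{D}_i G|^4} = \sigma^{-2}\sum_i \sqrt{E|\mathfrak{D}_i F|^4}$ and $\sum_i (E|\mathfrak{D}_i G|^4)^{3/4} = \sigma^{-3}\sum_i (E|\mathfrak{D}_i F|^4)^{3/4}$, whose product carries a total $\sigma^{-5}$, which combines with the prefactor $\tfrac{1}{\sigma}$ from $(\sum\sqrt{E|\mathfrak{D}_i G|^4})^{1/2}$-type accounting to yield the $\sigma^{-4}$ denominator in the stated bound. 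Similarly, $\mathcal{L}_3(G) = \sigma^{-3} \mathcal{L}_3(F)$ produces the final $\sigma^{-3}$ factor in front of $\mathcal{L}_3(F)$. This gives (\ref{jjddm2}).

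For the second inequality of the corollary, I would simply apply the Efron-Stein inequality (\ref{sopq1}) to the random variables $Z$ and $\bar{Z}$, which yields $Var(Z) \leq \mathcal{L}_2(Z)$ and $Var(\bar{Z}) \leq \mathcal{L}_2(\bar{Z})$. There is no real obstacle here: the argument is entirely bookkeeping, with the only minor care being the homogeneity check on $\bar{Z}$, where both the inner product $\mathfrak{D}_i F \cdot (|E[\mathfrak{D}_i F|\mathcal{F}_i]| + E_i |E[\mathfrak{D}_i F|\mathcal{F}_i]|)$ and the outer operator $\mathfrak{D}_i$ must be tracked to confirm the correct power $\sigma^{-2}$.
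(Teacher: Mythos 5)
Your proposal is correct and follows essentially the same route as the paper, which obtains the corollary by applying Theorem \ref{mld2sk} to $\sigma^{-1}F$ (so that the term $|1-E[(\sigma^{-1}F)^2]|$ vanishes), bounding $B_1$ by $\sqrt{Var(\bar Z)}$ and $B_2$ by the estimate (\ref{aa1}), tracking the homogeneity in $\sigma$ exactly as you do, and finishing with the Efron--Stein inequality for the second line. The only cosmetic point is the transient mention of a ``total $\sigma^{-5}$'' in your $B_2$ accounting; the relevant factor is $(\sum_i\sqrt{E|\mathfrak{D}_i(\sigma^{-1}F)|^4})^{1/2}\cdot\sum_i(E|\mathfrak{D}_i(\sigma^{-1}F)|^4)^{3/4}=\sigma^{-1}\cdot\sigma^{-3}=\sigma^{-4}$ times the corresponding $F$-quantities, which is the conclusion you reach.
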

When $F$ is a function of a finite number of independent random variables,  we can also use (\ref{aa3}) for bounding $B_2.$
\begin{cor}\label{oold1b}Let $F=F(X_1,...,X_n)$ be a function of $n$ independent random variables. Assume that $E[F]=0$ and $E[F^2]=\sigma^2\in (0,\infty).$ Then
\begin{align*}
d_K(\sigma^{-1}F,N)&\leq  \frac{\sqrt{Var(Z)}}{\sigma^2}+ \frac{\sqrt{Var(\bar{Z})}}{\sigma^2}+ \frac{3\sqrt{n}}{\sigma^4}\mathcal{L}_4(F)+\frac{\sqrt{2\pi}}{4\sigma^3}\mathcal{L}_3(F)\\
&\leq  \frac{\sqrt{\mathcal{L}_2(Z)}}{\sigma^2}+ \frac{\sqrt{\mathcal{L}_2(\bar{Z})}}{\sigma^2}+ \frac{3\sqrt{n}}{\sigma^4}\mathcal{L}_4(F)+\frac{\sqrt{2\pi}}{4\sigma^3}\mathcal{L}_3(F),
\end{align*}
where $Z:=\sum\limits_{i=1}^{n}  \mathfrak{D}_iF E[\mathfrak{D}_iF|\mathcal{F}_i]$ and $\bar{Z}:=\sum\limits_{i=1}^{n} \mathfrak{D}_i(\mathfrak{D}_iF(|E[\mathfrak{D}_iF|\mathcal{F}_i]|+E_i|E[\mathfrak{D}_iF|\mathcal{F}_i]|)).$
\end{cor}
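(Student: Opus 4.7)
The plan is to apply Theorem \ref{mld2sk} directly to the normalized functional $F' := \sigma^{-1} F$ and then scale everything back to $F$, using the finite-$n$ bound (\ref{aa3}) in place of (\ref{aa1}) to handle the $B_2$ contribution.

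First I would verify the natural scaling of the various objects under $F \mapsto \sigma^{-1} F$. Since $\mathfrak{D}_i$ and conditional expectation are linear, one has $\mathfrak{D}_i F' = \sigma^{-1} \mathfrak{D}_i F$ and $E[\mathfrak{D}_i F' | \mathcal{F}_i] = \sigma^{-1} E[\mathfrak{D}_i F | \mathcal{F}_i]$, hence $Z' := \sum_i \mathfrak{D}_i F' \, E[\mathfrak{D}_i F' | \mathcal{F}_i] = \sigma^{-2} Z$ and similarly $\bar Z' = \sigma^{-2} \bar Z$. Consequently $\sqrt{\operatorname{Var}(Z')} = \sigma^{-2}\sqrt{\operatorname{Var}(Z)}$, and the same for $\bar Z$. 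Moreover $\mathcal{L}_r(F') = \sigma^{-r} \mathcal{L}_r(F)$ for $r = 3, 4$.

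Next, since $E[F'] = 0$ and $E[(F')^2] = 1$, the $|1 - E[(F')^2]|$ term in Theorem \ref{mld2sk} vanishes, leaving
\[
d_K(\sigma^{-1} F, N) \leq \sqrt{\operatorname{Var}(Z')} + B_1' + B_2',
\]
where $B_1', B_2'$ are defined in terms of $F'$. For $B_1'$, I apply (\ref{jjkkd83}) from the Proposition just before the remark, which gives $B_1' \leq \sqrt{\operatorname{Var}(\bar Z')} = \sigma^{-2} \sqrt{\operatorname{Var}(\bar Z)}$. For $B_2'$, since $F$ (and hence $F'$) depends on only $n$ independent variables, I invoke the finite-$n$ estimate (\ref{aa3}) to obtain
\[
B_2' \leq 3\sqrt{n}\, \mathcal{L}_4(F') + \tfrac{\sqrt{2\pi}}{4} \mathcal{L}_3(F') = \frac{3\sqrt{n}}{\sigma^4} \mathcal{L}_4(F) + \frac{\sqrt{2\pi}}{4\sigma^3} \mathcal{L}_3(F).
\]
Assembling the three contributions yields the first claimed inequality.

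For the second inequality, I simply apply the Efron-Stein inequality (\ref{sopq1}) to $Z$ and $\bar Z$ (both have mean zero by Proposition \ref{mcsk4}(i) applied componentwise, or directly since they are finite sums of centered increments), giving $\operatorname{Var}(Z) \leq \mathcal{L}_2(Z)$ and $\operatorname{Var}(\bar Z) \leq \mathcal{L}_2(\bar Z)$. There is no real obstacle here: everything reduces to bookkeeping of the scaling factors and a direct invocation of the previous theorem and proposition. The only point that deserves a moment of care is to confirm that the $B_1$ bound proved via (\ref{jjkkd83}) indeed applies verbatim to $F'$, which it does since (\ref{jjkkd83}) was established for arbitrary $F \in L^2(P)$ and $F' \in L^2(P)$.
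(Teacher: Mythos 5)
Your proposal is correct and follows exactly the route the paper intends: the corollary is obtained by applying Theorem \ref{mld2sk} to $\sigma^{-1}F$, bounding $B_1$ via (\ref{jjkkd83}), bounding $B_2$ via the finite-$n$ estimate (\ref{aa3}), and finishing with the Efron--Stein inequality (\ref{sopq1}); your explicit verification of the scaling $Z'=\sigma^{-2}Z$, $\bar Z'=\sigma^{-2}\bar Z$, $\mathcal{L}_r(F')=\sigma^{-r}\mathcal{L}_r(F)$ is just the bookkeeping the paper leaves implicit.
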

Notice that we also have
$$B_2\leq \sqrt{E[F^2]} \sum\limits_{i=1}^\infty\sqrt{E|\mathfrak{D}_iF|^6}+\frac{\sqrt{2\pi}}{4}\mathcal{L}_3(F).$$
Indeed, by the H\"older inequality
\begin{align*}
B_{2,i}&\leq \sqrt{E[F^2]}\sqrt{E\big[(\mathfrak{d}_iF)^4\,|E[\mathfrak{D}_iF|\mathcal{F}_i]|^2\big]}\\
&\leq \sqrt{E[F^2]}(E\big[(\mathfrak{d}_iF)^6\big])^{\frac{1}{3}}(E|E[\mathfrak{D}_iF|\mathcal{F}_i]|^6)^{\frac{1}{6}} \\
&\leq  \sqrt{E[F^2]} \sqrt{E|\mathfrak{D}_iF|^6},\,\,i\geq 1.
\end{align*}
We therefore obtain
\begin{cor}\label{o7old1q}Let $F=F(X)$ be in $L^2(P)$ with mean zero and variance $\sigma^2>0.$ Then
\begin{align}
d_K(\sigma^{-1}F,N)
&\leq  \frac{\sqrt{Var(Z)}}{\sigma^2}+ \frac{\sqrt{Var(\bar{Z})}}{\sigma^2}+ \frac{\sum\limits_{i=1}^\infty \sqrt{E|\mathfrak{D}_iF|^6}}{\sigma^3}+\frac{\sqrt{2\pi}}{4\sigma^3}
\mathcal{L}_3(F)\label{jjddm2bbq}
\end{align}
where $Z$ and $\bar{Z}$ are as in Corollary \ref{oold1}.
\end{cor}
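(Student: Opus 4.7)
The plan is to apply Theorem \ref{mld2sk} directly to the normalized variable $G := \sigma^{-1}F$. Since $E[G]=0$ and $E[G^2]=1$, the first contribution $|1-E[G^2]|$ in the second line of the bound from Theorem \ref{mld2sk} vanishes, leaving only the three terms $\sqrt{Var(Z_G)}+B_1(G)+B_2(G)$ to estimate.

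The next step is to record the scaling behavior of the operators. Because $\mathfrak{D}_i$ is $\mathbb{R}$-linear and commutes with conditional expectations, $\mathfrak{D}_iG = \sigma^{-1}\mathfrak{D}_iF$ and $E[\mathfrak{D}_iG|\mathcal{F}_i]=\sigma^{-1}E[\mathfrak{D}_iF|\mathcal{F}_i]$, so the random variable $Z$ associated with $G$ equals $\sigma^{-2}Z$, and hence $\sqrt{Var(Z_G)}=\sigma^{-2}\sqrt{Var(Z)}$, producing the first summand of the claimed bound. A parallel computation on the nested expression defining $\bar Z$ (using that $E_i$ and $|\cdot|$ commute with multiplication by the positive scalar $\sigma^{-1}$) gives $\bar{Z}_G = \sigma^{-2}\bar{Z}$, and therefore (\ref{jjkkd83}) applied to $G$ yields $B_1(G)\leq \sqrt{Var(\bar Z_G)} = \sigma^{-2}\sqrt{Var(\bar Z)}$, which is the second summand of (\ref{jjddm2bbq}).

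For $B_2(G)$ I would invoke the sixth-moment estimate
$$B_2 \leq \sqrt{E[F^2]}\sum_{i=1}^\infty \sqrt{E|\mathfrak{D}_iF|^6} + \frac{\sqrt{2\pi}}{4}\mathcal{L}_3(F)$$
that was established in the paragraph immediately preceding the statement of the corollary, now applied with $G$ in place of $F$. Using $E[G^2]=1$, the identity $\sqrt{E|\mathfrak{D}_iG|^6} = \sigma^{-3}\sqrt{E|\mathfrak{D}_iF|^6}$, and $\mathcal{L}_3(G)=\sigma^{-3}\mathcal{L}_3(F)$ then converts this into the third and fourth terms of the claimed bound. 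Summing the four contributions produces exactly (\ref{jjddm2bbq}).

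There is no real obstacle here: the corollary is a direct specialization of Theorem \ref{mld2sk} once (i) one normalizes $F$ so that the mean-square error disappears, (ii) one tracks the positive homogeneity of the operators $\mathfrak{D}_i$, $E_i$ and $|\cdot|$ under scalar multiplication by $\sigma^{-1}$, and (iii) one inserts the two already-proved ingredients $B_1\leq \sqrt{Var(\bar Z)}$ and the Hölder-based sixth-moment estimate on $B_2$. The only mild bookkeeping is verifying that the nested expression $\bar Z$ picks up the expected power $\sigma^{-2}$ rather than something more awkward, but this follows from the fact that the outer operator $\mathfrak{D}_i$ is linear while the inner pieces together carry two factors of $\mathfrak{D}_iF$.
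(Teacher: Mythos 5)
Your proposal is correct and follows essentially the same route as the paper: the corollary is obtained by applying Theorem \ref{mld2sk} to $\sigma^{-1}F$ (so the term $|1-E[(\sigma^{-1}F)^2]|$ vanishes), bounding $B_1$ by $\sqrt{Var(\bar Z)}$ via (\ref{jjkkd83}), and inserting the H\"older-based sixth-moment estimate $B_2\leq \sqrt{E[F^2]}\sum_{i}\sqrt{E|\mathfrak{D}_iF|^6}+\frac{\sqrt{2\pi}}{4}\mathcal{L}_3(F)$ established just before the corollary, with the scaling $\mathfrak{D}_i(\sigma^{-1}F)=\sigma^{-1}\mathfrak{D}_iF$ accounting for the powers of $\sigma$. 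Your bookkeeping of the homogeneity of $Z$ and $\bar Z$ under this normalization is exactly what the paper leaves implicit.
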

\begin{rem}Comparing with Theorem 3.1 in \cite{Krokowski2016} and Theorem 4.2 in \cite{Rey2017}, our Corollaries provide new Berry-Esseen bounds for Rademacher functionals and for the functions of a finite number of independent random variables.
\end{rem}

\begin{exam}\label{dhslqo} Let $\varepsilon:=(\varepsilon_1,\varepsilon_2,...)$ be an independent Rademacher sequence, $P(\varepsilon_i=\pm1)=\frac{1}{2},\,\,i\geq 1.$ We consider the normalized series
$$F=\sum\limits_{i=1}^\infty a_i\varepsilon_i,$$
where $a_i,\,\,i\geq 1$ are real numbers such that $\sigma^2=\sum\limits_{n=1}^\infty a^2_i=1.$ Theorem 4.1 in \cite{Krokowski2016} provides the following bound on Kolmogorov distance
$$d_K(F,N)\leq 2\sum\limits_{i=1}^\infty |a_i|^3+\sup\limits_{x\in \mathbb{R}}\sum\limits_{i=1}^\infty |a_i|^2P\left(x-|a_i|\leq\sum\limits_{k=1,k\neq i}^\infty a_k\varepsilon_k \leq x+|a_i|\right),$$
where the probabilities in the second term are unknown. By applying Corollary \ref{o7old1q} we obtain the following
$$d_K(F,N)\leq 2\sum\limits_{i=1}^\infty |a_i|^3+2(\sum\limits_{n=1}^\infty a^4_i)^{\frac{1}{2}}.$$
Indeed, by the straightforward computations, we have
$$\mathfrak{D}_iF=a_i\varepsilon_i=E[\mathfrak{D}_iF|\mathcal{F}_i],$$
$$Z=1,\,\,\,\bar{Z}=\sum\limits_{n=1}^\infty 2|a_i|a_i\varepsilon_i,\,\,\,Var(Z)=0,\,\,\,Var(\bar{Z})=\sum\limits_{n=1}^\infty 4a^4_i,$$
$$\sum\limits_{i=1}^\infty \sqrt{E|\mathfrak{D}_iF|^6}=\sum\limits_{n=1}^\infty |a_i|^3,\,\,\,\mathcal{L}_3(F)=\sum\limits_{n=1}^\infty |a_i|^3.$$
\end{exam}
\begin{exam}\label{dh5slqo} Let $\varepsilon:=(\varepsilon_1,\varepsilon_2,...)$ be as in Example \ref{dhslqo} and $A=(a_{ij})_{n\times n}$ be a real symmetric
matrix ($n$ can be infinite). We consider the quadratic form
$$F=\sum\limits_{1\leq i\leq j\leq n} a_{ij}\varepsilon_i\varepsilon_j.$$
Since $E[F]=\sum\limits_{i=1}^na_{ii},$ we can and will assume that $a_{ii}=0$ for all $i.$ Under this assumption, we have $E[F]=0,\sigma^2=Var(F)=\sum\limits_{1\leq i\leq j\leq n} a^2_{ij}$ and
$$\mathfrak{D}_iF=\varepsilon_i\sum\limits_{j=1}^n a_{ij}\varepsilon_j\,\,\,\text{and}\,\,\,E[\mathfrak{D}_iF|\mathcal{F}_i]=\varepsilon_i\sum\limits_{j=1}^{i} a_{ij}\varepsilon_j,\,\,i\geq 1.$$
Hence, $Z=\sum\limits_{i=1}^n \mathfrak{D}_iFE[\mathfrak{D}_iF|\mathcal{F}_i]=\sum\limits_{j,k=1}^n \big(\sum\limits_{i=k}^na_{ij} a_{ik}\big)\varepsilon_j\varepsilon_k.$ It is easy to see that
\begin{equation}\label{quadratic3}
Var(Z)\leq 2\sum\limits_{j,k=1}^n \big(\sum\limits_{i=k}^na_{ij} a_{ik}\big)^2.
\end{equation}
In the remainder of this example, $c$ will denote a generic constant that does not depend on anything else and the value of $c$ may change from line to line.
By using the same arguments as in the proof of (\ref{hh3s}), we get
\begin{equation}\label{ppqqg3}
E|\mathfrak{D}_iF|^4=E\big|\sum\limits_{j=1}^n a_{ij}\varepsilon_j\big|^4\leq c\big(\sum\limits_{j=1}^n a^2_{ij}\big)^{2},\,\,i\geq 1
\end{equation}
and so $E|\mathfrak{D}_iF|^3\leq (E|\mathfrak{D}_iF|^4)^{\frac{3}{4}}\leq c\big(\sum\limits_{j=1}^n a^2_{ij}\big)^{\frac{3}{2}}$ and
\begin{equation}\label{quadratic4}
\mathcal{L}_3(F)\leq  c\sum\limits_{i=1}^n\big(\sum\limits_{j=1}^n a^2_{ij}\big)^{\frac{3}{2}}.
\end{equation}
Note that, when $n<\infty,$ we can use Khintchine's inequality \cite{Haagerup1981} to get the best constant $c=2^{3/2}\Gamma(2)/\sqrt{\pi}\leq 1.6.$ As a consequence, the bound (\ref{de2q1}) gives us
\begin{equation}\label{quadratic5}d_W(\sigma^{-1}F,N)\leq c\left\{\sqrt{\frac{2\sum\limits_{j,k=1}^n \big(\sum\limits_{i=k}^na_{ij} a_{ik}\big)^2}{\sigma^4}}+\frac{\sum\limits_{i=1}^n \big(\sum\limits_{j=1}^n a^2_{ij}\big)^{\frac{3}{2}}}{\sigma^3}\right\}.
\end{equation}
We now observe that $\bar Z=\sum\limits_{i=1}^{n}\bar{Z}_i,$ where
$$\bar{Z}_i:=\mathfrak{D}_i(\mathfrak{D}_iF(|E[\mathfrak{D}_iF|\mathcal{F}_i]|+E_i|E[\mathfrak{D}_iF|\mathcal{F}_i]|))=2\varepsilon_i\sum\limits_{j=1}^n a_{ij}\varepsilon_j\big|\sum\limits_{j=1}^{i} a_{ij}\varepsilon_j\big|,\,\,i\geq 1.$$
Once again, by using the same arguments as in the proof of (\ref{hh3s}) we can obtain
$$
E|\bar{Z}_i|^2\leq 4\big(E\big|\sum\limits_{j=1}^n a_{ij}\varepsilon_j\big|^4E\big|\sum\limits_{j=1}^{i} a_{ij}\varepsilon_j\big|^4\big)^{\frac{1}{2}}\leq c\big(\sum\limits_{j=1}^n a^2_{ij}\big)^2,\,\,i\geq 1.
$$
For $i\neq i',$ we consider the random variable
$$\bar{Z}^{i'}_i:=2\varepsilon_i\sum\limits_{j=1,j\neq i'}^n a_{ij}\varepsilon_j\big|\sum\limits_{j=1,j\neq i'}^{i} a_{ij}\varepsilon_j\big|.$$
We can verify that
$$E[\bar{Z}_i\bar{Z}_{i'}^i]=E[\bar{Z}_{i'}\bar{Z}^{i'}_i]=E[\bar{Z}^{i'}_i\bar{Z}_{i'}^i]=0,\,\,\,i\neq i'.$$
Hence,
$$E[\bar{Z}_i\bar{Z}_{i'}]=E[(\bar{Z}_i-\bar{Z}^{i'}_i)(\bar{Z}_{i'}-\bar{Z}_{i'}^i)]\leq \frac{1}{2}E|\bar{Z}_i-\bar{Z}^{i'}_i|^2+\frac{1}{2}E|\bar{Z}_{i'}-\bar{Z}_{i'}^i|^2,\,\,\,i\neq i'.$$
Without loss of generality we assume that $i'<i.$ Then, by the definition of $\bar{Z}_i$ and $\bar{Z}^{i'}_i,$ we have
$$|\bar{Z}_i-\bar{Z}^{i'}_i|\leq 2\big(\big|a_{ii'}\varepsilon_{i'}\sum\limits_{j=1,j\neq i'}^n a_{ij}\varepsilon_j\big|+\big|a_{ii'}\varepsilon_{i'}\sum\limits_{j=1,j\neq i'}^{i} a_{ij}\varepsilon_j\big|+a^2_{ii'}\varepsilon^2_{i'}\big),$$
$$|\bar{Z}_{i'}-\bar{Z}_{i'}^i|= 2\big|a_{i'i}\varepsilon_{i}\sum\limits_{j=1}^{i'} a_{i'j}\varepsilon_j\big|.$$
Thus, for some positive constant $c,$
$$E|\bar{Z}_i-\bar{Z}^{i'}_i|^2\leq 4\big(a^2_{ii'}\sum\limits_{j=1,j\neq i'}^n a^2_{ij}+a^2_{ii'}\sum\limits_{j=1,j\neq i'}^{i} a^2_{ij}+a^4_{ii'}\big)\leq ca^2_{ii'}\sum\limits_{j=1}^n a^2_{ij},
$$
$$E|\bar{Z}_{i'}-\bar{Z}_{i'}^i|^2= 4a^2_{i'i}\sum\limits_{j=1}^{i'} a^2_{i'j}\leq ca^2_{i'i}\sum\limits_{j=1}^{n} a^2_{i'j}.$$
Since $Var(\bar Z) =\sum\limits_{i=1}^{n} E|\bar{Z}_i|^2+\sum\limits_{i,i'=1,i\neq i'}^{n} E[\bar{Z}_i\bar{Z}_{i'}],$ combining the above inequalities yields
\begin{equation}\label{quadratic1}
Var(\bar Z)\leq c\sum\limits_{i=1}^n\big(\sum\limits_{j=1}^n a^2_{ij}\big)^2.
\end{equation}
It follows from (\ref{ppqqg3}) that
\begin{align}
(\sum\limits_{i=1}^\infty \sqrt{E|\mathfrak{D}_iF|^4})^{\frac{1}{2}}\sum\limits_{i=1}^\infty(E|\mathfrak{D}_iF|^4)^{\frac{3}{4}}&
\leq c\big(\sum\limits_{i=1}^n\sum\limits_{j=1}^n a^2_{ij}\big)^{\frac{1}{2}}\sum\limits_{i=1}^n\big(\sum\limits_{j=1}^n a^2_{ij}\big)^{\frac{3}{2}}\nonumber\\
&=\sqrt{2}c\sigma\sum\limits_{i=1}^n\big(\sum\limits_{j=1}^n a^2_{ij}\big)^{\frac{3}{2}}.\label{quadratic2}
\end{align}
Using the bound (\ref{jjddm2}), we obtain from  (\ref{quadratic3}), (\ref{quadratic4}), (\ref{quadratic1}) and (\ref{quadratic2}) that
\begin{equation}\label{quadratic5b}
d_K(\sigma^{-1}F,N)\leq c\left\{\sqrt{\frac{2\sum\limits_{j,k=1}^n \big(\sum\limits_{i=k}^na_{ij} a_{ik}\big)^2}{\sigma^4}}+\sqrt{\frac{\sum\limits_{i=1}^n\big(\sum\limits_{j=1}^n a^2_{ij}\big)^2}{\sigma^4}}+\frac{\sum\limits_{i=1}^n \big(\sum\limits_{j=1}^n a^2_{ij}\big)^{\frac{3}{2}}}{\sigma^3}\right\}.
\end{equation}
\end{exam}
\begin{rem}We only consider the quadratic form of Rademacher random variables for illustration purpose. The bounds obtained in Example \ref{dh5slqo} can be generalized easily to the quadratic form
$$F=\sum\limits_{1\leq i\leq j\leq n} a_{ij}X_iX_j,$$
where $X_i's$ are independent ones with mean zero, $\sup\limits_{i\geq 1}E|X_i|^4<\infty$ and $A=(a_{ij})_{n\times n}$ is a real symmetric matrix with vanishing diagonal. 
\end{rem}
\begin{rem}The reader can consult Section 3.1 of \cite{Chatterjee2008} for a short review on normal approximation results for the quadratic forms. It can be seen that our computations presented in Example \ref{dh5slqo} are really simple.
\end{rem}

\subsection{Applications to locally dependent random variables}
We refer the reader to Section 4.7 and Chapter 9 of \cite{Chen2011} for the bounds in the normal approximation for finite sums of locally dependent random variables. Because we only work on the functions of independent random variables, our framework is more restrictive than that considered in \cite{Chen2011}. However, we would like to emphasize that our results are able to apply to infinite sums.

Our first application is devoted to the infinite weighted runs of arbitrary independent random variables. We notice that the normal approximation for the infinite runs was discussed first by Nourdin et al. in \cite{Nourdin2010}, Berry-Esseen bounds were obtained by Krokowski et al. in \cite{Krokowski2016}. However, only the case of Rademacher random variables was considered in these papers.


Let $X_1,X_2,...$ be a sequence of independent $\mathbb{R}$-valued random variables with means $\mu_i=E[X_i]$ and finite variances $\sigma_i^2=E[(X_i-\mu_i)^2].$ Assume that $x_0:=\sup\limits_{i\geq 1}E|X_i|^4<\infty.$ We consider the 2-run $F$ defined by
$$F:=\sum\limits_{i=1}^\infty a_{i,i+1}X_iX_{i+1},$$
where $ a_{i,i+1},i\geq 1$ are real numbers such that $E[F]$ and $Var(F)$ are finite.

Obviously, we have $E[F]=\sum\limits_{i=1}^\infty a_{i,i+1}\mu_i\mu_{i+1}.$ By the straightforward computations we obtain
$$\mathfrak{D}_1F=a_{1,2}(X_1-\mu_1)X_2,\,\,\,\,E[\mathfrak{D}_1F|\mathcal{F}_1]=a_{1,2}(X_1-\mu_1)\mu_2$$
and for $i\geq 2,$
$$\mathfrak{D}_iF=a_{i-1,i}X_{i-1}(X_i-\mu_i)+a_{i,i+1}(X_i-\mu_i)X_{i+1},$$
$$E[\mathfrak{D}_iF|\mathcal{F}_i]=a_{i-1,i}X_{i-1}(X_i-\mu_i)+a_{i,i+1}(X_i-\mu_i)\mu_{i+1}.$$
From now, we use the convention $a_{i,i+1}=0$ if $i\leq 0.$ Then, for all $i\geq 1,$ we have
$$E(E[\mathfrak{D}_iF|\mathcal{F}_i])^2=[a^2_{i-1,i}\sigma^2_{i-1}+(a_{i-1,i}\mu_{i-1}+a_{i,i+1}\mu_{i+1})^2]\sigma^2_i.$$
Recalling Corollary \ref{kkjj8}, we get
\begin{equation}\label{var11}
Var(F)=\sum\limits_{i=1}^\infty [a^2_{i-1,i}\sigma^2_{i-1}+(a_{i-1,i}\mu_{i-1}+a_{i,i+1}\mu_{i+1})^2]\sigma^2_i.
\end{equation}
\begin{prop}Consider the normalized random variable $G:=\frac{F-E[F]}{\sqrt{Var(F)}}.$ Then, it holds that
\begin{equation}\label{gfgfe}
d_W(G,N)\leq c(x_0)\left(\frac{(\sum\limits_{i=1}^\infty |a_{i,i+1}|^4)^{\frac{1}{2}}}{Var(F)}+\frac{\sum\limits_{i=1}^\infty |a_{i,i+1}|^3}{Var(F)^{3/2}}\right),
\end{equation}
\begin{equation}\label{kkddfnmqo9}
d_K(G,N)\leq c(x_0)\left(\frac{(\sum\limits_{i=1}^\infty |a_{i,i+1}|^4)^{\frac{1}{2}}}{Var(F)}+ \frac{(\sum\limits_{i=1}^\infty |a_{i,i+1}|^2)^{\frac{1}{2}}\sum\limits_{i=1}^\infty |a_{i,i+1}|^3}{Var(F)^2}+\frac{\sum\limits_{i=1}^\infty |a_{i,i+1}|^3}{Var(F)^{3/2}}\right),
\end{equation}
where $c(x_0)$ is a positive constant depending only on $x_0.$
\end{prop}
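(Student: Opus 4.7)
The plan is to apply Theorem \ref{ko67d3} (in the form (\ref{de2q1})) for the Wasserstein bound and Corollary \ref{oold1} (in the form (\ref{jjddm2})) for the Kolmogorov bound, with $F$ replaced by the centered variable $\widetilde F:=F-E[F]$ and $\sigma^2=Var(F)$. Since $\mathfrak{D}_i$ kills constants and is linear, $\mathfrak{D}_i\widetilde F=\mathfrak{D}_iF$, and the scalings are transparent: $\sqrt{Var(Z)}$ and $\sqrt{Var(\bar Z)}$ are divided by $Var(F)$, while $\mathcal{L}_3(F)$ is divided by $Var(F)^{3/2}$, matching the three terms in (\ref{gfgfe}) and (\ref{kkddfnmqo9}). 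Also, $|1-E[\sigma^{-2}\widetilde F^2]|=0$, so the term $|1-E[F^2]|$ drops out.

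The first block of estimates concerns pointwise moments of $\mathfrak{D}_iF$. The explicit formula given in the excerpt,
$$\mathfrak{D}_iF=a_{i-1,i}X_{i-1}(X_i-\mu_i)+a_{i,i+1}(X_i-\mu_i)X_{i+1},$$
is a sum of two independent-factor products. By independence of the factors, $\sup_iE|X_i|^4\leq x_0$, and Lyapunov's inequality for lower moments of $X_i-\mu_i$, I obtain
$$E|\mathfrak{D}_iF|^r\leq c(x_0)\bigl(|a_{i-1,i}|^r+|a_{i,i+1}|^r\bigr),\qquad r\in\{2,3,4\},$$
hence $\mathcal{L}_r(F)\leq c(x_0)\sum_i |a_{i,i+1}|^r$. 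This already handles the $\mathcal{L}_3$--type contribution in both bounds, and via the chain $(E|\mathfrak{D}_iF|^4)^{3/4}\leq c(x_0)(|a_{i-1,i}|^3+|a_{i,i+1}|^3)$ and $\sqrt{E|\mathfrak{D}_iF|^4}\leq c(x_0)(a_{i-1,i}^2+a_{i,i+1}^2)$ it also produces the cross term $(\sum a_{i,i+1}^2)^{1/2}\sum|a_{i,i+1}|^3$ appearing in (\ref{kkddfnmqo9}).

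The second block bounds $Var(Z)$ and $Var(\bar Z)$ by exploiting the local dependence of the 2-run. Writing $Z_i:=\mathfrak{D}_iF\,E[\mathfrak{D}_iF|\mathcal{F}_i]$, each $Z_i$ is a polynomial (degree $\leq 4$) in $(X_{i-1},X_i,X_{i+1})$, so for any fixed $j$ only the summands $Z_{j-1},Z_j,Z_{j+1}$ depend on $X_j$, giving $\mathfrak{D}_j Z=\mathfrak{D}_j(Z_{j-1}+Z_j+Z_{j+1})$. Using Proposition \ref{mcsk4}(vi), Cauchy--Schwarz, and the uniform fourth-moment bound on the $X_i$, I then control $E|\mathfrak{D}_jZ|^2$ by a $c(x_0)$-multiple of sums of $a^4$ with indices in a bounded neighbourhood of $j$. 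The Efron--Stein inequality (\ref{sopq1}) applied to $Z$ then telescopes these into
$$Var(Z)\leq c(x_0)\sum_{i=1}^\infty a_{i,i+1}^4.$$
A parallel analysis applies to $\bar Z=\sum_i\mathfrak{D}_i\!\bigl(\mathfrak{D}_iF\,(|E[\mathfrak{D}_iF|\mathcal{F}_i]|+E_i|E[\mathfrak{D}_iF|\mathcal{F}_i]|)\bigr)$: using $|E[\mathfrak{D}_iF|\mathcal{F}_i]|\leq |\mathfrak{D}_iF|+E_i|\mathfrak{D}_iF|$ to control the absolute values in $L^4$, each summand of $\bar Z$ still depends on $X_k$ only for $k$ in a bounded window around $i$, and the Efron--Stein bookkeeping yields $Var(\bar Z)\leq c(x_0)\sum_i a_{i,i+1}^4$. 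Substituting these variance bounds and the $\mathcal{L}_r$-bounds into (\ref{de2q1}) and (\ref{jjddm2}) produces (\ref{gfgfe}) and (\ref{kkddfnmqo9}).

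The main obstacle I anticipate is the careful combinatorics for $Var(\bar Z)$: the outer operator $\mathfrak{D}_i$ acts on a product involving $|\,\cdot\,|$ and the conditional expectation $E_i$, so the usual chain rule (Lemma \ref{lods33}) does not apply directly. I will therefore not try to differentiate the absolute value; instead I will bound $|\mathfrak{D}_j\bar Z|$ by a crude triangle inequality around each of the at most constantly many summands $\bar Z_i$ affected by $X_j$, and then use $L^2(L^4)$-type Cauchy--Schwarz together with $\sup_i E|X_i|^4\leq x_0$. This loses only absolute constants, which are absorbed into $c(x_0)$, and delivers the claimed order $\sum_i a_{i,i+1}^4$.
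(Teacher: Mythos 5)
Your proposal is correct and follows essentially the same route as the paper: apply (\ref{de2q1}) and (\ref{jjddm2}) to the centered, normalized variable, bound $E|\mathfrak{D}_iF|^r$ by $c(x_0)(|a_{i-1,i}|^r+|a_{i,i+1}|^r)$ via independence and the uniform fourth-moment bound, and control $Var(Z)$ and $Var(\bar Z)$ by writing them as sums of locally dependent terms $Z_i$, $\bar Z_i$ (each depending only on $X_{i-1},X_i,X_{i+1}$) and applying the Efron--Stein inequality (\ref{sopq1}) to the resulting three-term decomposition of $\mathfrak{D}_jZ$ and $\mathfrak{D}_j\bar Z$. The paper handles the absolute values in $\bar Z_i$ exactly as you anticipate, via the crude $L^2$--$L^4$ Cauchy--Schwarz bound $E|\bar Z_i|^2\leq 16\,E|\mathfrak{D}_iF|^4$ rather than any chain rule, so no further work is needed there.
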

\begin{proof}
\noindent{\it Part 1. Wasserstein distance.} Since $\mathfrak{D}_iG=\frac{\mathfrak{D}_iF}{\sqrt{Var(F)}},$ Theorem \ref{ko67d3} implies that
\begin{align}\label{ls01}
d_W(G,N)\leq \sqrt{\frac{2}{\pi}}\frac{\sqrt{Var(Z)}}{Var(F)}+\frac{2\mathcal{L}_3(F)}{Var(F)^{3/2}}.
\end{align}
For all $i\geq 1,$ we have
$$E|\mathfrak{D}_iF|^3\leq 4|a_{i-1,i}|^3E|X_{i-1}(X_i-\mu_i)|^3+4|a_{i,i+1}|^3E|(X_i-\mu_i)X_{i+1}|^3.$$
By Lyapunov's inequality
$$E|\mathfrak{D}_iF|^3\leq 4|a_{i-1,i}|^38x_0^{3/2}+4|a_{i,i+1}|^38x_0^{3/2}=32x_0^{3/2}(|a_{i-1,i}|^3+|a_{i,i+1}|^3),$$
and hence,
\begin{equation}\label{ls02}
\mathcal{L}_3(F)\leq 64x_0^{3/2}\sum\limits_{i=1}^\infty |a_{i,i+1}|^3.
\end{equation}
Similarly, we have
\begin{align}
E|\mathfrak{D}_iF|^4&\leq 8|a_{i-1,i}|^4E|X_{i-1}(X_i-\mu_i)|^4+8|a_{i,i+1}|^4E|(X_i-\mu_i)X_{i+1}|^4\nonumber\\
&\leq 8|a_{i-1,i}|^4 16x_0^2+8|a_{i,i+1}|^416x_0^2\nonumber\\
&=128x_0^2(|a_{i-1,i}|^4+|a_{i,i+1}|^4).\label{kdl2w}
\end{align}
Consequently,
\begin{equation}
\mathcal{L}_4(F)\leq256 x_0^2\sum\limits_{i=1}^\infty |a_{i,i+1}|^4.
\end{equation}
To bound $Var(Z),$ we write $Z=\sum\limits_{i=1}^{\infty}  Z_{i},$ where $Z_{i}:= \mathfrak{D}_iF E[\mathfrak{D}_iF|\mathcal{F}_i].$ Notice that
\begin{align*}
E|Z_{i}|^2&\leq E|\mathfrak{D}_{i}F|^4,\,\,i\geq 1.
\end{align*}
Because the random variable $X_i$ appears only in the terms $Z_{i-1},Z_{i}$ and $Z_{i+1},$ we have
$$\mathfrak{D}_iZ=\mathfrak{D}_iZ_{i-1}+\mathfrak{D}_iZ_{i}+\mathfrak{D}_iZ_{i+1},\,\,i\geq 1.$$
Hence, we can obtain the following
\begin{align*}
E|\mathfrak{D}_iZ|^2&\leq 3(E|\mathfrak{D}_iZ_{i-1}|^2+E|\mathfrak{D}_iZ_{i}|^2+E|\mathfrak{D}_iZ_{i+1}|^2)\\
&\leq 12(E|Z_{i-1}|^2+E|Z_{i}|^2+E|Z_{i+1}|^2)\\
&\leq 12( E|\mathfrak{D}_{i-1}F|^4+ E|\mathfrak{D}_{i}F|^4+ E|\mathfrak{D}_{i+1}F|^4),\,\,i\geq 1
\end{align*}
and the Efron-Stein inequality (\ref{sopq1})
\begin{equation}\label{ls03}
Var(Z)\leq \mathcal{L}_2(Z)\leq 36\sum\limits_{i=1}^\infty E|\mathfrak{D}_{i}F|^4=36\mathcal{L}_4(F)\leq 36\times 256 x_0^2\sum\limits_{i=1}^\infty |a_{i,i+1}|^4.
\end{equation}
Combining (\ref{ls01}), (\ref{ls02}) and (\ref{ls03}) yields
$$d_W(G,N)\leq 96 \sqrt{\frac{2}{\pi}}\,x_0\frac{\sqrt{\sum\limits_{i=1}^\infty |a_{i,i+1}|^4}}{Var(F)}+128x_0^{3/2}\frac{\sum\limits_{i=1}^\infty |a_{i,i+1}|^3}{Var(F)^{3/2}}.$$
Thus (\ref{gfgfe}) is verified.

\noindent{\it Part 2. Kolmogorov distance.}  Corollary \ref{oold1} gives us the following bound
\begin{align}
d_K(G,N)&\leq  \frac{\sqrt{Var(Z)}}{Var(F)}+ \frac{\sqrt{Var(\bar{Z})}}{Var(F)}\nonumber\\
&+ \frac{7(\sum\limits_{i=1}^\infty \sqrt{E|\mathfrak{D}_iF|^4})^{\frac{1}{2}}\sum\limits_{i=1}^\infty(E|\mathfrak{D}_iF|^4)^{\frac{3}{4}}}{2Var(F)^2}+\frac{\sqrt{2\pi}}{4Var(F)^{3/2}}
\mathcal{L}_3(F).\label{gft6w}
\end{align}
To bound $Var(\bar{Z}),$ we write $\bar{Z}=\sum\limits_{i=1}^{\infty}  \bar{Z}_{i},$ where $\bar{Z}_i:=\mathfrak{D}_i(\mathfrak{D}_iF(|E[\mathfrak{D}_iF|\mathcal{F}_i]|+E_i|E[\mathfrak{D}_iF|\mathcal{F}_i]|)).$ Notice that
\begin{align*}
E|\bar{Z}_i|^2&\leq 4E|\mathfrak{D}_iF(|E[\mathfrak{D}_iF|\mathcal{F}_i]|+E_i|E[\mathfrak{D}_iF|\mathcal{F}_i]|)|^2\\
&\leq 16 E|\mathfrak{D}_iF|^4,\,\,i\geq 1.
\end{align*}
Because the random variable $X_i$ appears only in the terms $\bar{Z}_{i-1},\bar{Z}_{i}$ and $\bar{Z}_{i+1},$ we have
$$\mathfrak{D}_i\bar{Z}=\mathfrak{D}_i\bar{Z}_{i-1}+\mathfrak{D}_i\bar{Z}_{i}+\mathfrak{D}_i\bar{Z}_{i+1},\,\,i\geq 1.$$
Hence,
$$E|\mathfrak{D}_i\bar{Z}|^2\leq 12\times 16( E|\mathfrak{D}_{i-1}F|^4+ E|\mathfrak{D}_{i}F|^4+ E|\mathfrak{D}_{i+1}F|^4)$$
and
\begin{equation}\label{mksg01}
Var(\bar{Z})\leq 36\times 16\sum\limits_{i=1}^\infty E|\mathfrak{D}_{i}F|^4=36\times 16\mathcal{L}_4(F)\leq36\times 16\times 256 x_0^2\sum\limits_{i=1}^\infty |a_{i,i+1}|^4.
\end{equation}
We now estimate the third addend in the right hand side of (\ref{gft6w}). From (\ref{kdl2w}), we have
$$\sqrt{E|\mathfrak{D}_iF|^4}\leq 8\sqrt{2}x_0(|a_{i-1,i}|^2+|a_{i,i+1}|^2),\,\,\,(E|\mathfrak{D}_iF|^4)^{\frac{3}{4}}\leq32\sqrt[4]{2}x^{\frac{3}{2}}_0(|a_{i-1,i}|^3+|a_{i,i+1}|^3).$$
So it holds that
\begin{align}\frac{7(\sum\limits_{i=1}^\infty \sqrt{E|\mathfrak{D}_iF|^4})^{\frac{1}{2}}\sum\limits_{i=1}^\infty(E|\mathfrak{D}_iF|^4)^{\frac{3}{4}}}{2Var(F)^2}\leq 7\times 128\sqrt{2}x_0^2 \frac{(\sum\limits_{i=1}^\infty |a_{i,i+1}|^2)^{\frac{1}{2}}\sum\limits_{i=1}^\infty |a_{i,i+1}|^3}{Var(F)^2}.\label{mksg02}
\end{align}
Finally, we insert the estimates (\ref{ls02}), (\ref{ls03}), (\ref{mksg01}) and (\ref{mksg02}) into (\ref{gft6w}) to get (\ref{kkddfnmqo9}).
\end{proof}
As expected, we obtain the rate of convergence $\frac{1}{\sqrt{n}}$ in the next corollary.
\begin{cor} Let $X_1,X_2,...$ be a sequence of i.i.d. random variables with mean $\mu$ and variance $\sigma^2\in(0,\infty).$ Assume that $E|X_1|^4<\infty,$ then
$$G_n=\frac{X_1X_2+...+X_nX_{n+1}-n\mu^2}{\sqrt{n\sigma^4+(4n+1)\sigma^2\mu^2}}$$
converges in distribution to a standard normal random variable $N$ as $n\to\infty.$ Moreover, we have
$$d_W(G_n,N)+d_K(G_n,N)\leq \frac{c}{\sqrt{n}},$$
where $c$ is a positive constant depending only on $\sigma$ and $E|X_1|^4.$
\end{cor}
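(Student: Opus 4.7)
The plan is to derive this corollary as a direct instantiation of the preceding proposition that yields (\ref{gfgfe}) and (\ref{kkddfnmqo9}), applied with the weights $a_{i,i+1}=1$ for $i\in\{1,\dots,n\}$ and $a_{i,i+1}=0$ otherwise, and with the i.i.d. hypothesis forcing $\mu_i=\mu$, $\sigma_i^2=\sigma^2$, and $x_0=E|X_1|^4<\infty$.

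First I would compute $E[F]=n\mu^2$, which is immediate from independence, and evaluate $Var(F)$ via formula (\ref{var11}). Only finitely many terms of the series are nonzero: the left boundary $i=1$ contributes $\mu^2\sigma^2$; each bulk index $i=2,\dots,n$ contributes $\sigma^4+4\mu^2\sigma^2$ because then both $a_{i-1,i}$ and $a_{i,i+1}$ equal $1$; and the right boundary $i=n+1$ contributes $\sigma^4+\mu^2\sigma^2$. Summing these yields a quantity of exact order $n$. The crucial feature I need is the clean lower bound
$$Var(F)\ge n\sigma^4,$$
which handles even the degenerate case $\mu=0$ and gives $Var(F)\ge c_1(\sigma)\,n$ with $c_1>0$ independent of $\mu$.

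Next I would evaluate the three $\ell^p$-sums of the weights appearing in the bounds, all of which equal $n$:
$$\sum_{i=1}^\infty |a_{i,i+1}|^p=n,\qquad p=2,3,4.$$
Substituting these together with $Var(F)\ge n\sigma^4$ into (\ref{gfgfe}) gives
$$d_W(G_n,N)\le c(x_0)\Bigl(\tfrac{\sqrt n}{n\sigma^4}+\tfrac{n}{(n\sigma^4)^{3/2}}\Bigr)=O(n^{-1/2}),$$
and the same substitutions in (\ref{kkddfnmqo9}) give
$$d_K(G_n,N)\le c(x_0)\Bigl(\tfrac{\sqrt n}{n\sigma^4}+\tfrac{\sqrt n\cdot n}{(n\sigma^4)^2}+\tfrac{n}{(n\sigma^4)^{3/2}}\Bigr)=O(n^{-1/2}).$$
Adding the two inequalities and absorbing the $\sigma$-factors into a single constant yields $d_W(G_n,N)+d_K(G_n,N)\le c/\sqrt n$ with $c$ depending only on $\sigma$ and $x_0=E|X_1|^4$, as required. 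The convergence in distribution follows at once, since either $d_W(G_n,N)\to 0$ (which metrizes weak convergence for laws with finite first moment) or $d_K(G_n,N)\to 0$ (which forces pointwise convergence of c.d.f.s).

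There is essentially no obstacle beyond bookkeeping; all the analytical difficulty is already absorbed into the preceding proposition. The only substantive point requiring a moment of care is the uniform lower bound $Var(F)\gtrsim n$, since the $4\mu^2\sigma^2$ contribution vanishes when $\mu=0$ and cannot be relied on; fortunately the bulk term $n\sigma^4$ is present independently of $\mu$ and secures the needed estimate.
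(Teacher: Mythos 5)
Your proof is correct and is exactly the route the paper intends: the corollary is given there without proof as an immediate specialization of the preceding proposition to the weights $a_{i,i+1}=\ind_{\{1\le i\le n\}}$, and your substitution of $\sum_i|a_{i,i+1}|^p=n$ together with the uniform lower bound $Var(F)\ge n\sigma^4$ (valid even when $\mu=0$) is the whole content. One small remark: your evaluation of (\ref{var11}) actually yields $Var(F)=n\sigma^4+(4n-2)\sigma^2\mu^2$, not the $n\sigma^4+(4n+1)\sigma^2\mu^2$ appearing in the paper's normalization of $G_n$; this discrepancy only rescales $G_n$ by a factor $1+O(1/n)$ and so does not affect the stated rate $c/\sqrt{n}$.
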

\begin{rem} We observe from (\ref{var11}) that
$$Var(F)\geq (\inf\limits_{i\geq 1}\sigma_i )^4 \sum\limits_{i=1}^\infty |a_{i,i+1}|^2.$$
Hence, (\ref{kkddfnmqo9}) recovers the bound obtained in Theorem 6.1 of \cite{Krokowski2016} for the infinite 2-run of Bernoulli sequences.
\end{rem}
\begin{rem}
Given an integer number $m\geq 2,$ we consider the infinite $m$-run $F^{(m)}$ defined by
$$F^{(m)}:=\sum\limits_{i=1}^\infty a_{i,...,i+m-1}X_i...X_{i+m-1},$$
where $ a_{i,...,i+m-1},i\geq 1$ are real numbers such that $E[F^{(m)}]$ and $Var(F^{(m)})$ are finite. Using the convention $a_{i,...,i+m-1}=0$ if $i\leq 0,$ we have
$$\mathfrak{D}_iF^{(m)}=a_{i-m+1,...,i}X_{i-m+1}...X_{i-1}(X_i-\mu_i)+...+a_{i,...,i+m-1}(X_i-\mu_i)X_{i+1}...X_{i+m-1}$$
for all $i\geq 1.$ It is easy to see that
$$E|\mathfrak{D}_iF^{(m)}|^3\leq c(m,x_0) (|a_{i-m+1,...,i}|^3+...+|a_{i,...,i+m-1}|^3),$$
$$E|\mathfrak{D}_iF^{(m)}|^4\leq c(m,x_0) (|a_{i-m+1,...,i}|^4+...+|a_{i,...,i+m-1}|^4),$$
where $c(m,x_0)$ is a positive constant depending only on $m$ and $x_0.$ Hence, for $G^{(m)}:=\frac{F^{(m)}-E[F^{(m)}]}{\sqrt{Var(F^{(m)})}},$ we can obtain the bounds that are similar to (\ref{gfgfe}) and (\ref{kkddfnmqo9}). For example, we have
$$d_W(G^{(m)},N)\leq c(m,x_0)\left(\frac{(\sum\limits_{i=1}^\infty |a_{i,...,i+m-1}|^4)^{\frac{1}{2}}}{Var(F^{(m)})}+\frac{\sum\limits_{i=1}^\infty |a_{i,...,i+m-1}|^3}{Var(F^{(m)})^{3/2}}\right).$$
\end{rem} 
Next, we use difference operators of second order to apply our bounds to general structures with local dependence. For $i,k\geq 1,$ we define
$$\mathfrak{D}_{k,i}F=\mathfrak{D}_{k}(\mathfrak{D}_{i}F)=F-E_i[F]-E_k[F]+E_k[E_i[F]].$$
Notice that $\mathfrak{D}_{i,i}F=\mathfrak{D}_{i}F$ and $\mathfrak{D}_{k,i}F=\mathfrak{D}_{i,k}F.$
\begin{prop}\label{kjgmd5} Let $F=F(X)$ be in $L^2(P)$ with mean zero and variance $\sigma^2>0.$ For each $k\geq 1,$ we define the set $A_k:=\{i:\mathfrak{D}_{k,i}F\neq 0\}$ and denote by $|A_k|$ the cardinality of $A_k.$ Then
\begin{align}
d_W(\sigma^{-1}F,N)&\leq\frac{ 2}{\sigma^2}\sqrt{\frac{2}{\pi}}\big(\sum\limits_{k=1}^{\infty}|A_k|\sum\limits_{i\in A_k}  E|\mathfrak{D}_iF|^4\big)^{\frac{1}{2}}+\frac{2}{\sigma^3}\mathcal{L}_3(F)\label{oq1nr}
\end{align}
and
\begin{align}
&d_K(\sigma^{-1}F,N)\nonumber\\
&\leq \frac{10}{\sigma^2}\big(\sum\limits_{k=1}^{\infty}|A_k|\sum\limits_{i\in A_k}  E|\mathfrak{D}_iF|^4\big)^{\frac{1}{2}}+\frac{7(\sum\limits_{i=1}^\infty \sqrt{E|\mathfrak{D}_iF|^4})^{\frac{1}{2}}\sum\limits_{i=1}^\infty(E|\mathfrak{D}_iF|^4)^{\frac{3}{4}}}{2\sigma^4}
+\frac{\sqrt{2\pi}}{4\sigma^3}\mathcal{L}_3(F).\label{oq1nr1}
\end{align}
\end{prop}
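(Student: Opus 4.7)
The plan is to derive both estimates from the abstract bounds (\ref{de2q1}) (for $d_W$) and (\ref{jjddm2}) (for $d_K$), exploiting the local dependence to control the variance terms $Var(Z)$ and $Var(\bar Z)$ that appear. Since $E[F^2]=\sigma^2$, the bias contribution $|1-E[(\sigma^{-1}F)^2]|$ in (\ref{de2q1}) vanishes after normalization, and the Lyapunov-type terms in (\ref{oq1nr}) and (\ref{oq1nr1}) are already identical to those appearing in (\ref{de2q1}) and (\ref{jjddm2}) modulo the appropriate $\sigma$-rescaling. Thus the entire task reduces to establishing
\begin{equation*}
\sqrt{Var(Z)}\leq 2\Big(\sum_{k=1}^{\infty}|A_k|\sum_{i\in A_k}E|\mathfrak{D}_iF|^4\Big)^{1/2},\qquad \sqrt{Var(\bar Z)}\leq 8\Big(\sum_{k=1}^{\infty}|A_k|\sum_{i\in A_k}E|\mathfrak{D}_iF|^4\Big)^{1/2},
\end{equation*}
which combined produce the factor $10$ in (\ref{oq1nr1}) and the factor $2$ in (\ref{oq1nr}).

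The key observation is a locality property of the summands $Z_i:=\mathfrak{D}_iF\,E[\mathfrak{D}_iF|\mathcal{F}_i]$ of $Z$ and $\bar Z_i:=\mathfrak{D}_i(\mathfrak{D}_iF(|E[\mathfrak{D}_iF|\mathcal{F}_i]|+E_i|E[\mathfrak{D}_iF|\mathcal{F}_i]|))$ of $\bar Z$. If $i\notin A_k$, then by definition $\mathfrak{D}_{k,i}F=\mathfrak{D}_iF-E_k[\mathfrak{D}_iF]=0$, which means $\mathfrak{D}_iF$ is $\sigma(X_j,j\neq k)$-measurable and hence does not depend on $X_k$. A short argument using independence of $X_k$ from the other coordinates then shows that $E[\mathfrak{D}_iF|\mathcal{F}_i]$, and in turn $E_i|E[\mathfrak{D}_iF|\mathcal{F}_i]|$, inherit this independence from $X_k$; consequently both $Z_i$ and $\bar Z_i$ are $X_k$-free, so $\mathfrak{D}_kZ_i=\mathfrak{D}_k\bar Z_i=0$ whenever $i\notin A_k$.

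The rest is a mechanical assembly. Starting from Efron-Stein (\ref{sopq1}), $Var(Z)\leq\sum_kE|\mathfrak{D}_kZ|^2$; for each $k$ the locality property lets me write $\mathfrak{D}_kZ=\sum_{i\in A_k}\mathfrak{D}_kZ_i$, so Cauchy-Schwarz gives $E|\mathfrak{D}_kZ|^2\leq|A_k|\sum_{i\in A_k}E|\mathfrak{D}_kZ_i|^2$; property (vi) of Proposition \ref{mcsk4} with $p=2$ yields $E|\mathfrak{D}_kZ_i|^2\leq4E|Z_i|^2$; and Cauchy-Schwarz together with the conditional Jensen inequality gives $E|Z_i|^2\leq\sqrt{E|\mathfrak{D}_iF|^4}\,\sqrt{E|E[\mathfrak{D}_iF|\mathcal{F}_i]|^4}\leq E|\mathfrak{D}_iF|^4$. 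Summing over $k$ produces the $\sqrt{Var(Z)}$ estimate with constant $2$. The argument for $\sqrt{Var(\bar Z)}$ is identical in structure, with the single extra step of using $(|a|+|b|)^2\leq 2(a^2+b^2)$ applied to $(|E[\mathfrak{D}_iF|\mathcal{F}_i]|+E_i|E[\mathfrak{D}_iF|\mathcal{F}_i]|)^2$, together with Jensen, to obtain $E|\bar Z_i|^2\leq 16\,E|\mathfrak{D}_iF|^4$; this produces the factor $8$ upon taking square roots.

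The main obstacle is articulating the locality propagation cleanly, i.e.\ verifying that neither the conditional expectation $E[\,\cdot\,|\mathcal{F}_i]$, nor the absolute value, nor the inner $E_i$-integration re-introduces a dependence on $X_k$ when the outer $\mathfrak{D}_iF$ has none. Once that is in place, all constants follow from bookkeeping via Cauchy-Schwarz, Jensen, and Proposition \ref{mcsk4} (vi), and the two displayed bounds of the proposition follow by inserting the variance estimates into (\ref{de2q1}) and (\ref{jjddm2}).
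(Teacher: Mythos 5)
Your proposal is correct and follows essentially the same route as the paper's proof: the locality observation that $\mathfrak{D}_{k,i}F=0$ forces $\mathfrak{D}_kZ_i=\mathfrak{D}_k\bar Z_i=0$, the decomposition $\mathfrak{D}_kZ=\sum_{i\in A_k}\mathfrak{D}_kZ_i$ followed by Cauchy--Schwarz, Proposition \ref{mcsk4}(vi) and Jensen, and the Efron--Stein inequality, all reproducing the paper's constants ($\sqrt{Var(Z)}\leq 2(\cdot)^{1/2}$, $\sqrt{Var(\bar Z)}\leq 8(\cdot)^{1/2}$, hence the factors $2$ and $10$). The only cosmetic difference is that you bound $E|Z_i|^2$ and $E|\bar Z_i|^2$ as intermediate quantities where the paper chains the inequalities directly.
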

\begin{proof} The fact $\mathfrak{D}_{k,i}F=0$ implies that $\mathfrak{D}_{i}F$ does not depend on $X_k,$ so does $\mathfrak{D}_iF E[\mathfrak{D}_iF|\mathcal{F}_i].$ Hence, $ \mathfrak{D}_k(\mathfrak{D}_iF E[\mathfrak{D}_iF|\mathcal{F}_i])=0$ $\,\,\forall\,i\notin A_k$ and we obtain
$$\mathfrak{D}_kZ=\sum\limits_{i\in A_k}  \mathfrak{D}_k(\mathfrak{D}_iF E[\mathfrak{D}_iF|\mathcal{F}_i]),\,\,k\geq 1.$$
Then by the Cauchy-Schwarz inequality
\begin{align*}
E|\mathfrak{D}_kZ|^2&\leq |A_k|\sum\limits_{i\in A_k}  E|\mathfrak{D}_k(\mathfrak{D}_iF E[\mathfrak{D}_iF|\mathcal{F}_i])|^2\\
&\leq 4|A_k|\sum\limits_{i\in A_k}  E|\mathfrak{D}_iF E[\mathfrak{D}_iF|\mathcal{F}_i]|^2\\
&\leq 4|A_k|\sum\limits_{i\in A_k}  E|\mathfrak{D}_iF|^4\,\,\forall\,k\geq 1,
\end{align*}
and by the Efron-Stein inequality (\ref{sopq1})
$$Var(Z)\leq 4\sum\limits_{k=1}^{\infty}|A_k|\sum\limits_{i\in A_k}  E|\mathfrak{D}_iF|^4.$$
Consequently, (\ref{oq1nr}) follows from (\ref{de2q1}).

Similarly, we have
$$\mathfrak{D}_k \bar{Z}=\sum\limits_{i\in A_k}\mathfrak{D}_i(\mathfrak{D}_iF(|E[\mathfrak{D}_iF|\mathcal{F}_i]|+E_i|E[\mathfrak{D}_iF|\mathcal{F}_i]|))\,\,\forall\,k\geq 1,$$
\begin{align*}
E|\mathfrak{D}_k\bar{Z}|^2&\leq |A_k|\sum\limits_{i\in A_k}  E|\mathfrak{D}_k(\mathfrak{D}_i(\mathfrak{D}_iF(|E[\mathfrak{D}_iF|\mathcal{F}_i]|+E_i|E[\mathfrak{D}_iF|\mathcal{F}_i]|)))|^2\\
&\leq 4|A_k|\sum\limits_{i\in A_k}  E|\mathfrak{D}_i(\mathfrak{D}_iF(|E[\mathfrak{D}_iF|\mathcal{F}_i]|+E_i|E[\mathfrak{D}_iF|\mathcal{F}_i]|))|^2\\
&\leq 16|A_k|\sum\limits_{i\in A_k}  E|\mathfrak{D}_iF(|E[\mathfrak{D}_iF|\mathcal{F}_i]|+E_i|E[\mathfrak{D}_iF|\mathcal{F}_i]|)|^2\\
&\leq 64|A_k|\sum\limits_{i\in A_k}  E|\mathfrak{D}_iF|^4\,\,\forall\,k\geq 1
\end{align*}
and
$$Var(\bar{Z})\leq 64\sum\limits_{k=1}^{\infty}|A_k|\sum\limits_{i\in A_k}  E|\mathfrak{D}_iF|^4.$$
Hence, (\ref{oq1nr1}) follows from (\ref{jjddm2}).
\end{proof}
\begin{rem}
When $F=F(X_1,...,X_n)$ is a function of only $n$ independent random variables, the bounds (\ref{oq1nr}) and (\ref{oq1nr1}) give us the following quantitative central limits theorems
\begin{equation}\label{2k3a}
d_W(\sigma^{-1}F,N)\leq\frac{ 2\delta M^{\frac{1}{2}} n^{\frac{1}{2}}}{\sigma^2}\sqrt{\frac{2}{\pi}}+\frac{2M^{\frac{3}{4}}n}{\sigma^3},
\end{equation}
\begin{equation}\label{2k3ab}
d_K(\sigma^{-1}F,N)\leq \frac{10\delta M^{\frac{1}{2}} n^{\frac{1}{2}}}{\sigma^2}+\frac{7Mn^{\frac{3}{2}}}{2\sigma^4}
+\frac{\sqrt{2\pi}M^{\frac{3}{4}}n}{4\sigma^3},
\end{equation}
provided that $\delta:=\max\limits_{k}|A_k|$ and $M:=\max\limits_{i} E|\mathfrak{D}_iF|^4$ are of finite values.
\end{rem}
\begin{rem} It is interesting to mention here Corollary 2.4 of \cite{Rey2017}. If $X_1,...,X_n$ are i.i.d. random variables with common distribution $\mu$ and $F:\mathcal{X}^n\to \mathbb{R}$ is a symmetric mapping such that $E[F(X)^2]<\infty,$ then
$$Var(F(X))\geq n\int_{\mathcal{X}}(E[F(X)-F(x,X_2,...,X_n)])^2\mu(dx).$$
Thus, in this situation, the bounds (\ref{2k3a}) and  (\ref{2k3ab}) provide us the rate of convergence $\frac{1}{\sqrt{n}}.$
\end{rem}
Another fundamental example of structures with local dependence is the $m$-scans processes. Proposition \ref{kjgmd5} gives us the following corollary.
\begin{cor}\label{yydljq1} Let $X_1,X_2,...$ be a sequence of independent random variables with $R_i=\sum\limits_{k=0}^{m-1}X_{i+k},\,i\geq 1$ denoting their $m$-scans process. We consider the random variable
$$F=\sum\limits_{i=1}^{\infty} f_i(R_i),$$
where $f_i:\mathcal{X}\to \mathbb{R},i\geq 1$ are measurable functions such that $E[f_i(R_i)]=0$ and $E|f_i(R_i)|^4<\infty.$ Suppose that $\sigma^2=Var(F)\in(0,\infty),$ then
\begin{equation}\label{oq1nr9}
d_W(\sigma^{-1}F,N)\leq cm^3\left(\frac{\big(\sum\limits_{k=1}^{\infty}E|f_i(R_i)|^4\big)^{\frac{1}{2}}}{\sigma^2}
+\frac{\sum\limits_{k=1}^{\infty}E|f_i(R_i)|^3}{\sigma^3}\right),
\end{equation}where $c$ is an absolute constant.
\end{cor}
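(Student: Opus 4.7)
The plan is to apply Proposition \ref{kjgmd5} with $F = \sum_{i=1}^\infty f_i(R_i)$, so everything reduces to three pieces: controlling $|A_k|$, the moments $E|\mathfrak{D}_i F|^p$, and then combining these in the bound (\ref{oq1nr}). First I would note the locality structure: since $R_j$ depends on $X_i$ only when $i \in \{j, j+1, \ldots, j+m-1\}$, we have $\mathfrak{D}_i f_j(R_j) = 0$ unless $j \in \{i-m+1, \ldots, i\}$, so at most $m$ summands contribute to $\mathfrak{D}_i F$. Likewise $\mathfrak{D}_{k,i} f_j(R_j) = 0$ unless both $k$ and $i$ lie in $\{j, \ldots, j+m-1\}$, which forces $|k - i| \leq m-1$; consequently $|A_k| \leq 2m-1$ for every $k$.

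Next I would bound the moments. Writing $\mathfrak{D}_i F = \sum_{j=i-m+1}^{i} \mathfrak{D}_i f_j(R_j)$ and applying the power-mean inequality $|a_1 + \cdots + a_m|^p \leq m^{p-1}\sum |a_k|^p$ together with Proposition \ref{mcsk4}, $(vi)$, I would obtain
\begin{align*}
E|\mathfrak{D}_i F|^3 &\leq 8 m^2 \sum_{j=i-m+1}^{i} E|f_j(R_j)|^3, \\
E|\mathfrak{D}_i F|^4 &\leq 16 m^3 \sum_{j=i-m+1}^{i} E|f_j(R_j)|^4.
\end{align*}
Summing over $i$ and noting that each index $j$ appears in at most $m$ of the inner sums (in fact in exactly the set $\{j, j+1, \ldots, j+m-1\}$) gives
$$\mathcal{L}_3(F) \leq 8 m^3 \sum_{j=1}^\infty E|f_j(R_j)|^3, \qquad \sum_{i=1}^\infty E|\mathfrak{D}_i F|^4 \leq 16 m^4 \sum_{j=1}^\infty E|f_j(R_j)|^4.$$

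Now I would handle the double sum appearing in (\ref{oq1nr}). Swapping the order of summation,
$$\sum_{k=1}^\infty |A_k| \sum_{i \in A_k} E|\mathfrak{D}_i F|^4 \leq (2m-1) \sum_{i=1}^\infty \bigl|\{k : i \in A_k\}\bigr| \, E|\mathfrak{D}_i F|^4 \leq (2m-1)^2 \sum_{i=1}^\infty E|\mathfrak{D}_i F|^4,$$
and inserting the bound just derived produces an upper bound of order $m^6 \sum_j E|f_j(R_j)|^4$. Taking square roots yields $c m^3 (\sum_j E|f_j(R_j)|^4)^{1/2}$, and the Lyapunov piece contributes $c m^3 \sum_j E|f_j(R_j)|^3$. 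Plugging these into (\ref{oq1nr}) gives exactly (\ref{oq1nr9}).

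The main obstacle is purely combinatorial bookkeeping: one must track how many times each $E|f_j(R_j)|^p$ is counted in the nested sums $\sum_k \sum_{i \in A_k} \sum_{j : |j - i| \leq m-1}$ and verify that the cumulative multiplicity is at most of order $m^3$, so that after taking the square root the overall exponent of $m$ is indeed $3$ and not larger. Once this counting is done correctly, no further analytic subtlety is required beyond the tools already established in Proposition \ref{kjgmd5} and Proposition \ref{mcsk4}.
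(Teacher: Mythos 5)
Your proposal is correct and follows essentially the same route as the paper: identify $\mathfrak{D}_iF=\sum_{j=(i-m+1)\vee 1}^{i}\mathfrak{D}_i f_j(R_j)$, bound $|A_k|$ by $O(m)$, and feed the resulting moment estimates into the bound (\ref{oq1nr}) of Proposition \ref{kjgmd5}; the paper simply states that the corollary ``follows directly'' after these identifications, whereas you carry out the bookkeeping explicitly. In fact your determination $A_k\subseteq\{k-m+1,\dots,k+m-1\}$, hence $|A_k|\leq 2m-1$, is more careful than the paper's asserted $A_k=\{k,\dots,k+m-1\}$ (which omits the indices $i<k$ with $i\geq k-m+1$), and your multiplicity count confirming the overall factor $m^3$ is exactly what is needed.
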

\begin{proof} In view of Proposition \ref{kjgmd5}, we have
$$\mathfrak{D}_iF=\sum\limits_{j=(i-m+1)\vee 1}^{i}\mathfrak{D}_i f_j(R_j)\,\,\forall\,\,i\geq 1,$$
$$\text{$A_k=\{1,...,k\}$ for $k\leq m-1$ and $A_k=\{k,...,k+m-1\}$ for $k\geq m$}.$$
Hence, (\ref{oq1nr9}) follows directly from (\ref{oq1nr}).
\end{proof}
\begin{exam}(Exceedances of the $m$-scans process) Let $X_1,X_2,...$ be a sequence of i.i.d. $\mathbb{R}$-valued random variables. For
$a \in \mathbb{R},$ the random variable $W=\sum\limits_{i=1}^{n}\ind_{\{R_i> a\}}$ counts the number of exceedances of $a$ by $\{R_i:1\leq i\leq n\}.$ The study of the asymptotics of $W$ when $n\to \infty$ is motivated by its applicability to the evaluation of the significance of observed inhomogeneities
in the distribution of markers along the length of long DNA sequences. In \cite{Dembo1996}, Dembo and Rinott obtained a Berry-Esseen bound of the best possible order. Here we are able to obtain the same rate of convergence for Wasserstein distance.

Put $p:=P(R_1>a).$ From \cite{Dembo1996} we have $E[W]=np$ and $\sigma^2:=Var(W)\geq np(1-p).$ Applying Corollary \ref{yydljq1}  to $F:=W-np$ yields
$$d_W(\sigma^{-1}F,N)\leq \frac{cm^3}{\sqrt{np(1-p)}}.$$
Notice that $f_i(r)=\ind_{\{r> a\}}-p$ for all $i=1,...,n.$ Hence, $E|f_i(R_i)|^4=(1-p)^4p+p^4(1-p)\leq p(1-p)$ and $E|f_i(R_i)|^3=(1-p)^3p+p^3(1-p)\leq p(1-p).$
\end{exam}

\section{Chen-Stein's method for Poisson approximation}
Let $\text{Pn}(\theta)$ be a Poisson random variable with parameter $\theta>0$ and $\mathbb{N}=\{0,1,2,...\}.$ In this section, we investigate Poisson approximation in Wasserstein and total variation distances for $\mathbb{N}$-valued random variables $F=F(X).$ 

Given a function $f:\mathbb{N}\to \mathbb{R}$ we define the operators
$$\Delta f(a):=f(a+1)-f(a),$$
$$\Delta^2 f(a):=\Delta(\Delta f(a))$$
and write $\|f\|_\infty:=\sup\limits_{a\in \mathbb{N}}|f(a)|.$ The key of this section is the following chain rule.
\begin{lem}\label{ppo1jk}Let $f:\mathbb{N}\to \mathbb{R}$ be a measurable function. For any $\mathbb{N}$-valued random variable $F=F(X)\in L^2(P),$ we have
\begin{align*}
\mathfrak{D}_i f(F)&=\Delta f(F) \mathfrak{D}_iF+R_{i,f},\,\,i\geq 1,
\end{align*}
where the remainder term $R_{i,f}$ satisfies the bound
$$|R_{i,f}|\leq \frac{\|\Delta^2f\|_\infty}{2}[2(\mathfrak{d}_iF)^2+\mathfrak{D}_iF],\,\,i\geq 1.$$
\end{lem}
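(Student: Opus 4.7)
The plan is to mimic the proof of Lemma 3.3, replacing the continuous Taylor expansion by a discrete one, since both $F$ and $T_iF$ take values in $\mathbb{N}$. Starting from
$$\mathfrak{D}_i f(F)=f(F)-E_i[f(F)]=E'_i[f(F)-f(T_iF)],$$
I would expand $f(T_iF)-f(F)$ around the integer $F$ using the forward difference $\Delta$.

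The key algebraic step is the following discrete Taylor identity. For any integer $\epsilon$, writing $f(F+\epsilon)-f(F)=\sum$ of $\Delta f$ along a telescoping path and comparing each term to $\Delta f(F)$, one obtains
$$f(F+\epsilon)-f(F)=\Delta f(F)\,\epsilon+r(\epsilon),$$
with the remainder controlled by a second-order bound. A short case split on the sign of $\epsilon$, combined with $|\Delta f(j)-\Delta f(F)|\leq\|\Delta^2 f\|_\infty|j-F|$, yields in both cases
$$|r(\epsilon)|\leq \|\Delta^2 f\|_\infty\,\frac{D(D+1)}{2},\qquad D:=-\epsilon,$$
because $(-\epsilon)(-\epsilon-1)=D^2+D$ when $\epsilon\geq 0$ and $(-\epsilon)(-\epsilon+1)=D(D+1)$ when $\epsilon\leq 0$ (with $D=-\epsilon$). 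Note $D(D+1)\geq 0$ for every integer $D$, so the bound is genuinely nonnegative.

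Applying this with $\epsilon=T_iF-F$, so that $D=F-T_iF$, and multiplying by $-1$ gives
$$f(F)-f(T_iF)=\Delta f(F)(F-T_iF)+\widetilde R,\qquad |\widetilde R|\leq \|\Delta^2 f\|_\infty\,\frac{D(D+1)}{2}.$$
Taking $E'_i$ of both sides and pulling the $\mathcal F$-measurable factor $\Delta f(F)$ out of $E'_i$ produces
$$\mathfrak{D}_i f(F)=\Delta f(F)\,E'_i[F-T_iF]+R_{i,f},\qquad R_{i,f}:=E'_i[\widetilde R].$$
Since $E'_i[F-T_iF]=F-E_i[F]=\mathfrak{D}_iF$, the leading term is exactly as claimed.

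For the remainder, I would use $|R_{i,f}|\leq E'_i|\widetilde R|$ together with the identities $E'_i[(F-T_iF)^2]=2(\mathfrak{d}_iF)^2$ (definition of $\mathfrak{d}_i$) and $E'_i[F-T_iF]=\mathfrak{D}_iF$ to conclude
$$|R_{i,f}|\leq \|\Delta^2 f\|_\infty\,\frac{E'_i[D^2]+E'_i[D]}{2}=\frac{\|\Delta^2 f\|_\infty}{2}\bigl[2(\mathfrak{d}_iF)^2+\mathfrak{D}_iF\bigr],$$
which is the desired estimate. The main obstacle I anticipate is simply careful bookkeeping in the discrete Taylor expansion: getting the asymmetric remainder $D(D+1)/2$ (rather than, say, $D^2/2$) is what makes the final bound come out with the explicit linear term $\mathfrak{D}_iF$ in addition to $2(\mathfrak{d}_iF)^2$, and this asymmetry is exactly the discrete counterpart of the integer-valued nature of $F$.
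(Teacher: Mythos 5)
Your proof is correct and follows essentially the same route as the paper: the paper likewise writes $\mathfrak{D}_if(F)=E'_i[f(F)-f(T_iF)]$, expands to first order in $\Delta f(F)$, and bounds the remainder by $\frac{\|\Delta^2 f\|_\infty}{2}E'_i[(T_iF-F)(T_iF-F-1)]=\frac{\|\Delta^2 f\|_\infty}{2}[2(\mathfrak{d}_iF)^2+\mathfrak{D}_iF]$, the only difference being that it cites the proof of Theorem 3.1 in Peccati's note for the discrete second-order Taylor inequality where you derive it by telescoping. The one blemish is the parenthetical ``$(-\epsilon)(-\epsilon-1)=D^2+D$'', which should read $\epsilon(\epsilon-1)=(-D)(-D-1)=D(D+1)$; your stated conclusion $|r(\epsilon)|\leq\|\Delta^2 f\|_\infty D(D+1)/2$ is nevertheless correct in both sign cases, so this is only a typo.
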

\begin{proof}We have
\begin{align*}
\mathfrak{D}_if(F) &=E'_i[f(F)-f(T_iF)]\\
&=\Delta f(F) \mathfrak{D}_iF-E'_i[f(T_iF)-f(F)-\Delta f(F)(T_iF-F)]\\
&=:\Delta f(F) \mathfrak{D}_iF+R_{i,f},\,\,i\geq 1.
\end{align*}
It is known from the proof of Theorem 3.1 in \cite{Peccati2012} that
\begin{align*}
|f(k)-f(a)-\Delta f(a)(k-a)|&\leq \frac{\|\Delta^2f\|_\infty}{2}|(k-a)(k-a-1)|\\
&=\frac{\|\Delta^2f\|_\infty}{2}(k-a)(k-a-1)\,\,\,\forall\,\,k,a\in \mathbb{N}.
\end{align*}
Applying the above inequality we obtain the following estimate for $R_{i,f}$
\begin{align*}
|R_{i,f}|&=|E'_i[f(T_iF)-f(F)-\Delta f(F)(T_iF-F)]|\\
&\leq \frac{\|\Delta^2f\|_\infty}{2}E'_i[(T_iF-F)(T_iF-F-1)]\\
&= \frac{\|\Delta^2f\|_\infty}{2}[2(\mathfrak{d}_iF)^2+\mathfrak{D}_iF],\,\,i\geq 1.
\end{align*}
So we complete the proof of Lemma.
\end{proof}
\subsection{Total variation distance}
The total variation distance between the law of $F$ and Poisson law $\text{Pn}(\theta)$ is defined by
$$d_{TV}(F,\text{Pn}(\theta))=\sup\limits_{A\subseteq \mathbb{N}}|P(F\in A)-P(\text{Pn}(\theta)\in A)|.$$
For each $A\subseteq \mathbb{N},$ we consider the Chen-Stein equation
\begin{equation}\label{lvmlw2}
\theta f(k+1)-kf(k)=\ind_{\{k\in A\}}-P(\text{Pn}(\theta)\in A),\,\,\,k\in \mathbb{N},
\end{equation}
with $f(0)=0.$ It is known that the equation (\ref{lvmlw2}) admits a unique solution, denoted by $f_A(k).$ This solution satisfies the following estimates (see, e.g. Lemma 1.1.1 and Remark 1.1.2 in \cite{Barbour1992})
\begin{equation}\label{fdmu}
\|f_A\|_\infty\leq 1\wedge \sqrt{\frac{2}{e\theta}},\,\,\,\,\|\Delta f_A\|_\infty\leq \frac{1-e^{-\theta}}{\theta},\,\,\,\,\|\Delta^2 f_A\|_\infty\leq \frac{2-2e^{-\theta}}{\theta}.
\end{equation}

\begin{thm}\label{prove01} Let $F=F(X)$ be an $\mathbb{N}$-valued random variable in $L^2(P)$ with mean $\mu$ and variance $\sigma^2,$  we have
\begin{align}
&d_{TV}(F,\text{Pn}(\theta))\nonumber\\
&\leq\left(1\wedge \sqrt{\frac{2}{e\theta}}\right) |\theta-\mu|+ \frac{1-e^{-\theta}}{\theta} \left(E|\theta- Z| + E\left[\sum\limits_{i=1}^{\infty} [2(\mathfrak{d}_iF)^2+\mathfrak{D}_iF]\,|E[\mathfrak{D}_iF|\mathcal{F}_i]|\right]\right)\label{plo1}\\
&\leq\left(1\wedge \sqrt{\frac{2}{e\theta}}\right) |\theta-\mu|+ \frac{1-e^{-\theta}}{\theta} \left(|\theta-\sigma^2|+\sqrt{Var(Z)}+2\mathcal{L}_3(F)+\mathcal{L}_2(F)\right),\label{plo2}
\end{align}
where $Z:=\sum\limits_{i=1}^{\infty}  \mathfrak{D}_iF E[\mathfrak{D}_iF|\mathcal{F}_i].$ 
\end{thm}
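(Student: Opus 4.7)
My plan is to apply the Chen-Stein characterization of total variation distance together with the same Malliavin-type machinery used in Theorem \ref{ko67d3}, adapted to the difference operator $\Delta$ and the chain rule from Lemma \ref{ppo1jk}. Starting from
$$d_{TV}(F,\text{Pn}(\theta))\le\sup_{A\subseteq\mathbb{N}}\bigl|E[\theta f_A(F+1)-Ff_A(F)]\bigr|,$$
I would fix $f=f_A$ and try to recast the right-hand side entirely in terms of $\mathfrak{D}_i F$ and $\Delta f(F)$. The key algebraic step is to split $E[Ff(F)]=\mu E[f(F)]+Cov(F,f(F))$ and to write $E[\theta f(F+1)]=(\theta-\mu)E[f(F+1)]+\mu E[f(F+1)]$; combining these and using $E[f(F+1)]-E[f(F)]=E[\Delta f(F)]$ gives
$$E[\theta f(F+1)-Ff(F)]=(\theta-\mu)E[f(F+1)]+\mu E[\Delta f(F)]-Cov(F,f(F)).$$

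Next I would apply Theorem \ref{lods3} in the symmetric form (via Proposition \ref{mcsk4}(iii)) to obtain $Cov(F,f(F))=E\!\left[\sum_i \mathfrak{D}_i f(F)\,E[\mathfrak{D}_iF|\mathcal{F}_i]\right]$, and then expand $\mathfrak{D}_i f(F)=\Delta f(F)\mathfrak{D}_iF+R_{i,f}$ using Lemma \ref{ppo1jk}, so that the main term becomes $E[\Delta f(F)Z]$ with $Z=\sum_i\mathfrak{D}_iF\,E[\mathfrak{D}_iF|\mathcal{F}_i]$. Substituting and using the cosmetic identity $(\theta-\mu)E[f(F+1)]=(\theta-\mu)E[f(F)]+(\theta-\mu)E[\Delta f(F)]$ collapses the $\mu$-dependence of the $\Delta f(F)$-term and yields the clean decomposition
$$E[\theta f(F+1)-Ff(F)]=(\theta-\mu)E[f(F)]+E[\Delta f(F)(\theta-Z)]-E\!\left[\sum_{i=1}^\infty R_{i,f}\,E[\mathfrak{D}_iF|\mathcal{F}_i]\right].$$
Taking absolute values, bounding $|R_{i,f}|\le \tfrac{\|\Delta^2 f\|_\infty}{2}[2(\mathfrak{d}_iF)^2+\mathfrak{D}_iF]$ from Lemma \ref{ppo1jk}, and inserting the Stein-factor estimates (\ref{fdmu}) for $\|f_A\|_\infty$, $\|\Delta f_A\|_\infty$, $\|\Delta^2 f_A\|_\infty$ delivers (\ref{plo1}).

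For the refinement (\ref{plo2}) I would estimate $E|\theta-Z|\le|\theta-\sigma^2|+\sqrt{Var(Z)}$ using Corollary \ref{kkjj8} (which gives $E[Z]=\sigma^2$). For the remainder sum, I would exploit that $2(\mathfrak{d}_iF)^2+\mathfrak{D}_iF=E'_i[(T_iF-F)(T_iF-F-1)]\ge 0$ when $F$ is $\mathbb{N}$-valued, so that $2(\mathfrak{d}_iF)^2+\mathfrak{D}_iF\le 2(\mathfrak{d}_iF)^2+|\mathfrak{D}_iF|$. The first piece is already controlled by (\ref{ode3wy}), yielding $2\mathcal{L}_3(F)$, while Cauchy--Schwarz plus conditional Jensen on the second piece gives $\sum_i E[|\mathfrak{D}_iF|\,|E[\mathfrak{D}_iF|\mathcal{F}_i]|]\le\mathcal{L}_2(F)$.

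I expect the main obstacle to be the bookkeeping that replaces $\mu-Z$ by $\theta-Z$ in the middle term --- the temptation after writing $E[Ff(F)]=\mu E[f(F)]+Cov(F,f(F))$ is to produce $E[\Delta f(F)(\mu-Z)]$, which would leave an awkward residual. Finding the correct way to group $(\theta-\mu)E[f(F+1)]$ so the $\Delta f(F)$-piece absorbs $\mu$ and leaves precisely $\theta-Z$ is the one nontrivial manipulation; after that, both (\ref{plo1}) and (\ref{plo2}) follow by essentially the same routine estimates used already in Theorem \ref{ko67d3}.
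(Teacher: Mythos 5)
Your proposal is correct and follows essentially the same route as the paper: both start from the Chen--Stein equation, isolate the term $E[(\theta-\mu)f_A(F)]$, apply the covariance formula of Theorem \ref{lods3} together with the chain rule of Lemma \ref{ppo1jk} to reduce the main term to $E[\Delta f_A(F)(\theta-Z)]$, and then insert the Stein-factor bounds (\ref{fdmu}); your passage from (\ref{plo1}) to (\ref{plo2}) via $E[Z]=\sigma^2$, the estimate (\ref{ode3wy}), and Cauchy--Schwarz is exactly what the paper leaves implicit. The only difference is cosmetic bookkeeping in how the $\mu$-terms are regrouped before they cancel.
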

\begin{proof}Let $f_A$ be the solution of the equation (\ref{lvmlw2}), we have
\begin{align}
P(F\in A)&-P(\text{Pn}(\theta)\in A)=E[\theta f_A(F+1)]-E[Ff_A(F)]\nonumber\\
&=E[\theta \Delta f_A(F)]-E\left[(F-\mu)f_A(F)\right]+E[(\theta-\mu)f_A(F)].\label{uijd4}
\end{align}
Thanks to Theorem \ref{lods3} and Lemma \ref{ppo1jk} we have
\begin{align}
E\left[(F-\mu)f_A(F)\right]&=E\left[\sum\limits_{i=1}^{\infty} \mathfrak{D}_i f_A(F) E[\mathfrak{D}_iF|\mathcal{F}_i]\right]\nonumber\\
&=E[\Delta f(F)Z]+E\left[\sum\limits_{i=1}^{\infty}R_{i,f} E[\mathfrak{D}_iF|\mathcal{F}_i]\right].\label{uijd4b}
\end{align}
Inserting the relation (\ref{uijd4b}) into (\ref{uijd4}) yields
\begin{align*}
P(F\in A)&-P(\text{Pn}(\theta)\in A)\\
&=E[(\theta-Z) \Delta f_A(F)]-E\left[\sum\limits_{i=1}^{\infty}R_{i,f_A} E[\mathfrak{D}_iF|\mathcal{F}_i]\right]+E[(\theta-\mu)f_A(F)].
\end{align*}
We therefore obtain, for all $A\subseteq \mathbb{N},$
\begin{align*}
|P(F\in A)-P(\text{Pn}(\theta)\in A)|&\leq \| f_A\|_\infty |\theta-\mu|+ \|\Delta f_A\|_\infty E|\theta- Z| \\
&+\frac{\|\Delta^2f_A\|_\infty}{2}E\left[\sum\limits_{i=1}^{\infty} [2(\mathfrak{d}_iF)^2+\mathfrak{D}_iF]\,|E[\mathfrak{D}_iF|\mathcal{F}_i]|\right],
\end{align*}
which, together with the estimates (\ref{fdmu}), gives us the bound (\ref{plo1}). So we can finish the proof because (\ref{plo2}) follows directly from (\ref{plo1}).

\end{proof}
\subsection{Wasserstein distance}
Given a function $h\in Lip(1):=\{h:\mathbb{N}\to \mathbb{R}:|h(x)-h(y)|\leq |x-y|\},$ we consider the Chen-Stein equation
\begin{equation}\label{dfqmj4}
\theta f(k+1)-kf(k)=h(k)-E[h(\text{Pn}(\theta))],\,\,k\in \mathbb{N},
\end{equation}
with $f(0)=f(1).$ It is known from Theorem 1.1 in \cite{Barbour2006} that the unique solution $f_h(k)$ of the equation (\ref{dfqmj4}) satisfies
\begin{equation}\label{ppjd3}
\|f_h\|_\infty= 1,\,\,\,\,\|\Delta f_h\|_\infty\leq 1\wedge \frac{8}{3\sqrt{2e\,\theta}},\,\,\,\,\|\Delta^2 f_h\|_\infty\leq \frac{4}{3}\wedge \frac{2}{\theta}.
\end{equation}
Our last main result is the explicit bound on Wasserstein distance between the law of $F$ and Poisson law $\text{Pn}(\theta)$ defined by
$$d_W(F,\text{Pn}(\theta)):=\sup\limits_{h\in Lip(1)}|E[h(F)]-E[h(\text{Pn}(\theta))]|.$$
\begin{thm}\label{prove02} Let $F=F(X)$ be an $\mathbb{N}$-valued random variable in $L^2(P)$ with mean $\mu$ and variance $\sigma^2,$   we have
\begin{align}
&d_{W}(F,\text{Pn}(\theta))\nonumber\\
&\leq  |\theta-\mu|
+ \left(1\wedge \frac{8}{3\sqrt{2e\,\theta}}\right)E|\theta- Z|+\left(\frac{2}{3}\wedge \frac{1}{\theta}\right)E\left[\sum\limits_{i=1}^{\infty} [2(\mathfrak{d}_iF)^2+\mathfrak{D}_iF]\,|E[\mathfrak{D}_iF|\mathcal{F}_i]|\right]\\
&\leq  |\theta-\mu|+ \left(1\wedge \frac{8}{3\sqrt{2e\,\theta}}\right)(|\theta-\sigma^2|+\sqrt{Var(Z)})+\left(\frac{2}{3}\wedge \frac{1}{\theta}\right)(2\mathcal{L}_3(F)+\mathcal{L}_2(F)),
\end{align}
where $Z:=\sum\limits_{i=1}^{\infty}  \mathfrak{D}_iF E[\mathfrak{D}_iF|\mathcal{F}_i].$
\end{thm}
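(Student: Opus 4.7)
The plan is to follow the template of the proof of Theorem \ref{prove01} essentially verbatim, swapping the Chen-Stein equation for indicators (\ref{lvmlw2}) with the equation (\ref{dfqmj4}) for Lipschitz test functions, and swapping the bounds (\ref{fdmu}) with the bounds (\ref{ppjd3}). So the only ingredients I need are already in place: the covariance formula of Theorem \ref{lods3}, the Poisson-type chain rule of Lemma \ref{ppo1jk}, and the regularity estimates (\ref{ppjd3}) for the solution $f_h$ of (\ref{dfqmj4}).

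Concretely, fix $h \in Lip(1)$ and let $f_h$ be the solution of (\ref{dfqmj4}). First I would write
\begin{align*}
E[h(F)] - E[h(\text{Pn}(\theta))] &= E[\theta f_h(F+1)] - E[F f_h(F)]\\
&= E[\theta \Delta f_h(F)] - E[(F-\mu) f_h(F)] + E[(\theta - \mu) f_h(F)],
\end{align*}
exactly as in (\ref{uijd4}). Next I would apply Theorem \ref{lods3} (with $G=F$ and the function $f_h$) followed by the chain rule of Lemma \ref{ppo1jk} to rewrite
\begin{equation*}
E[(F-\mu) f_h(F)] = E[\Delta f_h(F)\, Z] + E\!\left[\sum_{i=1}^\infty R_{i,f_h}\, E[\mathfrak{D}_i F \mid \mathcal{F}_i]\right],
\end{equation*}
where $Z = \sum_{i=1}^\infty \mathfrak{D}_iF\, E[\mathfrak{D}_iF\mid\mathcal{F}_i]$ and $|R_{i,f_h}| \le \tfrac{\|\Delta^2 f_h\|_\infty}{2}[2(\mathfrak{d}_iF)^2 + \mathfrak{D}_iF]$.

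Substituting back yields the identity
\begin{equation*}
E[h(F)] - E[h(\text{Pn}(\theta))] = E[(\theta - Z)\Delta f_h(F)] - E\!\left[\sum_{i=1}^\infty R_{i,f_h}\, E[\mathfrak{D}_i F \mid \mathcal{F}_i]\right] + E[(\theta - \mu) f_h(F)].
\end{equation*}
Taking absolute values, using $\|f_h\|_\infty = 1$, $\|\Delta f_h\|_\infty \le 1 \wedge \tfrac{8}{3\sqrt{2e\theta}}$ and $\|\Delta^2 f_h\|_\infty \le \tfrac{4}{3} \wedge \tfrac{2}{\theta}$ from (\ref{ppjd3}), and then taking $\sup_{h \in Lip(1)}$, produces the first bound of the theorem.

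For the second bound I would apply the triangle inequality exactly as in (\ref{ml2s7}): by Corollary \ref{kkjj8} we have $E[Z] = Var(F) = \sigma^2$, hence $E|\theta - Z| \le |\theta - \sigma^2| + \sqrt{Var(Z)}$. To bound the remainder sum I would use (\ref{ode3wy}) to estimate $2 E[(\mathfrak{d}_iF)^2 |E[\mathfrak{D}_iF\mid\mathcal{F}_i]|] \le 2 E|\mathfrak{D}_iF|^3$, and the Cauchy–Schwarz inequality $E[\mathfrak{D}_iF \cdot |E[\mathfrak{D}_iF\mid\mathcal{F}_i]|] \le E|\mathfrak{D}_iF|^2$ to handle the extra linear term in Lemma \ref{ppo1jk}; summing gives $2\mathcal{L}_3(F) + \mathcal{L}_2(F)$. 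There is no genuine obstacle here — the argument is a direct transplant of the proof of Theorem \ref{prove01} with the Lipschitz-class Stein bounds, and the only point requiring a bit of care is matching the new constants $1 \wedge \tfrac{8}{3\sqrt{2e\theta}}$ and $\tfrac{2}{3}\wedge\tfrac{1}{\theta}$ to the correct terms in the final inequality.
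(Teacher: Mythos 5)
Your proposal is correct and follows essentially the same route as the paper, which itself proves Theorem \ref{prove02} by repeating the argument of Theorem \ref{prove01} with the Lipschitz-class Stein solution $f_h$ and the estimates (\ref{ppjd3}) in place of (\ref{fdmu}); the constant $\frac{2}{3}\wedge\frac{1}{\theta}$ indeed arises as $\frac{1}{2}\|\Delta^2 f_h\|_\infty$. Your derivation of the second inequality (triangle inequality for $E|\theta-Z|$ via Corollary \ref{kkjj8}, then (\ref{ode3wy}) and Cauchy--Schwarz for the remainder sum) is exactly the step the paper leaves implicit, and it is carried out correctly.
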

\begin{proof}For each $h\in Lip(1),$ we use the same arguments as in the proof of Theorem \ref{prove01} to get
\begin{align*}
E[h(F)]&-E[h(\text{Pn}(\theta))]=E[\theta f_h(F+1)]-E[Ff_h(F)]\\
&=E[(\theta-Z) \Delta f_h(F)]-E\left[\sum\limits_{i=1}^{\infty}R_{i,f_h} E[\mathfrak{D}_iF|\mathcal{F}_i]\right]+E[(\theta-\mu)f_h(F)].
\end{align*}
We therefore obtain
\begin{align*}
|E[h(F)]-E[h(\text{Pn}(\theta))]|&\leq \| f_h\|_\infty |\theta-\mu|+ \|\Delta f_h\|_\infty E|\theta- Z| \\
&+\frac{\|\Delta^2f_h\|_\infty}{2}E\left[\sum\limits_{i=1}^{\infty} [2(\mathfrak{d}_iF)^2+\mathfrak{D}_iF]\,|E[\mathfrak{D}_iF|\mathcal{F}_i]|\right],\,\,h\in Lip(1).
\end{align*}
By the estimates (\ref{ppjd3})
\begin{align*}
|E[h(F)]-E[h(\text{Pn}(\theta))]|&\leq  |\theta-\mu|+ \left(1\wedge \frac{8}{3\sqrt{2e\,\theta}}\right)E|\theta- Z| \\
&+\left(\frac{2}{3}\wedge \frac{1}{\theta}\right)E\left[\sum\limits_{i=1}^{\infty} [2(\mathfrak{d}_iF)^2+\mathfrak{D}_iF]\,|E[\mathfrak{D}_iF|\mathcal{F}_i]|\right],\,\,h\in Lip(1).
\end{align*}
So we can finish the proof of Theorem.
\end{proof}
It can be seen that the bounds in Poisson approximation are very similar to those which one encounters in normal approximation. Hence, to ensure the conciseness of the paper, we do not consider further examples.
\noindent {\bf Acknowledgments.}  This research was funded by Viet Nam National Foundation for Science and Technology Development (NAFOSTED) under grant number 101.03-2015.15.

\end{document}